\def\eqref#1{equation~\ref{#1}}
\def\1{\bm{1}}
\def\vs{{\bm{s}}}
\DeclareMathAlphabet{\mathsfit}{\encodingdefault}{\sfdefault}{m}{sl}
\SetMathAlphabet{\mathsfit}{bold}{\encodingdefault}{\sfdefault}{bx}{n}
\def\gB{{\mathcal{B}}}
\def\gD{{\mathcal{D}}}
\def\gI{{\mathcal{I}}}
\def\gN{{\mathcal{N}}}
\def\gP{{\mathcal{P}}}
\def\gS{{\mathcal{S}}}
\def\gU{{\mathcal{U}}}
\def\sR{{\mathbb{R}}}
\def\sS{{\mathbb{S}}}
\newcommand{\E}{\mathbb{E}}
\newcommand{\R}{\mathbb{R}}
\DeclareMathOperator*{\argmin}{arg\,min}
\DeclareMathOperator*{\maximize}{maximize}
\DeclareMathOperator*{\minimize}{minimize}
\DeclareMathOperator*{\st}{subject\ to}
\DeclareMathOperator{\diag}{diag}
\DeclareMathOperator{\relint}{relint}
\DeclareMathOperator{\dom}{dom}
\newcommand{\norm}[1]{\left\lVert#1\right\rVert}
\newcommand{\eqdef}{=\vcentcolon}
\newcommand{\defeq}{\vcentcolon=}
\newcommand{\hquad}{\hspace{0.5em}}
\definecolor{blue}{HTML}{4363d8}
\definecolor{cyan}{HTML}{42d4f4}
\definecolor{yellow}{HTML}{ffe119}
\definecolor{red}{HTML}{e6194B}
\definecolor{orange}{HTML}{f58231}
\definecolor{green}{HTML}{3cb44b}
\definecolor{magenta}{HTML}{f032e6}
\definecolor{teal}{HTML}{469990}
\definecolor{pink}{HTML}{fabed4}
\newtheorem{theorem}{Theorem}[section]
\newtheorem{lemma}[theorem]{Lemma}
\title{Deep FlexQP: Accelerated Nonlinear\\Programming via Deep Unfolding}
\author{Alex Oshin\textsuperscript{*}\setcounter{footnote}{2}\thanks{Contact: \texttt{alexoshin@gatech.edu}.}\ , Rahul Vodeb Ghosh\textsuperscript{*}, Augustinos D. Saravanos\textsuperscript{\textdagger}\thanks{Work done while at Georgia Tech.}\ , Evangelos A. Theodorou\textsuperscript{*} \\
\textsuperscript{*}Georgia Institute of Technology \hspace{2em} \textsuperscript{\textdagger}Massachusetts Institute of Technology
}
\begin{document}

\maketitle

\begin{abstract}
We propose \textbf{FlexQP}, an always-feasible convex quadratic programming (QP) solver based on an $\ell_1$ elastic relaxation of the QP constraints.
If the original constraints are feasible, FlexQP provably recovers the optimal solution.
If the constraints are infeasible, FlexQP identifies a solution that minimizes the constraint violation while keeping the number of violated constraints sparse.
Such infeasibilities arise naturally in sequential quadratic programming (SQP) subproblems due to the linearization of the constraints.
We prove the convergence of FlexQP under mild coercivity assumptions, making it robust to both feasible and infeasible QPs.
We then apply deep unfolding to learn LSTM-based, dimension-agnostic feedback policies for the algorithm parameters, yielding an accelerated \textbf{Deep FlexQP}.
To preserve the exactness guarantees of the relaxation, we propose a normalized training loss that incorporates the Lagrange multipliers.
We additionally design a log-scaled loss for PAC-Bayes generalization bounds that yields substantially tighter performance certificates, which we use to construct an accelerated SQP solver with guaranteed QP subproblem performance.
Deep FlexQP outperforms state-of-the-art learned QP solvers on a suite of benchmarks including portfolio optimization, classification, and regression problems, and scales to dense QPs with over 10k variables and constraints via fine-tuning.
When deployed within SQP, our approach solves nonlinear trajectory optimization problems 4-16x faster than SQP with OSQP while substantially improving success rates.
On predictive safety filter problems, Deep FlexQP reduces safety violations by over 70\% and increases task completion by 43\% compared to existing methods.
\end{abstract}

\section{Introduction}
\label{sec:intro}

Nonlinear programming (NLP) is a key technique for both large-scale decision making, where difficulty arises due to the sheer number of variables and constraints, and real-time embedded systems, which need to solve many NLPs with similar structure quickly and robustly.
Within NLP, quadratic programming (QP) plays a fundamental role as many real-world problems in optimal control~\citep{anderson2007optimal}, portfolio optimization~\citep{markowitz1952portfolio,boyd2013performance,boyd2017multi}, and machine learning~\citep{huber1964robust,cortes1995support,tibshirani1996regression,candes2008enhancing} can be represented as QPs.
Furthermore, sequential quadratic programming (SQP) methods utilize QP as a submodule to solve much more complicated problems where the objective and constraints may be nonlinear and nonconvex, such as in nonlinear model predictive control (MPC)~\citep{diehl2009efficient,rawlings2020model}, state estimation~\citep{aravkin2017generalized}, and power grid optimization~\citep{montoya2019sequential}.
SQP itself can even be used as a subproblem for solving mixed integer NLPs~\citep{leyffer2001integrating} and large-scale partial differential equations~\citep{fang2023sqp}.

However, a common difficulty with SQP methods occurs when the linearization of the constraints results in an infeasible QP subproblem, and a large amount of research has focused on how to repair or avoid these infeasibilities, e.g., \citep{fletcher1985l1,izmailov2012stabilized}, among others.
A significant advantage of SNOPT~\citep{gill2005snopt}, one of the most well-known SQP-based methods, is in its infeasibility detection and reduction handling.
These considerations necessitate a fast yet robust QP solver that works under minimal assumptions on the problem parameters.

To this end, we propose \textbf{FlexQP}, a \textit{flexible} QP solver that is always-feasible, meaning that it can solve any QP regardless of the feasibility of the constraints.
Our method is based on an exact relaxation of the QP constraints: if the original QP is feasible, then FlexQP identifies the optimal solution.
On the other hand, if the original QP is infeasible, instead of erroring or failing to return a solution, FlexQP automatically identifies the infeasibilities while simultaneously finding a point that minimizes the constraint violation.
This allows FlexQP to be a robust QP solver in and of itself, but its power shines when used as a submodule in an SQP-type method, see \cref{fig:dubins_vehicle_timing_comparison}.

\begin{wrapfigure}{r}{0.5\textwidth}
\vspace{-1em}
\centering
\includegraphics[width=0.48\textwidth]{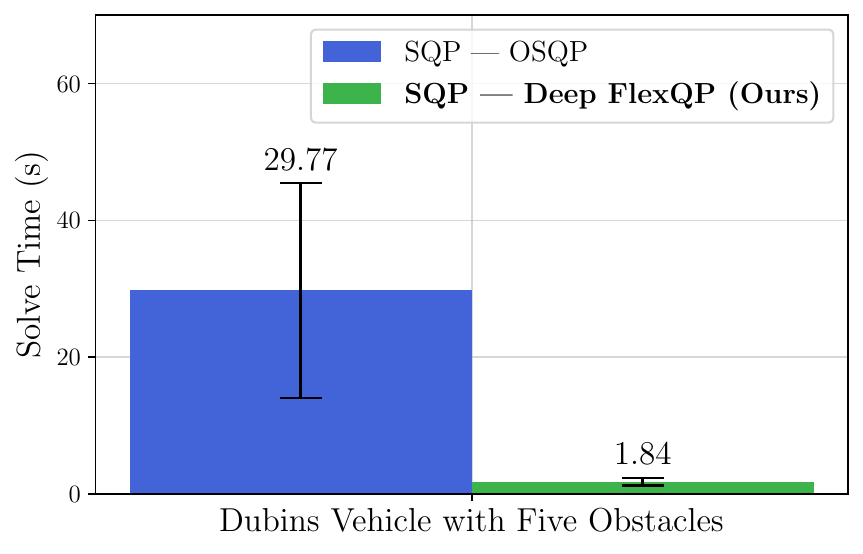}
\caption{
SQP with Deep FlexQP can solve highly constrained nonlinear optimizations over 16x faster than SQP with OSQP (averaged over 100 problems).}
\label{fig:dubins_vehicle_timing_comparison}
\end{wrapfigure}
Moreover, through the relaxation of the constraints, multiple hyperparameters are introduced that can be difficult to tune and have a non-intuitive effect on the optimization.
To address this shortcoming, we use deep unfolding~\citep{monga2021algorithm} to design lightweight feedback policies for the parameters based on actual problem data and solutions for QP problems of interest, leading to an accelerated version titled \textbf{Deep FlexQP}.
Learning the parameters in a data-driven fashion avoids the laborious process of tuning them by hand or designing heuristics for how they should be updated from one iteration to the next. Meanwhile, these data-driven rules have been shown to strongly outperform the hand-crafted ones, such as in the works by \citet{ichnowski2021accelerating} and \citet{saravanos2025deep}.

We thoroughly benchmark Deep FlexQP against traditional and learned QP optimizers on multiple QP problem classes including machine learning, portfolio optimization, and optimal control problems.
Moreover, we certify the performance of Deep FlexQP through probably approximately correct (PAC) Bayes generalization bounds, which provide a guarantee on the mean performance of the optimizer.
We propose a log-scaled training loss that better captures the performance of the optimizer when the residuals are very small.
Finally, we deploy Deep FlexQP to solve nonlinearly constrained trajectory optimization and predictive safety filter problems~\citep{wabersich2021predictive}.
Overall, Deep FlexQP can produce an order-of-magnitude speedup over OSQP~\citep{stellato2020osqp} when deployed as a subroutine in an SQP-based approach (\cref{fig:dubins_vehicle_timing_comparison}), while also robustly handling infeasibilities that may occur due to a poor linearization or an over-constrained problem.

\section{Motivation \& Related Work}
\label{sec:related_work}

SQP solves smooth nonlinear optimization problems of the form
\begin{equation} \label{eq:sqp_nonlinear_optimization}
    \minimize_x \hquad f(x), \quad \st \hquad g(x) \leq 0, \hquad h(x) = 0,
\end{equation}
where $f: \R^n \to \R$ twice-differentiable is the objective to be minimized and $g: \R^n \to \R^m$ and $h: \R^n \to \R^p$ twice-differentiable describe the inequality and equality constraints, respectively.
SQP solves \cref{eq:sqp_nonlinear_optimization} by solving a sequence of QP problems that are produced by linearizing the constraints and quadraticizing the Lagrangian\footnote{In practice, the Hessian of the Lagrangian $\nabla_x^2 \mathcal{L}$ may be indefinite and is often replaced by a positive definite approximation, e.g., via a quasi-Newton update.} $\mathcal{L}(x, y_I, y_E) \defeq f(x) + y_I^\top g(x) + y_E^\top h(x)$ around the current iterate $(x^k, y_I^k, y_E^k)$, where $y_I \in \R^m_+$ and $y_E \in \R^p$ are the dual variables for the inequality and equality constraints, respectively.
This results in QP subproblems of the form:
\begin{subequations} \label{eq:qp_subproblem}
\begin{align}
    \minimize_{dx} \quad   & \frac{1}{2} dx^\top \nabla_x^2 \mathcal{L}(x^k, y_I^k, y_E^k) \, dx + \nabla f(x^k)^\top dx, \\
    \st \quad & g(x^k) + \partial g(x^k) dx \leq 0, \hquad h(x^k) + \partial h(x^k) dx = 0. \label{eq:qp_subproblem:constraints}
\end{align}
\end{subequations}
Notably, the linearization of the constraints $g$ and $h$ may not produce a QP subproblem that is feasible, meaning that there may not exist any $dx$ that satisfies the linearized constraints \cref{eq:qp_subproblem:constraints}.
In this case, the SQP solver either terminates with a suboptimal point or a specialized routine needs to be run in order to reduce the infeasibility.
For example, when SNOPT encounters an infeasible subproblem, it enters \textit{elastic mode} and solves a new optimization where the constraints are relaxed using $\ell_1$ penalty functions~\citep{gill2005snopt}.
This is advantageous over other choices, such as an $\ell_2$ penalty, as the $\ell_1$ norm encourages sparsity in the constraint violation.
This means the optimizer can naturally identify the constraints that are the most difficult to satisfy.
In the context of mixed integer NLPs, infeasibilities are very likely to occur during the branch and bound process, so their fast and robust identification is crucial~\citep{gill2011sequential}.

Moreover, as the interest in data-driven optimization grows, we often wish to solve many optimization problems with similar structure repeatedly~\citep{amos2023tutorial}, and potentially in parallel or in a batched fashion, such as in the method by \citet{fang2023sqp}, which requires solving coupled systems of SQP problems in parallel. Needing to run specialized procedures in these cases is not scalable. Furthermore, identifying the best hyperparameters for each individual optimization problem is difficult and time consuming.
Deep unfolding is a learning-to-optimize approach~\citep{chen2022learning} that has roots in the signal and image processing domains~\citep{gregor2010ista,wang2015deep} and utilizes data-driven machine learning to reduce the cost of hyperparameter tuning and to accelerate convergence of model-based optimizers~\citep{monga2021algorithm}. It constitutes the state-of-the-art approach for sparse recovery~\citep{liu2019alista} and video reconstruction~\citep{deweerdt2024transformer}. In the context of NLP, deep unfolding has been recently applied to accelerate QPs. \citet{saravanos2025deep} use an analogy to closed-loop control and learn feedback policies for the parameters of a deep-unfolded variant of the operator splitting QP (OSQP) solver~\citep{stellato2020osqp}, which is a first-order method based on the alternating direction method of multipliers (ADMM)~\citep{boyd2011distributed}. Their method can achieve orders-of-magnitude improvement in wall-clock time compared to OSQP, and they also propose a decentralized version for quickly solving QPs with distributed structure. Their idea is similar in vein to that of \citet{ichnowski2021accelerating}, who use reinforcement learning to train a policy that outputs the optimal parameters for OSQP, with the goal of accelerating the optimizer. Another related approach learns to warm-start a Douglas-Rachford splitting QP solver, with the goal of improving convergence speed~\citep{sambharya2023end}.

Based on these considerations, in \cref{sec:FlexQP} we propose FlexQP, an always-feasible QP optimizer, and in \cref{sec:deep_unfolding} we apply deep unfolding, leading to Deep FlexQP.
A more comprehensive survey of related work is provided in \cref{appendix:extended_related_work}.

\section{FlexQP: An Always-Feasible Quadratic Programming Solver}
\label{sec:FlexQP}

Our proposed QP solver, FlexQP, transforms the QP constraints using an exact relaxation and then solves the resultant problem using an operator splitting inspired by OSQP~\citep{stellato2020osqp}.
We will assume the reader is familiar with ADMM; a good overview is provided by \citet{boyd2011distributed}.

\subsection{Quadratic Programming}
We are interested in solving convex QPs of the general form
\begin{equation}
    \minimize_x \hquad   \frac{1}{2} x^\top P x + q^\top x, \quad \st \hquad Gx \leq h, \hquad Ax = b, \label{eq:qp}
\end{equation}
where $x \in \sR^n$ is the decision variable. The objective is defined by the symmetric positive semidefinite quadratic cost matrix $P \in \sS_+^n$ and the linear cost vector $q \in \sR^n$. The inequality constraints are defined by the matrix $G \in \sR^{m \times n}$ and the vector $h \in \sR^m$. Similarly, the equality constraints are defined by the matrix $A \in \sR^{p \times n}$ and vector $b \in \sR^p$.

The optimality conditions for \cref{eq:qp} are given by:
\begin{equation} \label{eq:qp_optimality_conditions}
    Px + q + G^\top y_I + A^\top y_E = 0, \hquad Gx - h \leq 0, \hquad Ax - b = 0, \hquad y_I \odot (Gx - h) = 0, \hquad y_I \geq 0,
\end{equation}
where $y_I \in \R^m$ and $y_E \in \R^p$ are the dual variables for the inequality and equality constraints, respectively.
A tuple $(x^*, y_I^*, y_E^*)$ satisfying \cref{eq:qp_optimality_conditions} is a solution to \cref{eq:qp}.

Throughout this work, we will avoid making any assumptions on $G$ or $A$, meaning that the constraints may be redundant, and in the worst case, there may not exist a feasible $x$ for the optimization. This allows for a unified way to handle infeasibilities when optimizations of the form \cref{eq:qp} are embedded as a subproblem in SQP, as in \cref{eq:qp_subproblem}.

\subsection{Elastic Formulation}

We start by introducing slack variables $s \in \sR^m$, so \cref{eq:qp} can be expressed equivalently as
\begin{equation}
    \minimize_{x, s \geq 0} \hquad \frac{1}{2} x^\top P x + q^\top x, \quad \st \hquad Gx + s - h = 0, \hquad Ax - b = 0. \label{eq:standard_form_qp}
\end{equation}
This standard technique is the basis of many interior point algorithms~\citep{nocedal2006numerical}.
We then relax the set of equality constraints in \cref{eq:standard_form_qp} using $\ell_1$ penalty functions, yielding
\begin{equation}
    \minimize_{x, s \geq 0} \hquad \phi(x, s; \mu_I, \mu_E) \defeq \frac{1}{2} x^\top P x + q^\top x + \mu_I \norm{Gx + s - h}_1 + \mu_E \norm{Ax - b}_1, \label{eq:relaxed_qp}
\end{equation}
with elastic penalty parameters $\mu_I, \mu_E > 0$. This relaxation approach is known as \textit{elastic programming}~\citep{brown1975elastic}, and one of the most well-known SQP-based solvers, SNOPT, uses this technique in order to reduce the infeasibility of a QP subproblem~\citep{gill2005snopt}. This relaxation is also a fundamental step in the sequential $\ell_1$ quadratic programming method of \citet{fletcher1985l1}.
Notably, if \cref{eq:qp} has a feasible solution and the elastic penalty parameters are sufficiently large, then the solutions to \cref{eq:qp} and \cref{eq:relaxed_qp} are identical --- this is why the relaxation is exact.
On the other hand, if the original QP \cref{eq:qp} is infeasible, then solving \cref{eq:relaxed_qp} finds a point that minimizes the constraint violation~\citep{nocedal2006numerical}. This is formalized through the following theorem, which also describes what we mean by a sufficiently large penalty parameter.

\begin{theorem} \label{thm:minimizers}
    Let $(x^*, y_I^*, y_E^*)$ solve \cref{eq:qp}. Let $\mu_I^* = \norm{y_I^*}_\infty$ and $\mu_E^* = \norm{y_E^*}_\infty$. Then, for all $\mu_I \geq \mu_I^*$ and $\mu_E \geq \mu_E^*$, the minimizers of \cref{eq:qp} and \cref{eq:relaxed_qp} coincide.
\end{theorem}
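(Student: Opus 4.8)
The plan is to identify $\phi$ with the classical $\ell_1$ exact penalty function of \cref{eq:qp} and run an exact-penalty argument specialized to the convex setting. First I would eliminate the slack by partial minimization over $s$: for fixed $x$, minimizing each coordinate of $\mu_I\norm{Gx+s-h}_1$ over $s_j\ge 0$ is a one-line case split (take $s_j=h_j-(Gx)_j$ when the $j$-th inequality is slack and $s_j=0$ otherwise), giving $\min_{s\ge 0}\mu_I\norm{Gx+s-h}_1=\mu_I\sum_j\max\{(Gx)_j-h_j,\,0\}$. Thus minimizing $\phi$ over $(x,s)$ is equivalent to minimizing the reduced penalty $P_\mu(x)\defeq\tfrac12 x^\top Px+q^\top x+\mu_I\sum_j\max\{(Gx)_j-h_j,0\}+\mu_E\norm{Ax-b}_1$ over $x$, and the $x$-parts of the two minimizer sets agree. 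Since $P\in\sS_+^n$, both $f(x)\defeq\tfrac12 x^\top Px+q^\top x$ and $P_\mu$ are convex, so global minimizers are exactly the zeros of the subdifferential.

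For the forward inclusion (every solution of \cref{eq:qp} minimizes $P_\mu$) I would use the Lagrangian $L(x,y_I,y_E)\defeq f(x)+y_I^\top(Gx-h)+y_E^\top(Ax-b)$. Stationarity in \cref{eq:qp_optimality_conditions} gives $\nabla_x L(x^*,y_I^*,y_E^*)=0$, so convexity yields $L(x,y_I^*,y_E^*)\ge L(x^*,y_I^*,y_E^*)=f(x^*)$ for every $x$, the last equality by primal feasibility and complementary slackness. The key estimate is per-coordinate: for $u=Gx-h$ one has $-(y_I^*)_j u_j+\mu_I\max\{u_j,0\}\ge 0$ (using $0\le(y_I^*)_j\le\mu_I^*\le\mu_I$ when $u_j>0$, and $(y_I^*)_j\ge0,\ u_j\le0$ otherwise), and for $v=Ax-b$ one has $-(y_E^*)_k v_k+\mu_E|v_k|\ge(\mu_E-|(y_E^*)_k|)|v_k|\ge0$. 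Summing these over coordinates gives $P_\mu(x)\ge L(x,y_I^*,y_E^*)\ge f(x^*)=P_\mu(x^*)$, so $x^*$ is a global minimizer of $P_\mu$. Equivalently, one certifies $0\in\partial P_\mu(x^*)$ by exhibiting the subgradient weights $y_I^*/\mu_I\in[0,1]^m$ and $y_E^*/\mu_E\in[-1,1]^p$, whose admissibility is precisely the hypothesis $\mu_I\ge\mu_I^*$, $\mu_E\ge\mu_E^*$, with the inactive inequality weights forced to zero by complementary slackness.

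The reverse inclusion is where I expect the real work. For any minimizer $\bar x$ of $P_\mu$ the value equals $f(x^*)$, so the entire chain above is tight; this forces $\bar x$ to minimize $L(\cdot,y_I^*,y_E^*)$ and forces every per-coordinate penalty bound to be an equality. Translating these tight bounds into primal feasibility of $\bar x$ is immediate whenever the associated multiplier is strictly dominated (e.g.\ $|(y_E^*)_k|<\mu_E$ forces $(A\bar x-b)_k=0$), but it is delicate exactly at the boundary: if a dual coordinate equals $\mu$, the corresponding constraint need not be recovered, and the penalty can acquire spurious infeasible minimizers. I would therefore obtain exact coincidence of the two minimizer sets under the strict inequalities $\mu_I>\mu_I^*,\ \mu_E>\mu_E^*$, and under the stated non-strict hypothesis establish the still-sufficient conclusion that the optimal values coincide and that every solution of \cref{eq:qp} remains a solution of \cref{eq:relaxed_qp} (so solving the relaxation recovers a QP optimum whenever \cref{eq:qp} is feasible). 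Making the boundary case airtight — in particular showing that a feasible penalty minimizer always exists and coincides with the QP solution — is the main obstacle, which I would resolve either by strengthening to strict inequalities or by an argument exploiting the polyhedral structure of the solution set.
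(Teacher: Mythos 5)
Your proposal takes essentially the same route as the paper's proof in \cref{appendix:minimizers_proof}: eliminate the slacks to reduce \cref{eq:relaxed_qp} to the penalty form \cref{eq:relaxed_qp_alt}, prove that every solution of \cref{eq:qp} minimizes the penalty via the KKT-plus-convexity chain (your per-coordinate Lagrangian estimates are the paper's inequalities in different packaging), and then argue that a penalty minimizer must be feasible and hence optimal for \cref{eq:qp}. Your forward inclusion is correct as stated and valid under the non-strict hypothesis, exactly as in the paper.

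The genuinely valuable part of your proposal is the hedge you placed on the reverse inclusion, because it is not mere caution: the boundary case is a real defect in the paper's argument. The paper's Part 2 derives $\phi(\hat x;\mu_I,\mu_E) \ge \frac{1}{2}{x^*}^\top P x^* + q^\top x^*$ and declares a contradiction with \cref{eq:xhat_minimizes_penalty_function}, but the two inequalities together give only equality, not a contradiction; strictness is available exactly when every violated constraint has its multiplier $|y_i^*|$ strictly below the corresponding penalty parameter. At the boundary $\mu_I = \mu_I^*$ the set-coincidence claim is in fact false: take $n = m = 1$, $p = 0$, $P = 0$, $q = -1$, $G = 1$, $h = 0$, i.e.\ minimize $-x$ subject to $x \le 0$. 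Then $x^* = 0$, $y_I^* = 1$, so $\mu_I^* = 1$, and with $\mu_I = 1$ the penalty $-x + (x)_+$ is minimized by every $x \ge 0$, while the QP has the unique minimizer $x = 0$. So your resolution --- exact coincidence of minimizer sets under the strict inequalities $\mu_I > \mu_I^*$, $\mu_E > \mu_E^*$, and under the stated non-strict hypothesis only the weaker conclusions that the optimal values agree, that every QP solution minimizes the penalty, and that every feasible penalty minimizer solves the QP --- is the airtight version of the statement; the paper's theorem needs either this strictness or that rephrasing.
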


This theorem is a generalization of the one by \citet{han1979exact} that shows we can select a different penalty parameter for the inequality vs. equality constraints. The proof, provided in \cref{appendix:minimizers_proof}, relies on two simple facts: the optimality conditions of \cref{eq:qp} and the convexity of the objective. The proof also shows that it is possible to select \emph{vectors} of penalty parameters $\mu_I$ and $\mu_E$, as long as each $\mu_{I,i}$ and $\mu_{E,j}$ obeys the constraints $\mu_{I,{i}} \geq |y_{I,i}|$ or $\mu_{E,j} \geq |y_{E,j}|$, respectively.

\textbf{What happens when the penalty parameters $\boldsymbol{\mu}$ do not satisfy the conditions of \cref{thm:minimizers}?}
Using the interpretation of the Lagrange multiplier $y_i$ representing the cost of an individual constraint $i$ with associated penalty parameter $\mu_i > 0$, if $\mu_i \geq |y_i|$ then the penalty on violating constraint $i$ in \cref{eq:relaxed_qp} is large enough such that \cref{thm:minimizers} holds. On the other hand, if $\mu_i < |y_i|$, then this constraint $i$ is not being penalized strong enough, and so the solution to \cref{eq:relaxed_qp} will violate this constraint, with the amount of violation proportional to the difference between $\mu_i$ and $|y_i|$. We use this interpretation in \cref{sec:deep_unfolding} to design feedback policies that select the best penalty parameters as a function of the optimizer state and enforce the condition $\mu_i \geq |y_i|$ during learning using a supervised loss that includes the Lagrange multipliers (see also \cref{thm:FlexQP_solution} below).

\subsection{Operator Splitting \& ADMM}
The optimization in \cref{eq:relaxed_qp} can be simplified further to make the terms appearing in the $\ell_1$ penalty functions easier to handle. Introducing decision variables $z_I \in \sR^m$ and $z_E \in \sR^p$, we have
\begin{subequations} \label{eq:relaxed_qp_simplified}
\begin{align}
    \minimize_{x,s, z_I, z_E} \quad   & \frac{1}{2} x^\top P x + q^\top x + \mu_I \norm{z_I}_1 + \mu_E \norm{z_E}_1, \label{eq:relaxed_qp_simplified:obj} \\
    \st \quad & z_I = Gx + s - h, \hquad z_E = Ax - b, \hquad s \geq 0. \label{eq:relaxed_qp_simplified:constraints}
\end{align}
\end{subequations}
The variables $z_I$ and $z_E$ are equal to the constraint violation, so their optimal values can be viewed as a certificate of feasibility for \cref{eq:qp} if $z_I^* = z_E^* = 0$ and infeasibility if $z_I^* \neq 0$ or $z_E^* \neq 0$.
While it may seem tempting to apply ADMM to this formulation, the resultant updates will not have a closed-form solution no matter how the variable splitting is performed.
Inspired by OSQP~\citep{stellato2020osqp}, we perform a final transformation by introducing variables $\tilde{x}$, $\tilde{s}$, $\tilde{z}_I$, and $\tilde{z}_E$, yielding
\begin{subequations} \label{eq:FlexQP}
\begin{align}
    \minimize_{\tilde{\boldsymbol{x}}, \boldsymbol{x}} \quad   & \underbrace{\frac{1}{2} \tilde{x}^\top P \tilde{x} + q^\top \tilde{x} + \gI_I(\tilde{\boldsymbol{x}}) + \gI_E(\tilde{\boldsymbol{x}})}_{\eqdef \, f(\tilde{\boldsymbol{x}})} + \underbrace{\gI_s(\boldsymbol{x}) + \mu_I \norm{z_I}_1 + \mu_E \norm{z_E}_1}_{\eqdef \, g(\boldsymbol{x})}, \label{eq:FlexQP:obj} \\
    \st \quad & \tilde{\boldsymbol{x}}  = \boldsymbol{x},  \label{eq:FlexQP:constraints}
\end{align}
\end{subequations}
where $\tilde{\boldsymbol{x}} = (\tilde{x}, \tilde{s}, \tilde{z}_I, \tilde{z}_E)$ and $\boldsymbol{x} = (x, s, z_I, z_E)$ for notational simplicity. The indicator functions $\gI_I$, $\gI_E$, and $\gI_s$ are defined as
\begin{gather*}
    \gI_I(\boldsymbol{x}) = \begin{cases}
        0       & z_I = Gx + s - h, \\
        +\infty & \text{otherwise},
    \end{cases} \hquad
    \gI_E(\boldsymbol{x}) = \begin{cases}
        0       & z_E = Ax - b, \\
        +\infty & \text{otherwise},
    \end{cases} \hquad\gI_s(\boldsymbol{x}) = \begin{cases}
        0       & s \geq 0, \\
        +\infty & \text{otherwise}.
    \end{cases}
\end{gather*}

Let the dual variables for the constraint $\tilde{\boldsymbol{x}}  = \boldsymbol{x}$ be $\boldsymbol{y} = (w_x, w_s, y_I, y_E)$. The ADMM updates for solving \cref{eq:FlexQP} are given by
\begin{tcolorbox}[width=1.025\linewidth,boxrule=1pt,boxsep=0pt,colback=white,center]
\begin{equation} \label{eq:FlexQP_admm_updates}
\begin{aligned}[c]
    \tilde{\boldsymbol{x}}^{k + 1} = \argmin_{\tilde{\boldsymbol{x}}} \hquad f(\tilde{\boldsymbol{x}}) + (\sigma_x / 2) \lVert \tilde{x} - x^k + \sigma_x^{-1} w_x^k \rVert_2^2 + (\sigma_s / 2) \lVert \tilde{s} - s^k + \sigma_s^{-1} w_s^k \rVert_2^2 \\
    + (\rho_I / 2) \lVert \tilde{z}_I - z_I^k + \rho_I^{-1} y_I^k \rVert_2^2 + (\rho_E / 2) \lVert \tilde{z}_E - z_E^k + \rho_E^{-1} y_E^k \rVert_2^2,
\end{aligned}
\end{equation}
\begin{alignat*}{2}
    &\hspace{-0.75em}x^{k + 1} = \alpha \tilde{x}^{k + 1} + (1 - \alpha) x^k + \sigma_x^{-1} w_x^k, & &s^{k + 1} = \left(\alpha \tilde{s}^{k + 1} + (1 - \alpha) s^k + \sigma_s^{-1} w_s^k\right)_+, \\
    &\hspace{-0.75em}z_I^{k + 1} = S_{\mu_I/\rho_I} ( \alpha \tilde{z}_I^{k + 1} + (1 - \alpha) z_I^k + \rho_I^{-1} y_I^k ), & &z_E^{k + 1} = S_{\mu_E/\rho_E} ( \alpha \tilde{z}_E^{k + 1} + (1 - \alpha) z_E^k + \rho_E^{-1} y_E^k ), \\
    &\hspace{-0.75em}w_x^{k + 1} = w_x^k + \sigma_x (\alpha \tilde{x}^{k + 1} + (1 - \alpha) x^k - x^{k + 1}), &\hquad &w_s^{k + 1} = w_s^k + \sigma_s (\alpha \tilde{s}^{k + 1} + (1 - \alpha) s^k - s^{k + 1}), \\
    &\hspace{-0.75em}y_I^{k + 1} = y_I^k + \rho_I (\alpha \tilde{z}_I^{k + 1} + (1 - \alpha) z_I^k - z_I^{k + 1}), & &y_E^{k + 1} = y_E^k + \rho_E (\alpha \tilde{z}_E^{k + 1} + (1 - \alpha) z_E^k - z_E^{k + 1}),
\end{alignat*}
\end{tcolorbox}
where $\sigma_x, \sigma_s, \rho_I, \rho_E > 0$ are the augmented Lagrangian penalty parameters, $\alpha \in (0, 2)$ is the ADMM relaxation parameter, $(s)_+ = \max(s, 0)$ is the rectified linear unit (ReLU) activation function, and $S_{\kappa}(z) = (z - \kappa)_{+} - (-z - \kappa)_{+}$ is the soft thresholding operator, which is the proximal operator of the $\ell_1$ norm~\citep{boyd2011distributed}. Since $x$ is unconstrained in the second block, we have that $w_x^{k + 1} = 0$ for all $k \geq 0$, so the $w_x$ variable and update can be disregarded. The first block update is the most computationally-demanding step of the algorithm and requires the solution of an equality-constrained QP. We show how to solve this QP using either a direct or indirect method in \cref{appendix:eq_constrained_qp}; the indirect method becomes the only suitable choice for large-scale problems where the dimension can be very large. The final algorithm is summarized in \cref{alg:FlexQP} of \cref{appendix:algorithm}.

\subsection{Convergence \& Theoretical Analysis}

We establish the convergence of \cref{alg:FlexQP} by showing that there always exists a saddle point of the Lagrangian for \cref{eq:FlexQP} for any $\mu_I, \mu_E > 0$, as long as the relaxed QP objective in \cref{eq:relaxed_qp} is not unbounded below. Then, since the optimization is a composite minimization of two closed, proper, convex functions, the algorithm converges by the general convergence of two-block ADMM~\citep{boyd2011distributed}. The proof is provided in \cref{appendix:proof_of_convergence}.

\begin{theorem} \label{theorem:convergence}
    Assume the relaxed objective $\phi(x, s; \mu_I, \mu_E)$, defined in \cref{eq:relaxed_qp}, is not unbounded below.
    Then, \cref{alg:FlexQP} for solving \cref{eq:FlexQP}, equivalently \cref{eq:relaxed_qp}, converges to a saddle point $(\hat{\tilde{\boldsymbol{x}}}, \hat{\boldsymbol{x}}, \hat{\boldsymbol{y}})$ of the Lagrangian for \cref{eq:FlexQP}, given by
    \begin{equation} \label{eq:flexqp_lagrangian}
        \mathcal{L}(\tilde{\boldsymbol{x}}, \boldsymbol{x}, \boldsymbol{y}) = f(\tilde{\boldsymbol{x}}) + g(\boldsymbol{x}) + \boldsymbol{y}^\top (\tilde{\boldsymbol{x}} - \boldsymbol{x}).
    \end{equation}
\end{theorem}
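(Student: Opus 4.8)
The plan is to follow the two-step strategy announced before the statement: first show that the Lagrangian \cref{eq:flexqp_lagrangian} admits a saddle point whenever $\phi$ is bounded below, and then invoke the standard convergence theorem for two-block ADMM. Most of the work lives in the first step; the second is hypothesis-checking.

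I would begin with the easy hypothesis, namely that $f$ and $g$ are closed, proper, and convex. The quadratic $\frac{1}{2}\tilde{x}^\top P \tilde{x} + q^\top \tilde{x}$ is convex since $P \in \sS_+^n$, and $\gI_I, \gI_E$ are indicators of affine sets, which are closed and convex; their sum $f$ is therefore closed and convex, and proper because given any $(\tilde{x}, \tilde{s})$ one can solve for $\tilde{z}_I, \tilde{z}_E$, so $\dom f \neq \emptyset$. Likewise $g$ is the sum of the indicator $\gI_s$ of the nonnegative orthant and two $\ell_1$ norms, each closed, proper, and convex. This is routine.

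The crux is exhibiting a saddle point. Here I would use that \cref{eq:FlexQP} is equivalent to the relaxed problem \cref{eq:relaxed_qp}, which after the standard epigraph rewriting of each penalty, $\mu\norm{z}_1 = \min\{\mu\vone^\top t : -t \leq z \leq t\}$, becomes an honest convex quadratic program, namely a convex quadratic minimized over a polyhedron. Two facts then combine. First, the feasible set is nonempty (pick any $x$, set $s=0$, and read off $z_I, z_E$), so by the Frank--Wolfe theorem for quadratic programs---a quadratic that is bounded below on a nonempty polyhedron attains its infimum---the assumption that $\phi$ is not unbounded below yields a primal minimizer. Second, because every constraint is affine (the coupling $\tilde{\boldsymbol{x}} = \boldsymbol{x}$ together with the affine relations encoded by the indicators), strong duality holds for this QP with no Slater-type condition, and the dual optimum is attained as well. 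Attainment of both optima with zero duality gap is precisely the existence of a saddle point of \cref{eq:flexqp_lagrangian}.

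With both hypotheses verified, I would finish by appealing to the convergence theory for two-block ADMM \citep{boyd2011distributed}: viewing the scheme through its equivalence to Douglas--Rachford splitting, the existence of a saddle point guarantees that the iterates \cref{eq:FlexQP_admm_updates} converge, with $\tilde{\boldsymbol{x}}^k - \boldsymbol{x}^k \to 0$ and the dual variables converging, to a saddle point $(\hat{\tilde{\boldsymbol{x}}}, \hat{\boldsymbol{x}}, \hat{\boldsymbol{y}})$. The main obstacle is the saddle-point existence: coercivity is unavailable because $P$ may be singular and the $\ell_1$ penalties grow only linearly, so attainment of the minimum rests genuinely on the polyhedral structure via Frank--Wolfe rather than on any compactness or strong-convexity argument.
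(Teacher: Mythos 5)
Your argument is correct in substance and follows the same two-step skeleton as the paper (saddle-point existence, then two-block ADMM convergence via \citet{boyd2011distributed}), but the key lemma you use for saddle-point existence is genuinely different. The paper's proof in \cref{appendix:proof_of_convergence} never leaves the splitting formulation \cref{eq:FlexQP}: it verifies the relative-interior constraint qualification $\relint(\dom f) \cap \relint(\dom g) \neq \emptyset$ (picking any $(x,s)$ with $s>0$ and reading off $z_I, z_E$) and cites \citet[Theorems 16.4 and 37.3]{rockafellar1970convex} to conclude strong duality and existence of a saddle point of \cref{eq:flexqp_lagrangian} directly. You instead pass to an epigraph reformulation of \cref{eq:relaxed_qp}, obtain primal attainment from the Frank--Wolfe theorem, and get zero gap and dual attainment from the affine-constraint refinement of Slater's condition. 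What your route buys is that it makes explicit where the hypothesis that $\phi$ is not unbounded below actually enters --- it is what secures \emph{primal} attainment, via Frank--Wolfe, since (as you correctly note) coercivity is unavailable; the paper's citation-based argument leaves this attainment step implicit. What the paper's route buys is that it works with the Lagrangian \cref{eq:flexqp_lagrangian} itself, so no translation between formulations is needed. That translation is the one place where your write-up is thinner than it should be: the strong duality you establish is for the \emph{fully dualized} epigraph QP, whereas \cref{eq:flexqp_lagrangian} dualizes only the consensus constraint $\tilde{\boldsymbol{x}} = \boldsymbol{x}$ and keeps the indicators and $\ell_1$ terms inside $f$ and $g$. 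Your closing assertion that ``attainment of both optima with zero duality gap is precisely the existence of a saddle point of \cref{eq:flexqp_lagrangian}'' therefore needs one further (standard, and in your setting valid) step: take the KKT multipliers of the epigraph QP, let $\hat{\boldsymbol{y}}$ be the multiplier block attached to the consensus constraint, and fold the remaining multipliers back into subgradients, checking $-\hat{\boldsymbol{y}} \in \partial f(\hat{\boldsymbol{x}})$ and $\hat{\boldsymbol{y}} \in \partial g(\hat{\boldsymbol{x}})$ --- the box-constraint multipliers on the epigraph variables combine into a subgradient of $\mu \norm{\cdot}_1$, and the multiplier of $s \geq 0$ lands in the normal cone of the orthant. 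This bookkeeping is exactly the work that the paper's relative-interior condition does silently (it is the constraint qualification licensing $\partial(f+g) = \partial f + \partial g$), so neither proof gets it for free; yours just pays for it in a different currency.
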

The assumption that the objective is not unbounded below is called \emph{coercivity} and is relatively weak~\citep{bauschke2017convex}.
Coercivity is only broken in the rare case when there exists an $x \neq 0$ such that $Px = 0$, $Gx = 0$, $Ax = 0$, and $q^\top x < 0$, which causes the optimal solution to diverge.
In our cases of interest, we consider over-constrained problems with many more constraints than optimization variables, thus it is extremely unlikely that this assumption does not hold.
Moreover, a bounded objective can be guaranteed if $P \succ 0$ or if $G$ or $A$ are full column rank.

The following theorem establishes the relationship between the FlexQP solution and the solution to the original QP and can be proven using the definition of soft thresholding. The proof is given in \cref{appendix:proof_of_second_theorem}.
\begin{theorem} \label{thm:FlexQP_solution}
For any $\mu_I, \mu_E > 0$, let $(\hat{\tilde{\boldsymbol{x}}}, \hat{\boldsymbol{x}}, \hat{\boldsymbol{y}})$ solve the relaxed QP \cref{eq:relaxed_qp} using \cref{alg:FlexQP}. Then, $|\hat{y}_{I, i}| \leq \mu_I$ for all inequality constraints $i = 1, \ldots, m$ and $|\hat{y}_{E, j}| \leq \mu_E$ for all equality constraints $j = 1, \ldots, p$. Furthermore, let $(x^*, y_I^*, y_E^*)$ solve \cref{eq:qp}, if it is feasible.  If the conditions of \cref{thm:minimizers} hold, then $(\hat{x}, \hat{y}_I, \hat{y}_E) = (x^*, y_I^*, y_E^*)$. Otherwise, for any infeasible constraint $i$ with associated dual variable $y_i$, the FlexQP solution satisfies $|\hat{y}_i| = \mu_i$.
\end{theorem}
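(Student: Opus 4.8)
The plan is to exploit the fixed-point characterization of the ADMM iteration at the saddle point guaranteed by \cref{theorem:convergence}, and to read off the three claims from (i) the fixed-point equation of the soft-thresholding step and (ii) the stationarity conditions obtained by minimizing the Lagrangian \cref{eq:flexqp_lagrangian} over each block. The first and third claims are purely local statements about the $z_I$ and $z_E$ updates and follow directly from the definition of the soft-thresholding operator $S_\kappa$, exactly as the paper hints. The second claim is the substantive one: I would establish it by verifying that the recovered tuple $(\hat{x}, \hat{y}_I, \hat{y}_E)$ satisfies every condition in \cref{eq:qp_optimality_conditions}, so that it is a genuine solution of the original QP (and equals $(x^*,y_I^*,y_E^*)$ whenever the latter is unique).

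First I would derive the bound and saturation claims. At the saddle point one has $\hat{\tilde{z}}_I = \hat{z}_I$, so the update \cref{eq:FlexQP_admm_updates:z_I_update} collapses to the fixed-point equation $\hat{z}_I = S_{\mu_I/\rho_I}(\hat{z}_I + \rho_I^{-1}\hat{y}_I)$, and analogously for $z_E$. Writing $u_i = \hat{z}_{I,i} + \rho_I^{-1}\hat{y}_{I,i}$ and $\kappa = \mu_I/\rho_I$, I would split into the three regimes $u_i > \kappa$, $|u_i| \le \kappa$, and $u_i < -\kappa$ defining $S_\kappa$. In the middle regime one gets $\hat{z}_{I,i}=0$ and $|\hat{y}_{I,i}| \le \mu_I$; in the two outer regimes one gets $\hat{z}_{I,i}\neq 0$ together with $\hat{y}_{I,i} = \mu_I\,\sign(\hat{z}_{I,i})$, hence $|\hat{y}_{I,i}| = \mu_I$. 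This simultaneously proves the uniform bound $|\hat{y}_{I,i}|\le\mu_I$ and that a nonzero residual $\hat{z}_{I,i}\neq 0$ forces $|\hat{y}_{I,i}| = \mu_I$, since $\hat{z}_I = G\hat{x}+\hat{s}-h$ is precisely the inequality violation. The equality case is identical with $(\mu_E,\rho_E,\hat{z}_E,\hat{y}_E)$.

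For the second claim I would first invoke \cref{thm:minimizers}: under its hypotheses the $x$-minimizers of \cref{eq:qp} and \cref{eq:relaxed_qp} coincide, so $\hat{x}=x^*$ is feasible for the original QP; choosing the optimal slack $\hat{s}=h-G\hat{x}\ge 0$ then drives both $\ell_1$ penalties to zero, i.e. $\hat{z}_I=0$ and $\hat{z}_E=0$. It then remains to recover the dual conditions from the Lagrangian. Minimizing \cref{eq:flexqp_lagrangian} over the $\tilde{\boldsymbol{x}}$ block, with the links $\tilde{z}_I=G\tilde{x}+\tilde{s}-h$ and $\tilde{z}_E=A\tilde{x}-b$ enforced by $\gI_I,\gI_E$, yields after using $\hat{w}_x=0$ and $\hat{\tilde{x}}=\hat{x}$ the stationarity $P\hat{x}+q+G^\top\hat{y}_I+A^\top\hat{y}_E=0$ and the relation $\hat{w}_s=-\hat{y}_I$. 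Minimizing over the $\boldsymbol{x}$ block, the $s$-subgradient gives $\hat{w}_s\in N_{\{s\ge 0\}}(\hat{s})$, i.e. $\hat{w}_s\le 0$ and $\hat{w}_s\odot\hat{s}=0$; combined with $\hat{w}_s=-\hat{y}_I$ this delivers $\hat{y}_I\ge 0$ and $\hat{y}_I\odot\hat{s}=0$. Feeding in $\hat{z}_I=0$, namely $G\hat{x}-h=-\hat{s}\le 0$, and $\hat{z}_E=0$, namely $A\hat{x}-b=0$, I would then check the five conditions of \cref{eq:qp_optimality_conditions} one by one, with complementarity following from $\hat{y}_I\odot(G\hat{x}-h)=-\hat{y}_I\odot\hat{s}=0$. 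Hence $(\hat{x},\hat{y}_I,\hat{y}_E)$ solves the original QP.

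The main obstacle I anticipate is this passage from the ADMM multipliers to the QP multipliers. The bound and saturation claims are immediate from the soft-thresholding fixed point, but identifying $\hat{y}_I$ as the inequality multiplier with the correct sign convention requires carefully bookkeeping how $\tilde{s}$ enters both $\gI_I$ and the augmented terms, and it is this step that produces the crucial identity $\hat{w}_s=-\hat{y}_I$ and thereby the sign and complementarity conditions. A secondary subtlety is that, because no rank assumptions are placed on $G$ or $A$, the QP duals need not be unique; the honest statement proved is that the FlexQP tuple satisfies \cref{eq:qp_optimality_conditions}, which coincides with $(x^*,y_I^*,y_E^*)$ exactly when the QP solution is unique.
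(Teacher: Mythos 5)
Your proof is correct, and for the first and third claims it is essentially the paper's own argument: a case analysis on the soft-thresholding operator combined with the dual update. The only cosmetic difference there is that the paper runs the case analysis on arbitrary iterates (showing $|y^{k+1}|\leq\mu$ for every $k$, hence in the limit), whereas you work at the fixed point $\hat{z} = S_{\mu/\rho}(\hat{z} + \rho^{-1}\hat{y})$ after invoking \cref{theorem:convergence}; both are fine. Where you genuinely diverge is the second claim. The paper disposes of it in one sentence --- ``Applying \cref{thm:minimizers} shows the second statement'' --- but \cref{thm:minimizers} is a statement about \emph{primal} minimizers only, so that citation by itself does not establish anything about $(\hat{y}_I, \hat{y}_E)$. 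Your route closes exactly this gap: you use \cref{thm:minimizers} for the primal identification $\hat{x}=x^*$, and then recover the dual conditions by block-wise stationarity of the Lagrangian \cref{eq:flexqp_lagrangian}, extracting the identity $\hat{w}_s = -\hat{y}_I$, the normal-cone conditions $\hat{w}_s \leq 0$, $\hat{w}_s \odot \hat{s} = 0$, and hence dual feasibility and complementarity, so that $(\hat{x},\hat{y}_I,\hat{y}_E)$ verifiably satisfies all of \cref{eq:qp_optimality_conditions}. This buys a rigorous proof of the dual half of the theorem that the paper leaves implicit, at the cost of the extra bookkeeping you describe. Your closing caveat is also a genuine refinement rather than a defect: since no rank assumptions are made on $G$ or $A$, QP multipliers need not be unique, so the literal equality $(\hat{y}_I,\hat{y}_E)=(y_I^*,y_E^*)$ in the theorem statement is only warranted under uniqueness, and the correct general conclusion is the one you prove --- that the FlexQP tuple is itself a KKT point of \cref{eq:qp}.
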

This shows that FlexQP solves \cref{eq:qp} if the original QP is feasible, and otherwise identifies a stationary point of the infeasibilities, similar to \citet[Theorem 17.4]{nocedal2006numerical}.

Finally, we summarize the key roles of the different hyperparameters of our algorithm. These insights are important for understanding the parameterization of our deep-unfolded architecture presented in the next section.

\textbf{Role of Elastic Penalty Parameters:} The parameters $\mu_I$ and $\mu_E$ only appear in a single step of the algorithm as part of the soft thresholding in the second block ADMM updates. Larger elastic penalties $\mu$ result in a larger threshold, meaning that a larger amount of constraint violation will be zeroed out. The choice of $\mu$ is key for satisfying the conditions of \cref{thm:minimizers}.

\textbf{Role of Augmented Lagrangian Penalty Parameters:} The role of the parameter $\sigma_x$ is to regularize the quadratic cost matrix $P$ and allow the equality-constrained QP~\cref{eq:FlexQP_admm_updates} to admit a unique solution even if $P$ is not positive definite ($P = 0$ captures linear programs). We tested multiple fixed values of $\sigma_x$ along with adaptive and learned rules, but a fixed $\sigma_x = \num{1e-6}$ appears to work very well in practice. This is similar to the choice of the $\sigma$ parameter in OSQP~\citep{stellato2020osqp}.

The parameter $\sigma_s$ plays the role of a quadratic cost on the slack variable $s$ when solving~ \cref{eq:FlexQP_admm_updates}. It also plays a small role in regularizing the constraint matrix $G$. In practice, tuning this parameter is the most difficult as the optimal value appears to depend strongly on the scaling of the objective and the constraints. This motivates our adaptive data-driven approach described in \cref{sec:deep_unfolding}.

The penalty parameters $\rho_I$ and $\rho_E$ play two key roles in the algorithm. First, they regularize the constraint matrices $G$ and $A$ so that~\cref{eq:FlexQP_admm_updates} is solvable regardless of the rank of $G$ or $A$. Second, they weight the noise level in the soft thresholding operations, playing an inverse role to $\mu_I$ and $\mu_E$, where a larger $\rho$ results in a smaller threshold. Determining the optimal values of these parameters by hand is unintuitive as they can have varying effects on the optimization, further motivating the deep unfolding approach presented in the next section.

\section{Accelerating Quadratic Programming via Deep Unfolding}
\label{sec:deep_unfolding}

\begin{figure}[h]
\centering
\includegraphics[scale=0.65]{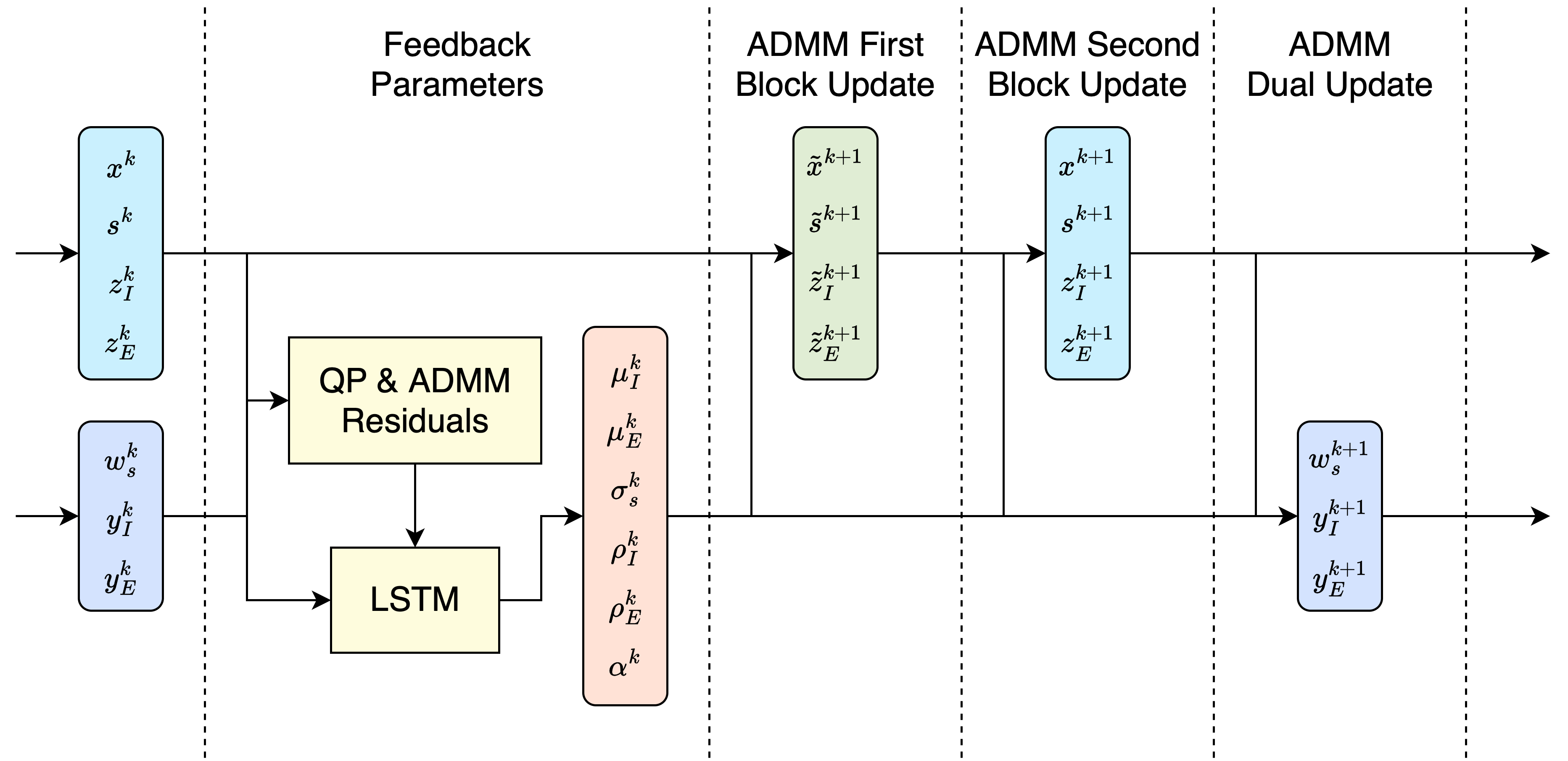}
\caption{One layer of our proposed Deep FlexQP architecture. We learn dimension-agnostic feedback policies for the parameters while the propagation from one layer to the next is defined by the ADMM updates~\cref{eq:FlexQP_admm_updates}.}
\label{fig:architecture}
\end{figure}

We focus our study on two recently proposed data-accelerated QP optimizers.
The deep centralized QP optimizer from \citet{saravanos2025deep} is a version of deep-unfolded OSQP where the penalty parameters $\rho$ and relaxation parameter $\alpha$ are learned as feedback policies on the problem residuals using an analogy to feedback control. In our comparisons, we refer to their method as {\color{yellow}\textbf{Deep OSQP}}. The main limitation of their approach is that only scalar penalty parameters are learned, but it could be the case that different penalty parameters should be applied to different constraints to more effectively accelerate the optimizer. This was the main motivation for the deep QP method proposed by \citet{ichnowski2021accelerating}, where a policy that outputs a vector of penalty parameters is learned using reinforcement learning. The vector policy is applied across the constraint dimensions so it is dimension-agnostic and generalizes across different problem classes. The authors show that the vector policy outperforms the scalar one across a suite of QP benchmarks. However, unlike \citet{saravanos2025deep}, the authors do not learn the relaxation parameter $\alpha$, which can greatly improve the convergence of ADMM~\citep{boyd2011distributed}. We implement the approach from \citet{ichnowski2021accelerating} and train it using the supervised learning scheme from \citet{saravanos2025deep}, leading to the baseline {\color{red}\textbf{Deep OSQP --- RLQP Parameterization}}. Finally, we implement a best-of-both-worlds approach that learns a vector feedback policy for the penalty parameters $\rho$ while also learning a policy for the ADMM relaxation parameter $\alpha$, which we call {\color{orange}\textbf{Deep OSQP --- Improved}}.

\subsection{Deep FlexQP Architecture}
Our proposed {\color{green}\textbf{Deep FlexQP}} learns feedback policies for the algorithm parameters as a function of the current optimizer state as well as the QP and ADMM residuals (\cref{fig:architecture}). We learn separate policies $\pi_I$, $\pi_E$, and $\pi_\alpha$ for the inequality constraints, equality constraints, and relaxation parameter, respectively. The constraint policies $\pi_I$ and $\pi_E$ take as input the ADMM variables along with the primal and dual QP residuals, and are applied in a batched fashion per constraint coefficient, making the architecture independent of problem size and permutation. The relaxation policy $\pi_\alpha$ takes as input the infinity norms of each residual, providing a scale-invariant representation of the convergence progress. Full expressions for the policy inputs and outputs are given in \cref{appendix:residuals}.

All policies are parameterized by long short-term memory (LSTM) networks~\citep{hochreiter1997long}, with the hypothesis that learning long-term dependencies can aid the selection of the optimal parameters. This furthers the idea from~\citet{saravanos2025deep} that time-varying feedback on the current (nominal) parameters can provide a large improvement. In our case, we are applying feedback based on a latent state capturing the optimization history. Our results show that LSTMs provide the most benefit for problems where the active constraints might change many times over the course of the optimization. An ablation analysis and further discussion is provided in \cref{appendix:ablation_lstm}.

\subsection{Supervised Learning}

For training Deep OSQP variants, we adopt the supervised learning approach from \citet{saravanos2025deep}. Let $x^k(\theta)$ be the $k^\text{th}$ iterate of Deep OSQP parameterized by $\theta$. The training objective is the weighted sum of the optimality gaps between the iterates $x^k(\theta)$ and the optimal solution $x^*$:
\begin{equation} \label{eq:opt_gap_loss}
    \minimize_\theta \sum_{k = 1}^K \gamma_k \norm{x^k(\theta) - x^*}_2,
\end{equation}
where $\gamma_k = \exp((k - K) / 5)$ is a per-iteration scaling factor.

For training Deep FlexQP, we adopt a similar loss, but generalize it to incorporate the optimal Lagrange multipliers based on the discussion in \cref{sec:FlexQP}. We also use the normalized optimality gaps instead of the unnormalized ones so that the scale is automatically determined based on the distance from the optimal solution:
% $\min_\theta \sum_{k=1}^K \norm{ \xi^k(\theta) - \xi^*}_2 / \norm{\xi^*}_2$,
\begin{equation} \label{eq:new_supervised_loss}
    \minimize_\theta \sum_{k=1}^K \norm{ \xi^k(\theta) - \xi^*}_2 / \norm{\xi^*}_2,
\end{equation}
where $\xi = (x, y_I, y_E)$.
By including the Lagrange multipliers here, we are able to enforce the Deep FlexQP optimizer to select penalty parameters that meet the conditions of \cref{thm:minimizers}, namely that $\mu_I \geq \norm{y_I^*}_\infty$ and $\mu_E \geq \norm{y_E^*}_\infty$. This is due to the fact that the Lagrange multipliers of Deep FlexQP $y_I(\theta)$ and $y_E(\theta)$ are upper-bounded (in absolute value) by the current selection of $\mu$ (see \cref{thm:FlexQP_solution}). An ablation studying the effect of this loss is provided in \cref{appendix:ablation_lagr_mult_loss}.

\subsection{PAC-Bayes Generalization Bounds}
\label{sec:deep_unfolding:pac_bayes_bounds}

Recent approaches have been proposed for establishing generalization bounds for guaranteeing the performance of learning-to-optimize methods, including a binary loss approach from \cite{sambharya2025data} as well as a more informative progress metric by \citet{saravanos2025deep}, given as
\begin{equation} \label{eq:saravanos_gen_bound_loss}
    L(\theta) = \min\left(\frac{\lVert x^K(\theta) - x^* \rVert_2}{\lVert x^0(\theta) - x^* \rVert_2}, 1\right).
\end{equation}

\begin{wrapfigure}{r}{0.33\textwidth}
    \vspace{1em}
    \centering
    \includegraphics[width=0.33\textwidth,trim={0.0cm 0.2cm 0cm 0cm},clip]{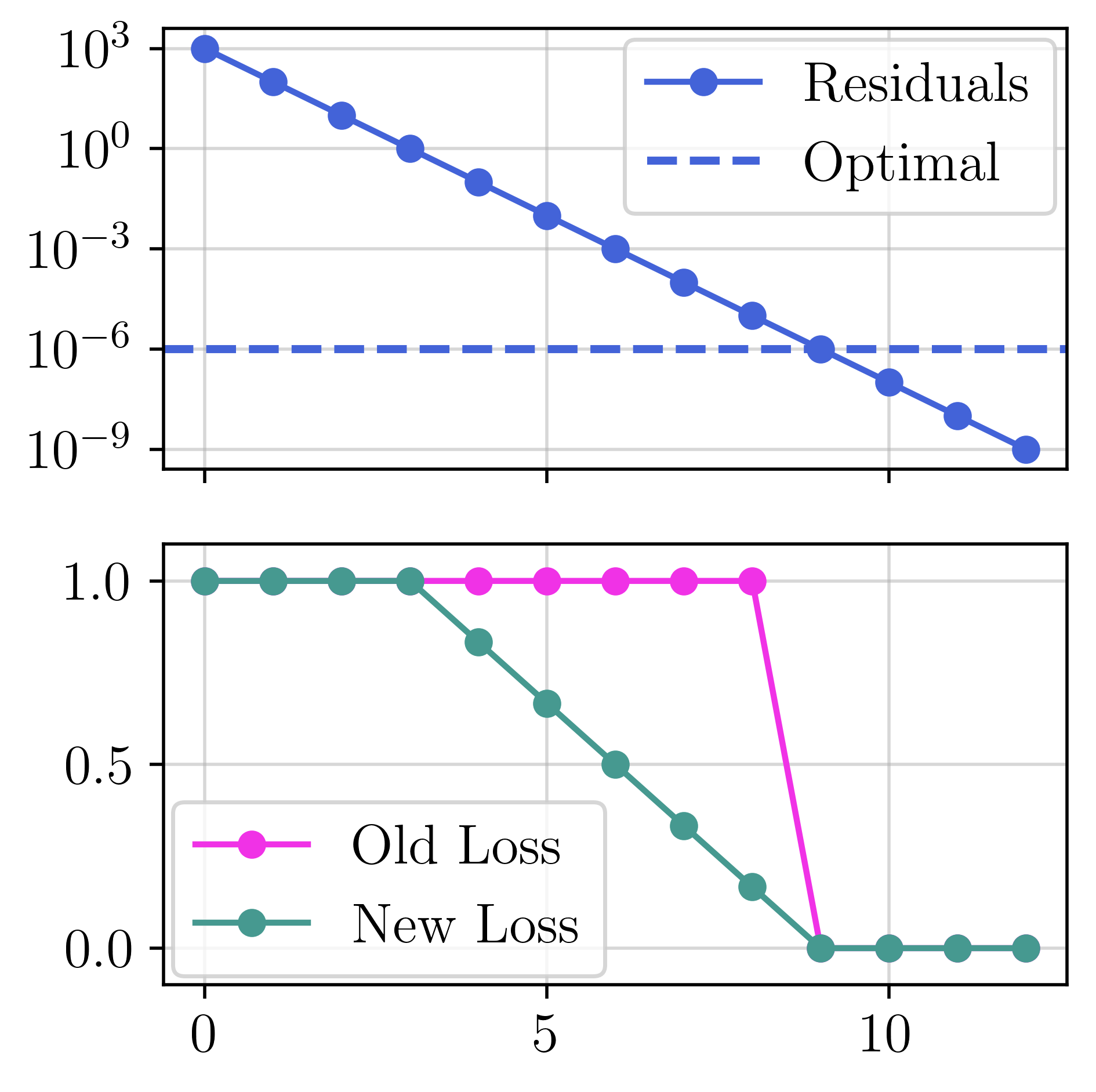}
    \vspace{-0.2cm}
    \caption{Log-scaled loss better captures small errors when the solution is close to the optimal.}
    \label{fig:log_scale_loss_example}
\end{wrapfigure}
These approaches can be used to construct PAC-Bayes generalization bounds on the mean performance of the optimizer that hold with high probability. Nevertheless, a limitation of the resulting PAC-Bayes bound from \cref{eq:saravanos_gen_bound_loss} is that it assumes that the losses can fall anywhere within the range $[0, 1]$, despite the fact that, in practice, most of the final optimality gaps fall very close to 0 (on the order of $10^{-2}$ and smaller). In other words, the loss in \cref{eq:saravanos_gen_bound_loss} does not properly account for the scale of the errors, and as a result, obtaining a meaningful bound might require exponentially more training samples.
For example, \cref{fig:gen_bound_comparison:prev_gen_bound} shows that training for this generalization bound loss results in a bound that is uninformative since it sits above even the vanilla optimizers and does not capture the behavior well at small errors.

To address this, we design a loss that is zero when the learner performs as well as or better than the optimal solution, and increases linearly as the performance decreases on a log-scale; see \cref{fig:log_scale_loss_example} for a visualization. Furthermore, we penalize distance from the optimal solution with respect to the norm of the QP residuals, which better takes into account the problem scale:
\begin{equation} \label{eq:new_gen_bound_loss}
    L(\theta)
    = \text{clip}\left( 1 - \frac{\log \lVert R(\xi^K(\theta)) \rVert_2}{\log \lVert R(\xi^*) \rVert_2}, 0, 1 \right),
\end{equation}
where $R(\xi) \defeq (Px + q + G^\top y_I + A^\top y_E, \max(Gx - h, 0), Ax - b)$ computes the residuals of the original QP in \cref{eq:qp}.
As intended, training for this loss better captures the performance when the residuals are very small. Results are presented in \cref{fig:gen_bound_comparison:new_gen_bound} and \cref{appendix:gen_bounds}.

\begin{figure}[h]
\centering
\hspace*{\fill}\begin{subfigure}[t]{0.45\textwidth}
    \centering
    \includegraphics[width=\linewidth]{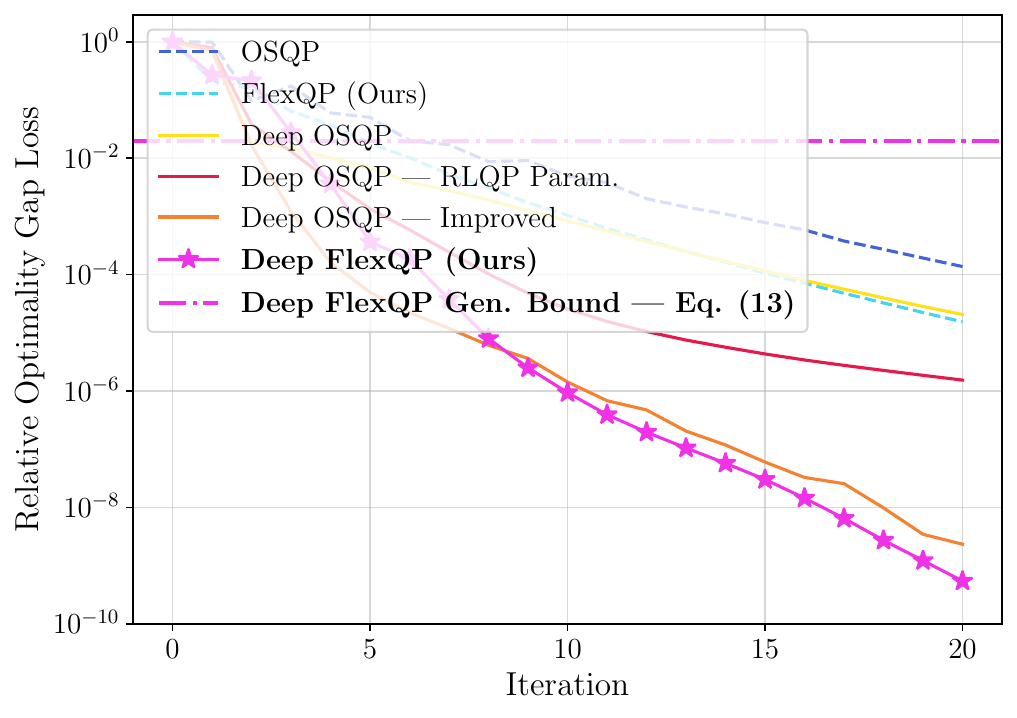}
    \caption{Deep FlexQP trained for the generalization bound loss \cref{eq:saravanos_gen_bound_loss}.}
    \label{fig:gen_bound_comparison:prev_gen_bound}
\end{subfigure}\hfill\begin{subfigure}[t]{0.45\textwidth}
    \centering
    \includegraphics[width=\linewidth]{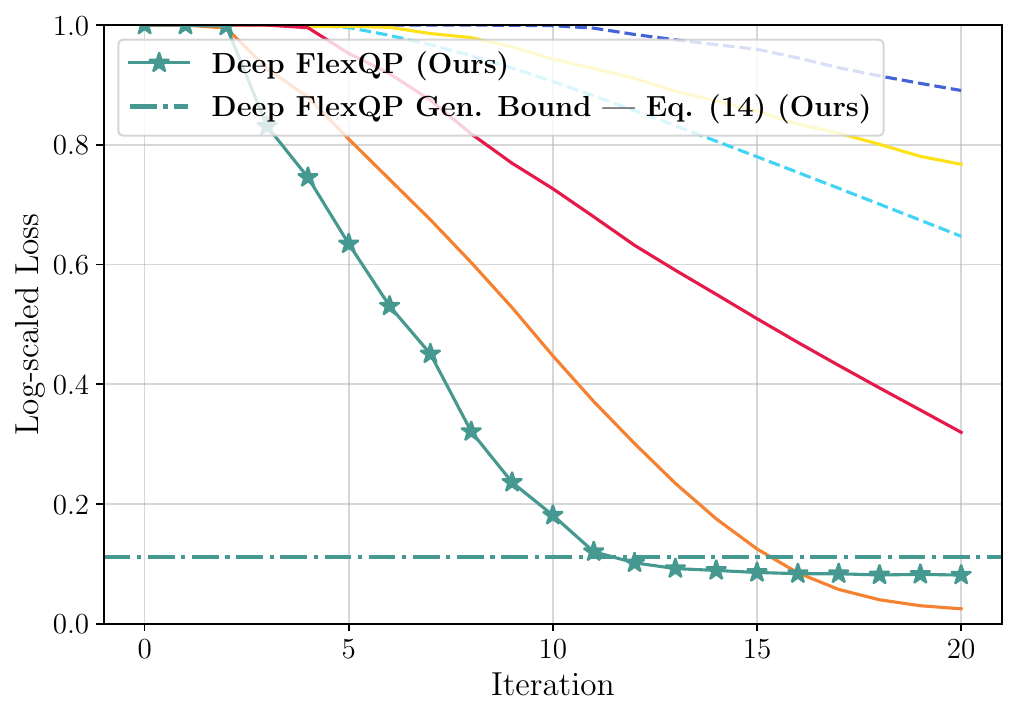}
    \caption{Deep FlexQP trained for our proposed generalization bound loss \cref{eq:new_gen_bound_loss}.}
    \label{fig:gen_bound_comparison:new_gen_bound}
\end{subfigure}\hspace*{\fill}
\caption{Optimizer comparison on 1000 test LASSO problems. Training using our log-normalized loss~\cref{eq:new_gen_bound_loss} results in a substantially more informative performance guarantee.}
\label{fig:gen_bound_comparison}
\end{figure}

% \clearpage
\section{Experimental Results}
\label{sec:applications}

We evaluate our approach on three classes of problems of increasing complexity: small- to medium-scale QPs, large-scale QPs, and nonconvex nonlinear programs solved via SQP.
All algorithms were implemented using dense batched matrices in PyTorch, and all experiments were run on a system with an NVIDIA RTX 4090 GPU.
Across all experiments, we use a learning rate of $10^{-3}$ and a batch size of 50 (except for the large-scale problems, which use a batch size of 1, as larger batches exceeded the available GPU memory).

\subsection{Small- to Medium-Scale QPs}
\label{sec:applications:medium_scale_qps}

We apply our deep-unfolding methodology to a benchmark suite of QPs including portfolio optimization problems from finance, classification and regression problems from machine learning, and linear optimal control problems.
The results are presented in \cref{fig:qp_performance_comparisons}, and details on the problem representations as well as the data generation processes are provided in \cref{appendix:problem_classes}.
In the following plots, {\color{blue}\textbf{OSQP}} and {\color{cyan}\textbf{FlexQP}} are the best performing versions of OSQP and FlexQP, respectively, found using a hyperparameter search (details in \cref{appendix:timing_comparisons}).
{\color{yellow}\textbf{Deep OSQP}} is the approach from \citet{saravanos2025deep},
{\color{red}\textbf{Deep OSQP --- RLQP Parameterization}} is the parameterization from \citet{ichnowski2021accelerating},
and {\color{orange}\textbf{Deep OSQP --- Improved}} is the best-of-both-worlds version of deep-unfolded OSQP described in \cref{sec:deep_unfolding}.
Finally, {\color{green}\textbf{Deep FlexQP}} is our proposed deep-unfolded FlexQP optimizer with LSTM policy parameterization and trained using the loss~\cref{eq:new_supervised_loss}. Each model is trained for 500 epochs on 500 problems (except for the random QP classes, which use 2000 problems) and evaluated on 1000 test problems.
We also perform an extensive timing comparison and analysis between all of the above optimizers, presented in \cref{appendix:timing_comparisons}.

\begin{figure}[h]
\centering
\includegraphics[width=\textwidth]{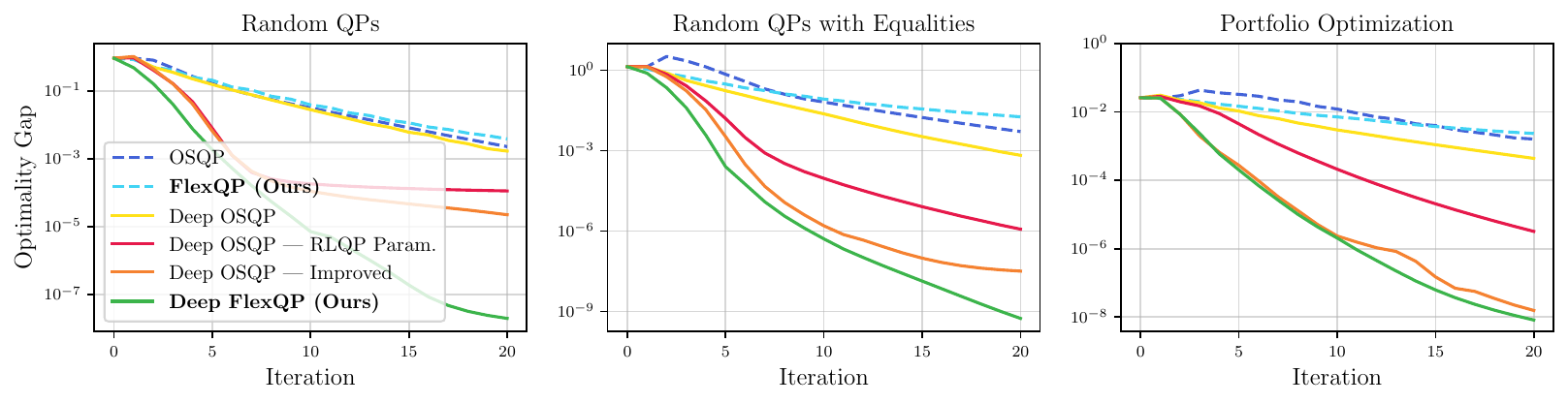}
\includegraphics[width=\textwidth]{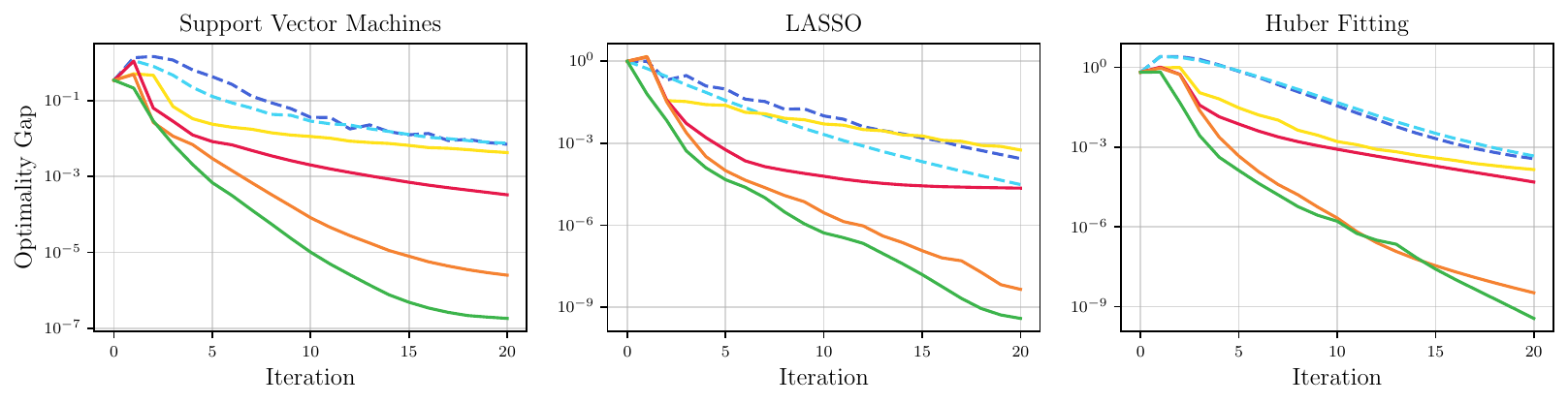}
\includegraphics[width=\textwidth]{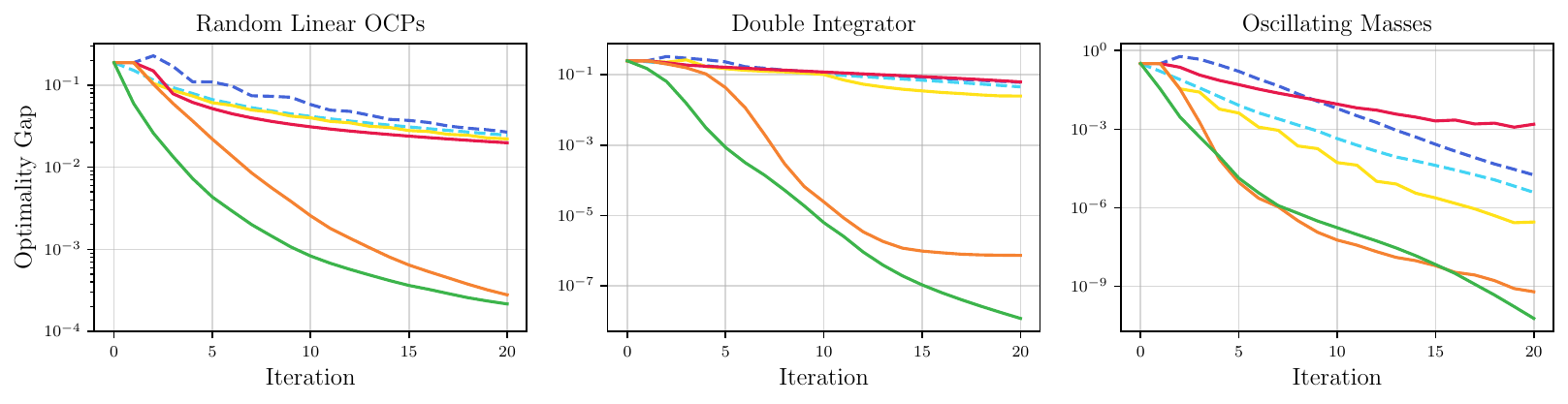}
\caption{Performance comparison of learned deep optimizers and their non-learned counterparts. Our improved version of Deep OSQP outperforms the baselines, while Deep FlexQP consistently surpasses the rest of the methods in terms of convergence to the optimal QP solution.}
\label{fig:qp_performance_comparisons}
\end{figure}

\subsection{Large-Scale QPs}
\label{sec:applications:large_scale_qps}

Next, we verify how well our methodology generalizes to large-scale QPs.
In order to amortize the cost of training on these large-scale problems, we adopt a fine-tuning approach where the models trained on the small- to medium-scale problems from \cref{sec:applications:medium_scale_qps} are fine-tuned on a limited number of large-scale problems for a few epochs.
Each model was fine-tuned on 100 training problems for 5 epochs.
As each epoch takes roughly 3 hours to run, we estimate that training on the same number of problems and for the same number of epochs as the problems considered in \cref{sec:applications:medium_scale_qps} would take over 300 days, showing a clear benefit of the proposed fine-tuning approach.

\cref{fig:large_scale_qp_comparison} shows a comparison on 100 portfolio optimization and support vector machine (SVM) test problems with 10k variables and 10-20k constraints.
Each optimizer is run until the infinity norm of the residuals falls below an absolute tolerance of $\varepsilon = 10^{-3}$, with a timeout of 10 minutes.
Optimizers use the indirect method to solve their first block ADMM update (see \cref{appendix:eq_constrained_qp}).
For the portfolio optimization problems, we report the average number of iterations each optimizer took to converge, and, for the SVM problems, we report the average number of conjugate gradient (CG) iterations that were necessary to solve the linear systems to a tolerance of $\varepsilon_{\text{CG}} = 10^{-2} \cdot \varepsilon$, with full results provided in \cref{appendix:large_scale_qps}.
We observe that the fine-tuned Deep FlexQP outperforms all of the other optimizers.
Surprisingly, the fine-tuning approach does not seem to work as well for the Deep OSQP variants.
Notably, we tried fine-tuning each of the methods using either the optimality gap loss~\cref{eq:opt_gap_loss} or the Lagrange multiplier loss~\cref{eq:new_supervised_loss}.
While the two losses yielded comparable performance on the portfolio optimization problems, the Lagrange multiplier loss performed significantly worse on the SVM problems for the Deep OSQP variants.
As such, the reported Deep OSQP results use the models trained with the optimality gap loss~\cref{eq:opt_gap_loss}.
These results suggest that the superior fine-tuning performance of Deep FlexQP is attributable to the FlexQP architecture itself --- specifically, the elastic relaxation and LSTM parameterization --- rather than the choice of loss function.

\begin{figure}[h]
\centering
\begin{subfigure}[t]{0.5\textwidth}
    \centering
    \includegraphics[height=0.4\textwidth]{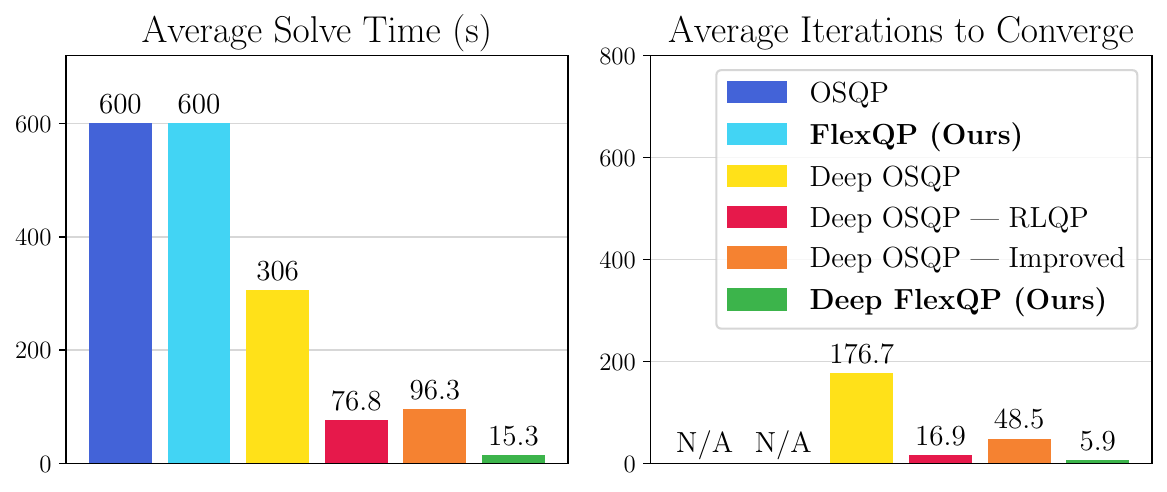}
\end{subfigure}\hfill\begin{subfigure}[t]{0.5\textwidth}
    \centering
    \includegraphics[height=0.4\textwidth]{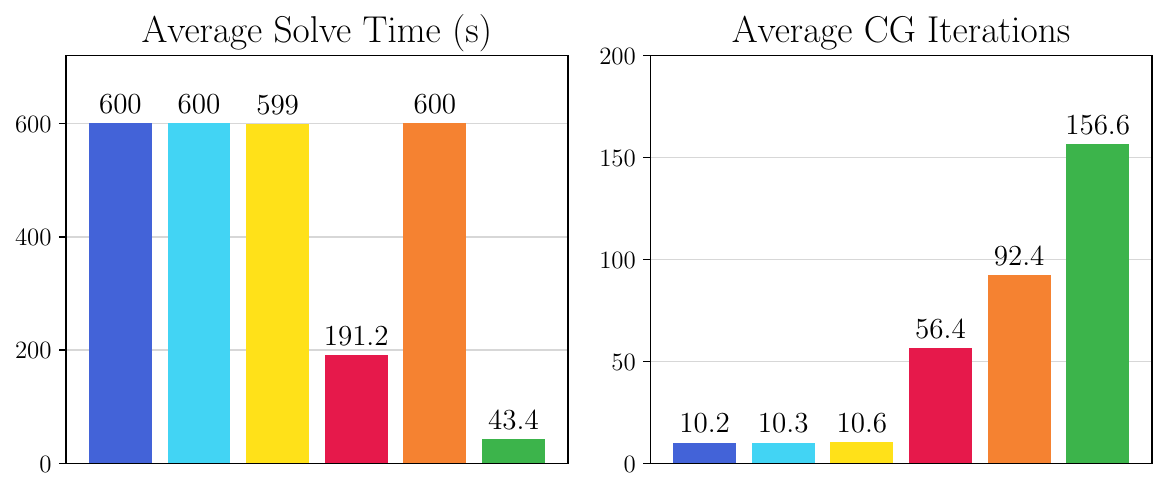}
\end{subfigure}
\caption{Learned vs. traditional optimizers on large-scale QPs. Left: portfolio optimization (10k variables, 10k constraints). Right: support vector machines (10k variables, 20k constraints).}
\label{fig:large_scale_qp_comparison}
\end{figure}

\subsection{Nonconvex Nonlinear Programming using SQP}
\label{sec:applications:nonlinear_optimizations}

Lastly, we apply Deep FlexQP as a submodule in SQP to solve nonconvex NLPs arising from nonlinear optimal control and nonlinear predictive safety filter problems.
Training for the generalization bound loss~\cref{eq:new_gen_bound_loss} yields a numerical certificate of performance that we use when designing the SQP method. Results are shown in \cref{fig:dubins_vehicle_timing_comparison} and \cref{fig:sqp_comparison}. Further details are provided in \cref{appendix:nonlinear_sqp}.

\begin{figure}[h]
\centering
\begin{subfigure}[t]{0.5\textwidth}
    \centering
    \includegraphics[height=0.5\textwidth]{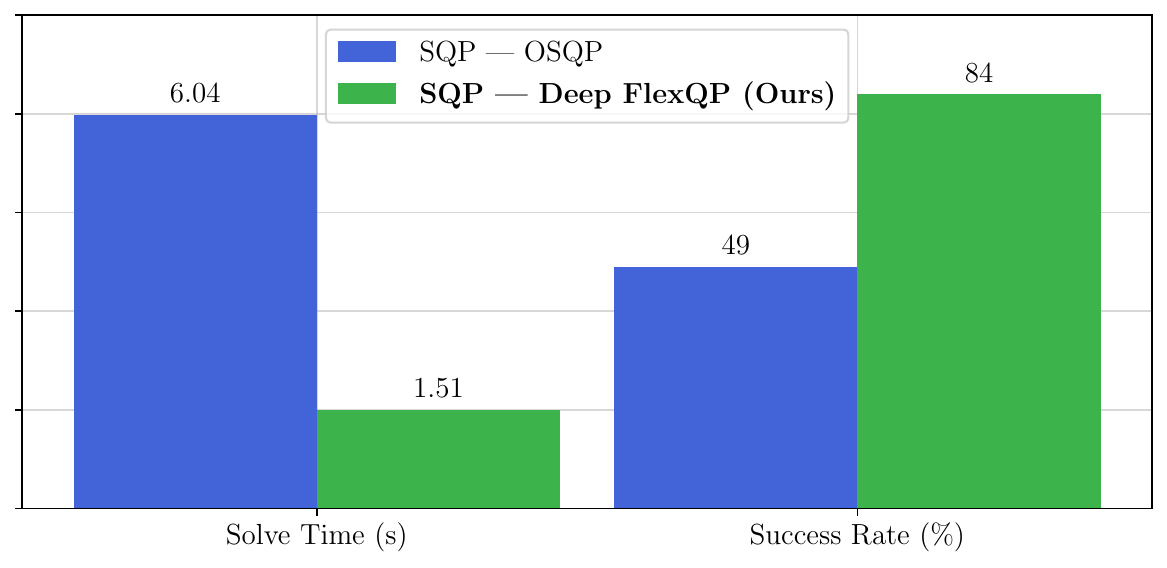}
\end{subfigure}\hfill\begin{subfigure}[t]{0.5\textwidth}
    \centering
    \includegraphics[height=0.5\textwidth]{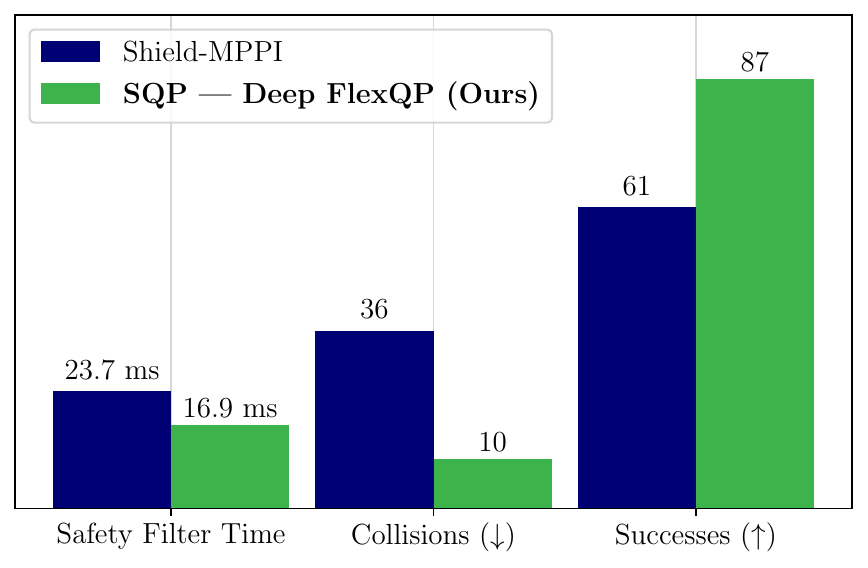}
\end{subfigure}
\caption{Comparison of our approach vs. traditional optimizer baselines on quadrotor trajectory optimization problems (left) and nonlinear predictive safety filter problems (right). Ours is faster than the baselines while vastly improving the task completion rate and safety.}
\label{fig:sqp_comparison}
\end{figure}

\section{Conclusion}
\label{sec:conclusion}

We present FlexQP, a flexible QP solver that natively handles infeasible subproblems by minimizing constraint violation when no feasible solution exists.
This property makes FlexQP particularly well-suited as a submodule in SQP, where infeasible QP subproblems are common and typically require ad hoc recovery strategies.
Our accelerated variant, Deep FlexQP, outperforms both traditional solvers and learned approaches, converging in fewer iterations and less time per solve.
Generalization bounds provide a numerical certificate of performance, and we use these bounds to design SQP solvers for nonlinear optimal control and predictive safety filters.
In our SQP experiments, Deep FlexQP provides a substantial speedup over traditional approaches while also enabling graceful recovery from infeasibility without additional machinery.
Promising future directions include learning warm-starts for FlexQP and extending it to the distributed QP setting of \citet{saravanos2025deep}.

\subsubsection*{Acknowledgments}
This work was supported by the Army Research Office Award \#W911NF2010151.
Alex Oshin was additionally supported by the Georgia Tech Aerospace Engineering Graduate Student Fellowship, and Augustinos Saravanos by the A. Onassis Foundation Scholarship.

\bibliography{references}
\bibliographystyle{iclr2026_conference}

\newpage
\appendix

\section{Extended Related Work}
\label{appendix:extended_related_work}

\textbf{Quadratic Programming.}
Active-set methods, developed as extensions of the simplex method for QPs~\citep{Wolfe1959Simplex}, can be efficiently warm-started but suffer from worst-case exponential complexity~\citep{Klee1970How}.
Interior-point methods rose to prominence with the polynomial-time algorithms of \citet{Karmarkar1984Polynomial} and their extension to general convex programs via self-concordant barriers~\citep{Nesterov1994Interior}, replacing active-set methods as the dominant approach for small- to medium-scale problems~\citep{Wright2004Interior}.
However, interior-point methods often do not scale well to high-dimensional problems and are difficult to warm-start.
This has led to a growing interest in first-order methods in recent years.
These methods are largely based on operator splitting methods such as ADMM, which is equivalent to Douglas-Rachford splitting applied to the dual~\citep{Gabay1983Augmented}.
First-order methods are appealing as they scale favorably to millions of variables and can be effectively warm-started, unlike interior-point methods.
In addition, they are especially well-suited to applications such as MPC, where only moderate accuracy is required and the slow tail convergence of ADMM~\citep{He2012Convergence} is less problematic.

SCS~\citep{odonoghue2016conic} is a first-order solver for general conic programs based on a homogeneous self-dual embedding.
However, using this embedding is inefficient for QPs since it requires reformulating the quadratic cost as a second-order cone constraint.
OSQP~\citep{stellato2020osqp} and COSMO~\citep{garstka2021cosmo} are ADMM-based solvers specifically designed for QPs and general conic programs, respectively.
COSMO can solve QPs as a special case, with the resulting algorithm being nearly identical to OSQP.
We discuss these methods further below in the context of infeasibility detection.

The proposed FlexQP algorithm is a first-order method based on ADMM that has the same
per-iteration complexity as OSQP and COSMO
(see \cref{appendix:eq_constrained_qp}), but uniquely handles infeasible QPs through
an exact $\ell_1$ relaxation of the constraints (\cref{sec:FlexQP}).

\textbf{Penalty Methods and Constrained Optimization.}
Penalty methods have a long history in constrained nonlinear programming, starting from the early works by \citet{Courant1943Variational} proposing the quadratic penalty method, to the log and inverse barrier methods of \citet{Frisch1955Logarithmic} and \citet{Carroll1961Created}. \citet{Fiacco1968Nonlinear} provided a unifying perspective on \emph{exterior-point} penalty methods and \emph{interior-point} barrier methods, and also began the analysis on primal-dual methods.

However, both exterior-point and interior-point methods suffer from poor numerical conditioning as the algorithm converges because the penalty parameter needs to asymptotically approach infinity (for exterior-point methods) or zero (for interior-point methods).
This led to the development of exact penalty methods, based primarily on $\ell_1$ penalty functions~\citep{Eremin1966Penalty,Zangwill1967Nonlinear}.
These methods are more numerically stable because the penalty parameter need only be larger than the largest Lagrange multiplier.
The necessary and sufficient conditions for exactness were given by \citet{Pietrzykowski1969Exact}.
The $\ell_1$ penalty function was also used by \citet{Han1977Globally} to prove the global convergence of SQP, foreshadowing the key role that
exact penalties play in handling infeasible subproblems, as we discuss below.
A modern discussion is given by \citet{nocedal2006numerical} and we use these results to prove the exactness of our relaxed problem formulation~(\cref{thm:minimizers}).

While exact penalty methods fixed the numerical ill-conditioning of previous approaches, the $\ell_1$ penalty function is non-differentiable at the boundary of the feasible region, resulting in a phenomenon known as the \emph{Maratos effect}~\citep{Maratos1978Exact}.
This sparked interest in methods that modify the SQP algorithm to preserve convergence, such as watchdog procedures~\citep{Chamberlain1982Watchdog} and second-order corrections~\citep{Conn1977Penalty,Fletcher1982Model}.

\textbf{Operator Splitting and ADMM.}
Concurrently, the augmented Lagrangian (AL) method, also known as the method of multipliers, was developed by \citet{Hestenes1969Multiplier} and \citet{Powell1969Method}.
The alternating direction method of multipliers (ADMM) was subsequently developed by \citet{Glowinski1975Approximation} and \citet{Gabay1976Dual}.
The popularity of ADMM increased significantly in the 2010s after its suitability for solving large-scale optimization problems was highlighted by \citet{boyd2011distributed}. Since then, highly scalable ADMM-based methods have been developed for signal processing \citep{mateos2010distributed, zhang2014asynchronous}, multi-agent robotics \citep{Saravanos-RSS-21, saravanos2023distributed_ddp, abdul2025scaling}, and power systems \citep{erseghe2014distributed, mhanna2018adaptive, xu2018admm}, among other domains.

Infeasibility identification for ADMM-based methods was a major concern, as it was established that the iterates of ADMM diverge if the problem is infeasible~\citep{Eckstein1992Douglas}.
SCS is an ADMM-based method for general conic programs that solves a homogeneous self-dual embedding to identify infeasibilities~\citep{odonoghue2016conic}.
The work by \citet{banjac2019infeasibility} showed that the \emph{differences} of the ADMM iterates are convergent and can be used to construct certificates of infeasibility for the original problem.
This technique is used by both OSQP~\citep{stellato2020osqp} and COSMO~\citep{garstka2021cosmo} to identify infeasible problems.

Unlike these approaches, FlexQP certifies infeasibility through its relaxation rather than as a separate detection mechanism, returning a minimum-violation solution directly.
This addresses a weakness of the existing methods, as even though they can successfully identify infeasible problems, they do not return a useful approximate solution to the original problem that can be used by the user.

\textbf{Sequential Quadratic Programming.}
Since its inception in 1963~\citep{Wilson1963Simplicial}, SQP has evolved into a general methodology for solving complex and highly constrained NLP problems.
A key challenge in SQP methods is that the QP subproblems formed by linearizing the constraints can become infeasible, even when the original NLP is feasible.
This issue is particularly relevant for learning-based settings, where the iterative training procedure often causes intermediate QP subproblems to become infeasible.

Several strategies have been developed to handle such infeasibilities; a recent unified perspective of these methods is provided by \citet{Kiessling2025Unified}.
One approach is to run a feasibility restoration phase to locate a feasible point when an infeasible QP is encountered~\citep{nocedal2006numerical}, such as in the FilterSQP method by \citet{Fletcher2002Nonlinear}.
However, these procedures require extensive tuning and are not scalable for batched settings.
A complementary approach is to modify the QP subproblem itself to ensure that a meaningful step can always be computed.
\citet{Burke1989Robust} proposed a robust QP subproblem for this purpose, and SNOPT's elastic mode~\citep{gill2005snopt} famously solves a relaxed QP when the original subproblem is infeasible.
Meanwhile, stabilized SQP methods~\citep{wright1998superlinear,hager1999stabilized,izmailov2012stabilized} construct modified QP subproblems using an augmented Lagrangian objective, providing a mechanism to handle the degeneracy of practical optimization.

At the 2011 Advances in Numerical Computation Workshop, \citet{Gill2011Active} noted that ``almost all practical optimization problems are degenerate,'' emphasizing the role that a robust QP solver plays in the field of NLP.
Rather than handling infeasibilities and degeneracies on a per-problem basis, FlexQP unifies their treatment within a single batched solver, making it well-suited for learning-based SQP methods and GPU implementations.
These considerations are particularly relevant for nonlinear MPC~\citep{diehl2009efficient,rawlings2020model} and safety-critical control~\citep{ames2019control,wabersich2021predictive}, where SQP is used in real-time control loops.
A recent overview of constrained second-order trajectory optimization methods, including SQP and augmented Lagrangian-based approaches, is provided by~\citet{aoyama2024second}.

\textbf{Learning to Optimize and Deep Unfolding.}
Learning to optimize or \emph{amortized optimization} has established itself as a key technique for improving decision making using data-driven techniques~\citep{chen2022learning,amos2023tutorial}.
Recent work has focused on understanding the representations~\citep{Liu2023Towards}, training dynamics~\citep{Metz2019Understanding,Scieur2022Curse}, and convergence~\citep{Sucker2025Generalization} of learning-to-optimize approaches.
These methods can be broadly classified into two main categories: model-free approaches that learn an optimizer from scratch, and model-based approaches that augment an existing algorithm with learnable components~\citep{Schlezinger2023ModelBased}.

Deep unfolding, also known as \emph{algorithm unrolling}~\citep{monga2021algorithm}, is a model-based learning-to-optimize approach that began with the seminal work by \citet{gregor2010ista} on the learned iterative shrinkage and thresholding algorithm (LISTA).
The LISTA framework has been extensively analyzed and improved over the years~\citep{Moreau2017Understanding,Chen2018Theoretical,liu2019alista,Chen2021Hyperparameter}, and deep unfolding has since been applied to various fields, including speech processing~\citep{Hershey2014Deep,Heigold2016EndToEnd}, compressive imaging~\citep{Zhang2018ISTA,Mardani2018Neural}, and video reconstruction~\citep{Luong2021Designing,deweerdt2024transformer}, among others~\citep{Adler2018Learned,Gupta2018CNN,Solomon2020Deep}.
Deep unfolding has also been effectively applied to ADMM-based problems and algorithms~\citep{Yang2016Deep,Ding2018Domain,Xie2019Differentiable,Yang2020ADMM,saravanos2025deep}.

Deep FlexQP is a deep-unfolded optimizer that learns hyperparameter policies from example problems of interest but maintains the mathematical structure from the underlying optimization process to inherit the guarantees and interpretability of the base optimizer.

\textbf{Learned Solvers for Quadratic Programming.}
Learning-based approaches for quadratic programming have received considerable attention in recent years.
Methods based on warm-starting determine a good initialization for a downstream solver using machine learning.
\citet{sambharya2023end} learn a warm-start network for a Douglas-Rachford splitting QP solver, and the method was extended to general fixed-point algorithms in \citet{Sambharya2024Learning}.
A related approach uses a deep-unfolded Douglas-Rachford splitting solver to warm-start SCS~\citep{xiong2025solving}.

A complementary class of approaches enhances the iterations of an optimization algorithm with learnable components. Deep FlexQP falls within this latter category of methods.
\citet{ichnowski2021accelerating} use reinforcement learning to learn dimension-agnostic policies for selecting the penalty parameters of OSQP. \citet{saravanos2025deep} presented deep-unfolded optimization architectures for centralized and distributed QP, drawing an analogy to closed-loop control in order to design learned feedback policies for the hyperparameters of centralized and distributed OSQP. These approaches demonstrated remarkable scalability, solving problems with tens or even hundreds of thousands of variables to satisfactory accuracy under limited computational budgets.
Similarly, \citet{Sambharya2024LearningAlgorithm} learn step-varying and steady-state hyperparameters for several methods, including OSQP and SCS.
We use the insight of dimension-independence from \citet{ichnowski2021accelerating} along with the closed-loop control analogy from \citet{saravanos2025deep} to learn component-wise LSTM policies for the hyperparameters of FlexQP.
Consistent with the findings of
\citet{Sambharya2024LearningAlgorithm}, the learned policies
quickly adjust hyperparameters within the first few iterations
before converging to a steady-state regime (see \cref{appendix:parameter_plots}).

\textbf{Performance Guarantees for Learned Optimizers.}
Providing formal performance guarantees for learned optimizers remains an open challenge.
Some asymptotic convergence guarantees have been derived using safeguarding mechanisms~\citep{Heaton2023Safeguarded} or by ensuring that the iterates do not deviate very far from a known convergent algorithm~\citep{Banert2024Accelerated, martin2025learning, martin2026learning}.
Other approaches ensure convergence through greedy objective descent \citep{fahy2024greedy} or provide deterministic worst-case certificates via the performance estimation problem (PEP) framework \citep{sambharya2025learning}.

Another class of guarantees relies on PAC-Bayes bounds~\citep{sambharya2025data,Sucker2025Learning,saravanos2025deep,Sucker2025Generalization}, which provide certificates of performance that hold with high probability.
However, the loss functions used in these works are based on binary objectives or linear ratios that saturate as the optimizer converges, yielding uninformative bounds when the learned optimizer achieves very low error.
To address this, we develop our PAC-Bayes bounds based on a log-scaled, residual-based loss function that better captures the performance of the optimizer as the error gets arbitrarily small (see \cref{sec:deep_unfolding:pac_bayes_bounds} and \cref{appendix:gen_bounds}).

\section{Proof of \cref{thm:minimizers}}
\label{appendix:minimizers_proof}
\allowdisplaybreaks

First, we state an equivalent representation of the relaxed QP:
\begin{lemma}
The relaxed QP \cref{eq:relaxed_qp} can equivalently be expressed as the following optimization:
\begin{equation} \label{eq:relaxed_qp_alt}
    \minimize_x \quad \phi(x; \mu_I, \mu_E) \defeq \frac{1}{2} x^\top P x + q^\top x + \mu_I \norm{(Gx - h)_+}_1 + \mu_E \norm{Ax - b}_1.
\end{equation}
\end{lemma}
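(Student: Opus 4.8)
The plan is to eliminate the slack variable $s$ by partial minimization, exploiting the fact that $s$ enters the objective $\phi(x, s; \mu_I, \mu_E)$ only through the single term $\mu_I \norm{Gx + s - h}_1$. Since the constraint $s \geq 0$ does not couple with $x$, I would first minimize over $s \geq 0$ with $x$ held fixed, and then minimize the resulting function of $x$ alone; this partial minimization is the one substantive step.

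To carry it out, I would use the separability of the $\ell_1$ norm across coordinates to write, for fixed $x$,
\begin{align*}
    \min_{s \geq 0} \norm{Gx + s - h}_1 = \sum_{i=1}^m \min_{s_i \geq 0} \big| (Gx - h)_i + s_i \big|.
\end{align*}
Each scalar subproblem is then solved by inspection. Writing $r_i = (Gx - h)_i$: if $r_i \geq 0$, the optimal choice is $s_i = 0$ with value $r_i$; if $r_i < 0$, taking $s_i = -r_i \geq 0$ achieves the value $0$. In either case the minimum equals $\max(r_i, 0) = (r_i)_+$, so that $\min_{s \geq 0} \norm{Gx + s - h}_1 = \norm{(Gx - h)_+}_1$, where $(\cdot)_+$ denotes the coordinate-wise positive part.

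Substituting this identity back into $\phi$ recovers exactly the objective of \cref{eq:relaxed_qp_alt}, since the remaining terms $\frac{1}{2} x^\top P x + q^\top x + \mu_E \norm{Ax - b}_1$ are independent of $s$. This shows the two problems have identical optimal values and the same set of optimal $x$, establishing the claimed equivalence. I do not expect any real obstacle here: the only point requiring care is justifying the reduction to partial minimization, which is immediate because the feasible region factors as a product over $x$ and $s \geq 0$, so that $\min_{x, s \geq 0} \phi(x, s; \mu_I, \mu_E) = \min_x \big( \min_{s \geq 0} \phi(x, s; \mu_I, \mu_E) \big)$.
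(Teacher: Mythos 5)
Your proof is correct and takes essentially the same route as the paper, whose entire proof is the one-line instruction to ``convert the optimization to the form without slack variables.'' You have simply made that step rigorous: the partial minimization over $s \geq 0$, the coordinate-wise reduction, and the identity $\min_{s_i \geq 0} |(Gx-h)_i + s_i| = \big((Gx-h)_i\big)_+$ are exactly the details the paper leaves implicit.
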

\begin{proof}
    Convert the optimization to the form without slack variables. This can be equivalently derived by directly relaxing the constraints of the original QP \cref{eq:qp} using $\ell_1$ penalties.
\end{proof}

Now, the sketch of the proof of \cref{thm:minimizers} is as follows. We will show for any $\mu_I \geq \mu_I^* = \norm{y_I^*}_\infty$ and for any $\mu_E \geq \mu_E^* = \norm{y_E^*}_\infty$ that
\begin{enumerate}
    \item if $x^*$ solves \cref{eq:qp}, then $x^*$ solves \cref{eq:relaxed_qp_alt}, and
    \item if $\hat{x}$ solves \cref{eq:relaxed_qp_alt} then $\hat{x}$ solves \cref{eq:qp}.
\end{enumerate}

\paragraph{Part 1:} Let $x^*$ solve \cref{eq:qp}. For any $x \in \R^n$ we have that
\begin{subequations}
\begin{align}
    \phi(x; \mu_I, \mu_E) &= \frac{1}{2} x^\top P x + q^\top x + \mu_I \norm{(Gx - h)_+}_1 + \mu_E \norm{Ax - b}_1
    \label{eq: phi ineqs - subeq 0}
    \\
    &= \frac{1}{2} x^\top P x + q^\top x + \mu_I \sum_{i = 1}^m (g_i^\top x - h_i)_+ + \mu_E \sum_{i = 1}^p |a_i^\top x - b_i|
    \label{eq: phi ineqs - subeq 1}
    \\
    &\geq \frac{1}{2} x^\top P x + q^\top x + \norm{y_I^*}_\infty \sum_{i = 1}^m (g_i^\top x - h_i)_+ + \norm{y_E^*}_\infty \sum_{i = 1}^p |a_i^\top x - b_i|
    \label{eq: phi ineqs - subeq 2}
    \\
    &\geq \frac{1}{2} x^\top P x + q^\top x + \sum_{i = 1}^m y_{I,i}^* (g_i^\top x - h_i)_+ + \sum_{i = 1}^p y_{E,i}^* |a_i^\top x - b_i|
    \label{eq: phi ineqs - subeq 3}
    \\
    &\geq \frac{1}{2} x^\top P x + q^\top x + \sum_{i = 1}^m y_{I,i}^* (g_i^\top x - h_i) + \sum_{i = 1}^p y_{E,i}^* (a_i^\top x - b_i)
    \label{eq: phi ineqs - subeq 4}
    \\
    &= \begin{aligned}[t]
        \frac{1}{2} x^\top P x + q^\top x & + \sum_{i = 1}^m y_{I,i}^* (g_i^\top x^* - h_i + g_i^\top(x - x^*)) \\
        & \hspace{2em} + \sum_{i = 1}^p y_{E,i}^* (a_i^\top x^* - b_i + a_i^\top(x - x^*))
    \end{aligned}
    \label{eq: phi ineqs - subeq 5}
    \\
    &= \frac{1}{2} x^\top P x + q^\top x + \sum_{i = 1}^m y_{I,i}^* g_i^\top(x - x^*) + \sum_{i = 1}^p y_{E,i}^* a_i^\top(x - x^*)
    \label{eq: phi ineqs - subeq 6}
    \\
    &= \frac{1}{2} x^\top P x + q^\top x + (G^\top y_I^* + A^\top y_E^*)^\top (x - x^*)
    \label{eq: phi ineqs - subeq 7}
    \\
    &= \frac{1}{2} x^\top P x + q^\top x - (Px^* + q)^\top (x - x^*)
    \label{eq: phi ineqs - subeq 8}
    \\
    &= \frac{1}{2} {x^*}^\top P x^* + q^\top x^* + \frac{1}{2} (x - x^*)^\top P (x - x^*)
    \\
    &\geq \frac{1}{2} {x^*}^\top P x^* + q^\top x^*
    \label{eq: phi ineqs - subeq 9}
    \\
    &= \frac{1}{2} {x^*}^\top P x^* + q^\top x^* + \mu_I \norm{(Gx^* - h)_+}_1 + \mu_E \norm{Ax^* - b}_1
    \label{eq: phi ineqs - subeq 10}
    \\
    &= \phi(x^*; \mu_I, \mu_E).
    \label{eq: phi ineqs - subeq 11}
\end{align}
\label{eq: phi ineqs}%
\end{subequations}
The step from \cref{eq: phi ineqs - subeq 1} to \cref{eq: phi ineqs - subeq 2} follows from the fact that $\mu_I \geq \norm{y_I^*}_\infty$ and $\mu_E \geq \norm{y_E^*}_\infty$, and \cref{eq: phi ineqs - subeq 3} follows from the definition of the infinity norm.
\cref{eq: phi ineqs - subeq 4} follows from the definition of ReLU and the absolute value and \cref{eq: phi ineqs - subeq 5} is implied by the linearity of the constraints.
Obtaining \cref{eq: phi ineqs - subeq 6} follows from the complementary slackness condition $y_{I,i}^* (g_i^\top x^* - h_i) = 0$ for all $i = 1, \ldots, m$, and feasibility $a_i^\top x^* - b_i = 0$ for all $i = 1, \ldots, p$.
\cref{eq: phi ineqs - subeq 8} comes from the stationary condition $Px^* + q + G^\top y_I^* + A^\top y_E^* = 0$.
Finally, obtaining \cref{eq: phi ineqs - subeq 9} follows from the positive semidefiniteness of $P$.

Thus, we have shown that $\phi(x^*; \mu_I, \mu_E) \leq \phi(x; \mu_I, \mu_E)$ for any $x$, which implies that $x^*$ minimizes $\phi(x; \mu_I, \mu_E)$ and therefore solves \cref{eq:relaxed_qp_alt}.

\paragraph{Part 2:} Next, let $\hat{x}$ solve \cref{eq:relaxed_qp_alt}. If $x^* \neq \hat{x}$ solves \cref{eq:qp}, then we have that
\begin{subequations}
\begin{align}
    \phi(\hat{x}; \mu_I, \mu_E) &= \frac{1}{2} \hat{x}^\top P \hat{x} + q^\top \hat{x} + \mu_I \norm{(G\hat{x} - h)_+}_1 + \mu_E \norm{A\hat{x} - b}_1 \\
    &\leq \frac{1}{2} {x^*}^\top P x^* + q^\top x^* + \mu_I \norm{(Gx^* - h)_+}_1 + \mu_E \norm{Ax^* - b}_1 \\
    &= \frac{1}{2} {x^*}^\top P x^* + q^\top x^*, \label{eq:xhat_minimizes_penalty_function}
\end{align}
\end{subequations}
which follows by the optimality of $\hat{x}$.
Now, assume that $\hat{x}$ is not feasible for \cref{eq:qp}. Then
\begin{subequations}
\begin{align}
    \frac{1}{2} \hat{x}^\top P \hat{x} + q^\top \hat{x} &\geq \frac{1}{2} {x^*}^\top P x^* + q^\top x^* + (Px^* + q)^\top (\hat{x} - x^*) \\
    &= \frac{1}{2} {x^*}^\top P x^* + q^\top x^* - \sum_{i = 1}^m y_{I,i}^* g_i^\top(\hat{x} - x^*) - \sum_{i = 1}^p y_{E,i}^* a_i^\top(\hat{x} - x^*) \\
    &= \begin{aligned}[t]
        \frac{1}{2} {x^*}^\top P x^* + q^\top x^* & - \sum_{i = 1}^m y_{I,i}^* (g_i^\top \hat{x} - h_i - (g_i^\top x^* - h_i)) \\
        & \hspace{2em} - \sum_{i = 1}^p y_{E,i}^* (a_i^\top \hat{x} - b_i - (a_i^\top x^* - b_i))
    \end{aligned} \\
    &= \frac{1}{2} {x^*}^\top P x^* + q^\top x^* - \sum_{i = 1}^m y_{I,i}^* (g_i^\top \hat{x} - h_i) - \sum_{i = 1}^p y_{E,i}^* (a_i^\top \hat{x} - b_i) \\
    &\geq \frac{1}{2} {x^*}^\top P x^* + q^\top x^* - \mu_I \sum_{i = 1}^m g_i^\top \hat{x} - h_i - \mu_E \sum_{i = 1}^p a_i^\top \hat{x} - b_i \\
    &\geq \frac{1}{2} {x^*}^\top P x^* + q^\top x^* - \mu_I \sum_{i = 1}^m (g_i^\top \hat{x} - h_i)_+ - \mu_E \sum_{i = 1}^p |a_i^\top \hat{x} - b_i|,
\end{align}
\end{subequations}
where we have used the same facts as in Part 1.
Rearranging, we have that
\begin{equation}
    \frac{1}{2} \hat{x}^\top P \hat{x} + q^\top \hat{x} + \mu_I \norm{(G\hat{x} - h)_+}_1 + \mu_E \norm{A\hat{x} - b}_1 \geq \frac{1}{2} {x^*}^\top P x^* + q^\top x^*,
\end{equation}
but either $\hat{x} = x^*$ or this contradicts the fact that $\hat{x}$ minimized $\phi(\cdot; \mu_I, \mu_E)$ in \cref{eq:xhat_minimizes_penalty_function}. Thus, $\hat{x}$ must be feasible for \cref{eq:qp}.
Therefore, by \cref{eq:xhat_minimizes_penalty_function} we have that
\begin{equation}
    \frac{1}{2} \hat{x}^\top P \hat{x} + q^\top \hat{x} \leq \frac{1}{2} {x^*}^\top P x^* + q^\top x^*,
\end{equation}
so $\hat{x}$ minimizes the quadratic objective and thus solves \cref{eq:qp}, completing the proof.

% \clearpage
\section{FlexQP --- First Block ADMM Update}
\label{appendix:eq_constrained_qp}

The most computationally demanding step of FlexQP is the first block update \cref{eq:FlexQP_admm_updates}, which is an equality-constrained QP:
\begin{subequations} \label{eq:FlexQP_eq_qp}
\begin{align}
    \minimize_{\tilde{x}, \tilde{s}, \tilde{z}_I, \tilde{z}_E} \quad & \begin{aligned}[t]
        & \frac{1}{2} \tilde{x}^\top P \tilde{x} + q^\top \tilde{x} + (\sigma_x/2)\norm{\tilde{x} - x^k}_2^2 + (\sigma_s/2)\norm{\tilde{s} - s^k + \sigma_s^{-1} w_s^k}_2^2 \\
        & \quad + (\rho_I/2)\norm{\tilde{z}_I - z_I^k + \rho_I^{-1} y_I^k}_2^2 + (\rho_E/2)\norm{\tilde{z}_E - z_E^k + \rho_E^{-1} y_E^k}_2^2,
    \end{aligned}  \label{eq:FlexQP_eq_qp:obj} \\
    \st \quad & \tilde{z}_I = G \tilde{x} + \tilde{s} - h, \quad \tilde{z}_E = A \tilde{x} - b.  \label{eq:FlexQP_eq_qp:constraints}
\end{align}
\end{subequations}
The optimality conditions for this QP are given by
\begin{subequations}  \label{eq:FlexQP_eq_qp_opt_conds}
\begin{align}
    P \tilde{x} + q + \sigma_x(\tilde{x} - x^k) + G^\top \tilde{\nu}_I + A^\top \tilde{\nu}_E &= 0,  \label{eq:FlexQP_eq_qp_opt_conds:x} \\
    \sigma_s(\tilde{s} - s^k) + w_s^k + \tilde{\nu}_I &= 0,  \label{eq:FlexQP_eq_qp_opt_conds:s} \\
    \rho_I(\tilde{z}_I - z_I^k) + y_I^k - \tilde{\nu}_I &= 0,  \label{eq:FlexQP_eq_qp_opt_conds:z_I} \\
    \rho_E(\tilde{z}_E - z_E^k) + y_E^k - \tilde{\nu}_E &= 0,  \label{eq:FlexQP_eq_qp_opt_conds:z_E} \\
    G \tilde{x} + \tilde{s} - \tilde{z}_I - h &= 0,  \label{eq:FlexQP_eq_qp_opt_conds:nu_I} \\
    A \tilde{x} - \tilde{z}_E - b &= 0,  \label{eq:FlexQP_eq_qp_opt_conds:nu_E}
\end{align}
\end{subequations}
where $\tilde{\nu}_I \in \sR^m$ and $\tilde{\nu}_E \in \sR^p$ are the Lagrange multipliers for the equality constraints \cref{eq:FlexQP_eq_qp:constraints}. Solving this QP as-is would be expensive since it requires solving a linear system of size $n + 3m + 2p$. However, we can eliminate $\tilde{s}$, $\tilde{z}_I$, and $\tilde{z}_E$ using \cref{eq:FlexQP_eq_qp_opt_conds:s,eq:FlexQP_eq_qp_opt_conds:z_I,eq:FlexQP_eq_qp_opt_conds:z_E} above, so the linear system simplifies to
\begin{equation} \label{eq:FlexQP_linear_system_direct}
    \begin{bmatrix}
        P + \sigma_x I & G^\top & A^\top \\
        G & -(\sigma_s^{-1} + \rho_I^{-1})I & 0 \\
        A & 0 & -\rho_E^{-1} I
    \end{bmatrix} \begin{bmatrix}
        \tilde{x} \\ \tilde{\nu}_I \\ \tilde{\nu}_E
    \end{bmatrix} = \begin{bmatrix}
        \sigma_x x^k - q \\
        h - s^k + \sigma_s^{-1} w_s^k + z_I^k - \rho_I^{-1} y_I^k \\
        b + z_E^k - \rho_E^{-1} y_E^k
    \end{bmatrix},
\end{equation}
with the eliminated variables recoverable using
\begin{subequations}
\begin{align}
    \tilde{s} &= s^k - \sigma_s^{-1} w_s^k - \sigma_s^{-1} \tilde{\nu}_I, \\
    \tilde{z}_I &= z_I^k - \rho_I^{-1} y_I^k + \rho_I^{-1} \tilde{\nu}_I, \\
    \tilde{z}_E &= z_E^k - \rho_E^{-1} y_E^k + \rho_E^{-1} \tilde{\nu}_E.
\end{align}
\end{subequations}
The coefficient matrix in the linear system \cref{eq:FlexQP_linear_system_direct} is always full rank due to the positive parameters $\sigma_x$, $\sigma_s$, $\rho_I$, and $\rho_E$ introduced through the ADMM splitting. This linear system can be solved using a direct method such as an $LDL^\top$ factorization requiring $O((n + m + p)^3)$ time, the same as OSQP using the direct method. On the other hand, for large-scale QPs, i.e., when $n + m + p$ is very large, factoring this matrix can be prohibitively expensive. In this case, we can use an indirect method to solve the reduced system
\begin{equation} \label{eq:FlexQP_linear_system_indirect}
    \begin{aligned}[t]
        ( P + \sigma_x I + \bar{G}^\top G + \bar{A}^\top A ) \tilde{x} ={}&  \sigma_x x^k - q + \bar{G}^\top (h - s^k + \sigma_s^{-1} w_s^k + z_I^k - \rho_I^{-1} y_I^k) \\
        & + \bar{A}^\top (b + z_E^k - \rho_E^{-1} y_E^k),
    \end{aligned}
\end{equation}
where $\bar{G} = (\sigma_s^{-1} + \rho_I^{-1})^{-1} G$ and $\bar{A} = \rho_E A$.
This can be obtained by eliminating $\tilde{\nu}_I$ and $\tilde{\nu}_E$ from the linear system \cref{eq:FlexQP_linear_system_direct}. These variables are recoverable using
\begin{subequations}
\begin{align}
    \tilde{\nu}_I &= (\sigma_s^{-1} + \rho_I^{-1})^{-1} (G \tilde{x} + s^k - \sigma_s^{-1} w_s^k - z_I^k + \rho_I^{-1} y_I^k - h), \\
    \tilde{\nu}_E &= \rho_E(A\tilde{x} - z_E^k + \rho_E^{-1} y_E^k - b).
\end{align}
\end{subequations}
The coefficient matrix in \cref{eq:FlexQP_linear_system_indirect} is always positive definite, so the linear system can be solved using an iterative algorithm such as the conjugate gradient (CG) method. The linear system is of size $n$, matching the complexity of OSQP using the indirect method. In this work, we consider a supervised learning setting where we will need to compute derivatives of the solution $\tilde{x}$ with respect to the parameters $\sigma_x$, $\rho_I$, etc. While each iteration of the CG method is very fast, it can require many iterations to converge to a low-error solution. It would be very inefficient to backpropagate through all these iterations of the CG method, the main issue being the high memory cost since the entire compute graph needs to be stored and then differentiated through during the backward pass. We instead adopt the approach from \citet[Theorem 2]{saravanos2025deep} using differentiable optimization in order to compute these derivatives in a more efficient manner. In practice, this means we can compute the derivatives by solving a new linear system with the same coefficient matrix but different right-hand side during the backward pass.

\section{FlexQP Algorithm}
\label{appendix:algorithm}

\begin{algorithm}[H]
\SetKwInOut{Input}{Input}
\SetKwInOut{Output}{Output}
\caption{FlexQP} \label{alg:FlexQP}
\Input{Initialization $x^0, s^0, z_I^0, z_E^0, w_s^0, y_I^0, y_E^0$ and parameters $\mu_I, \mu_E, \sigma_x, \sigma_s, \rho_I, \rho_E > 0$}
\Output{Solution $x^*, y_I^*, y_E^*$}
\While{termination criterion not satisfied}{
    $\tilde{x}^{k + 1}, \tilde{\nu}_I^{k + 1}, \tilde{\nu}_E^{k + 1} \gets$ Solve the linear system \cref{eq:FlexQP_linear_system_direct} \\
    $\tilde{s}^{k + 1} = s^k - \sigma_s^{-1} w_s^k - \sigma_s^{-1} \tilde{\nu}_I^{k + 1}$ \\
    $\tilde{z}_I^{k + 1} = z_I^k - \rho_I^{-1} y_I^k + \rho_I^{-1} \tilde{\nu}_I^{k + 1}$ \\
    $\tilde{z}_E^{k + 1} = z_E^k - \rho_E^{-1} y_E^k + \rho_E^{-1} \tilde{\nu}_E^{k + 1}$ \\
    $x^{k + 1} = \alpha \tilde{x}^{k + 1} + (1 - \alpha) x^k$ \\
    $s^{k + 1} = \left( \alpha \tilde{s}^{k + 1} + (1 - \alpha) s^k + \sigma_s^{-1} w_s^k \right)_+$ \\
    $z_I^{k + 1} = S_{\mu_I/\rho_I} \left( \alpha \tilde{z}_I^{k + 1} + (1 - \alpha) z_I^k + \rho_I^{-1} y_I^k \right)$ \\
    $z_E^{k + 1} = S_{\mu_E/\rho_E} \left( \alpha \tilde{z}_E^{k + 1} + (1 - \alpha) z_E^k + \rho_E^{-1} y_E^k \right)$ \\
    $w_s^{k + 1} = w_s^k + \sigma_s (\alpha \tilde{s}^{k + 1} + (1 - \alpha) s^k - s^{k + 1})$ \\
    $y_I^{k + 1} = y_I^k + \rho_I (\alpha \tilde{z}_I^{k + 1} + (1 - \alpha) z_I^k - z_I^{k + 1})$ \\
    $y_E^{k + 1} = y_E^k + \rho_E (\alpha \tilde{z}_E^{k + 1} + (1 - \alpha) z_E^k - z_E^{k + 1})$
}
\end{algorithm}

\section{Proof of \cref{theorem:convergence}}
\label{appendix:proof_of_convergence}

To show that a saddle point of \cref{eq:flexqp_lagrangian} exists, it suffices to show that the relative interior of the domains of $f$ and $g$ are non-empty.
This is a form of constraint qualification guaranteeing strong duality~\citep[Theorem 16.4]{rockafellar1970convex}.
The relative interior of $f$ is simply the feasible set $\{ (x, s, z_I, z_E) : z_I = Gx + s - h, z_E = Ax - b \}$, which is non-empty (pick any $x, s$ and then let $z_I = Gx + s - h$ and $z_E = Ax - b$).
Meanwhile, the relative interior of $g$ is given by the set $\{ (x, s, z_I, z_E) : s > 0\}$ due to the constraint $s \geq 0$.
Combining, this shows that $\relint(\dom f) \cap \relint(\dom g) \neq \emptyset$, so strong duality holds and a saddle point exists by \citet[Theorem 37.3]{rockafellar1970convex}.

Furthermore, as the objective $f$ and $g$ are closed, proper, and convex, by \citet[\S 3.2]{boyd2011distributed}, \cref{alg:FlexQP} converges. Namely, we have that the ADMM primal residuals $\tilde{\boldsymbol{\zeta}}^k \to 0$ and ADMM dual residuals $\bar{\boldsymbol{\zeta}}^k \to 0$ as $k \to \infty$, where
\begin{equation} \label{eq:admm_residuals}
    \tilde{\boldsymbol{\zeta}}^k = \begin{bmatrix}
        \tilde{\zeta}_x^k \\
        \tilde{\zeta}_s^k \\
        \tilde{\zeta}_I^k \\
        \tilde{\zeta}_E^k
    \end{bmatrix} = \begin{bmatrix}
        \tilde{x}^k - x^k \\
        \tilde{s}^k - s^k \\
        \tilde{z}_I^k - z_I^k \\
        \tilde{z}_E^k - z_E^k 
    \end{bmatrix}, \quad \bar{\boldsymbol{\zeta}}^k = \begin{bmatrix}
        \bar{\zeta}_x^k \\
        \bar{\zeta}_s^k \\
        \bar{\zeta}_I^k \\
        \bar{\zeta}_E^k
    \end{bmatrix} = \begin{bmatrix}
        x^{k - 1} - x^k \\
        s^{k - 1} - s^k \\
        z_I^{k - 1} - z_I^k \\
        z_E^{k - 1} - z_E^k 
    \end{bmatrix}.
\end{equation}

Furthermore, we also have that iterates $\tilde{\boldsymbol{x}}^k \to \hat{\tilde{\boldsymbol{x}}}$, $\boldsymbol{x}^k \to \hat{\boldsymbol{x}}$, and $\boldsymbol{y}^k \to \hat{\boldsymbol{y}}$ as $k \to \infty$.

\section{Proof of \cref{thm:FlexQP_solution}}
\label{appendix:proof_of_second_theorem}

The proof follows from the definition of the soft thresholding operator. First, we consider the $z_I$ and $z_E$ updates as well as the dual variable updates for $y_I$ and $y_E$ from \cref{eq:FlexQP_admm_updates}. Assume w.l.o.g. that $\alpha = 1$. These updates have the general form:
\begin{subequations}
\begin{align}
    z^{k + 1} = S_{\mu / \rho}\left( \tilde{z}^{k + 1} + y^k / \rho \right), \\
    y^{k + 1} = y^k + \rho (\tilde{z}^{k + 1} - z^{k + 1}). \label{eq:general_y_update} 
\end{align}
\end{subequations}
Now, there are three cases the consider based on the output of the soft thresholding operation:
\begin{enumerate}
    \item \textbf{Positive constraint violation:} If $\tilde{z}^{k + 1} + y^k / \rho > \mu / \rho$, then $z^{k + 1} = \tilde{z}^{k + 1} + y^k / \rho - \mu / \rho$. Substituting into \cref{eq:general_y_update} yields $y^{k + 1} = \mu$.
    \item \textbf{No constraint violation:} If $|\tilde{z}^{k + 1} + y^k / \rho| \leq \mu / \rho$, then $z^{k + 1} = 0$. This further implies $\rho |\tilde{z}^{k + 1} + y^k / \rho| \leq \mu$ and by \cref{eq:general_y_update} this implies $|y^{k + 1}| \leq \mu$.
    \item \textbf{Negative constraint violation:} If $\tilde{z}^{k + 1} + y^k / \rho < -\mu / \rho$, then $z^{k + 1} = \tilde{z}^{k + 1} + y^k / \rho + \mu / \rho$. Substituting into \cref{eq:general_y_update} yields $y^{k + 1} = -\mu$.
\end{enumerate}
Combining these three cases, we have that $|y^{k + 1}| \leq \mu$, for any $\tilde{z}^{k + 1}, y^k$ and therefore $|\hat{y}| \leq \mu$. This proves the first and last statement of the theorem. Applying \cref{thm:minimizers} shows the second statement.

% \clearpage
\section{Deep FlexQP Policy Parameterization}
\label{appendix:residuals}

The residuals for \cref{eq:relaxed_qp_simplified} are given by
\begin{subequations} \label{eq:relaxed_qp_residuals}
\begin{align}
    \zeta_\text{dual}^k &= Px^k + q + G^\top y_I^k + A^\top y_E^k, \\
    \zeta_I^k &= Gx^k + s^k - h - z_I^k, \\
    \zeta_E^k &= Ax^k - b - z_E^k.
\end{align}
\end{subequations}

The ADMM residuals for \cref{eq:FlexQP} are defined in \cref{eq:admm_residuals}.
We ignore the residuals corresponding to $x$ since it is unconstrained in the second ADMM block (so the primal residual is not very meaningful) and we have already captured the optimality through \cref{eq:relaxed_qp_residuals}.

The policy $\pi_I: \sR^{10} \to \sR_+^3$ is given by
\begin{equation}
    \mu_I, \sigma_s, \rho_I = \pi_I(s, z_I, w_s, y_I, \lVert \zeta_\text{dual} \rVert_\infty, \zeta_I, \bar{\zeta}_s, \bar{\zeta}_I, \tilde{\zeta}_s, \tilde{\zeta}_I),
\end{equation}
where we have dropped the indices by constraint $i$ and iteration $k$ for clarity.

The policy $\pi_E: \sR^6 \to \sR_+^2$ is given by
\begin{equation}
    \mu_E, \rho_E = \pi_E(z_E, y_E, \lVert \zeta_\text{dual} \rVert_\infty, \zeta_E, \bar{\zeta}_E, \tilde{\zeta}_E).
\end{equation}

The policy $\pi_\alpha: \sR^9 \to (0, 2)$ is given by
\begin{equation}
    \alpha = \pi_\alpha(\lVert \zeta_\text{dual} \rVert, \lVert \zeta_I \rVert, \lVert \zeta_E \rVert, \lVert \bar{\zeta}_s \rVert, \lVert \bar{\zeta}_I \rVert, \lVert \bar{\zeta}_E \rVert, \lVert \tilde{\zeta}_s \rVert, \lVert \tilde{\zeta}_I \rVert, \lVert \tilde{\zeta}_E \rVert),
\end{equation}
where the norm used is the infinity norm.

We consider two policy parameterizations: multilayer perceptrons (MLPs) and LSTMs (see \cref{appendix:ablation_lstm} for experimental comparison). Following \citet{saravanos2025deep}, MLP policies are small networks with two hidden layers of sizes [32, 32]. LSTM policies use a hidden size of 32 followed by an MLP with hidden layers [32, 32] for prediction. We use sigmoid for all activation functions, which we find are much more stable than ReLU activations, most likely due to the autoregressive nature of deep unfolding.
Computationally, we log-scale any small positive inputs like the infinity norms of the residuals. Following \citet{ichnowski2021accelerating}, we also predict log-transformed values $\log \mu_I$, $\log\rho_E$, etc. and then apply an exponential function so that it is easier to predict parameters across a wide scale of values. We then clamp the parameters (besides $\alpha$) to the range $[10^{-6}, 10^6]$; $\alpha \in (0, 2)$ is enforced using a scaled sigmoid function.

% \clearpage
\section{Further Details on QP Problem Classes}
\label{appendix:problem_classes}

A summary of the problem sizes and training parameters of the different classes is presented in \cref{table:qp_problem_sizes}. For the small- to medium-scale QPs, we train all models for 500 epochs and evaluate using 1000 test samples. More training samples are used for random QPs following the setup by~\citet{saravanos2025deep} since the shared structure between these QPs is less clear and therefore harder to learn.
As described in the main text, for the large-scale problems, we fine-tune the models trained on the smaller scale problems on 100 large-scale problems for 5 epochs, and test on 100 new problems.

\begin{table}[h]
\centering
\caption{QP problem sizes and number of samples used for training.}
\label{table:qp_problem_sizes}
\begin{tabular}{c|c|c|c|c}
Problem Class & $n$ & $m$ & $p$ & Training Samples \\
\hline
Random QPs & 50 & 40 & 0 & 2000 \\
Random QPs with Equalities & 50 & 25 & 20 & 2000 \\
Portfolio Optimization & 275 & 250 & 26 & 500 \\
Support Vector Machine & 210 & 400 & 0 & 500 \\
LASSO & 510 & 10 & 500 & 500 \\
Huber Fitting & 310 & 200 & 100 & 500 \\
Random Linear OCPs & 128 & 256 & 88 & 500 \\
Double Integrator & 62 & 124 & 42 & 500 \\
Oscillating Masses & 162 & 324 & 132 & 500 \\
Portfolio Optimization (Large-Scale) & 10100 & 10000 & 101 & 100 \\
Support Vector Machine (Large-Scale) & 10100 & 20000 & 0 & 100 \\
Car with Obstacles (SQP) & 253 & 455 & 153 & 500 \\
Quadrotor (SQP) & 812 & 400 & 612 & 500 \\
Car Safety Filter (SQP) & 253 & 50 & 153 & 500
\end{tabular}
\end{table}

\subsection{Random QPs}

The first type of problems we study are random QPs of the form~\cref{eq:qp}. These are helpful as we can freely adjust the number of constraints as well as the sparsity of the problem directly in order to benchmark the optimizers under different operating conditions.

\textbf{Problem Instances:} We adopt the problem generation procedure from \citet{saravanos2025deep}, where $P = M^\top M + \alpha I$ with $\alpha = 1$ and all elements of $M$, $q$, $G$, and $A$ are standard normal distributed, i.e., each element $M_{ij}, q_i, G_{ij}, A_{ij} \sim \mathcal{N}(0, 1)$. The vectors $h$ and $b$ are generated using $h = G \xi$ and $b = A \zeta$ with $\xi, \zeta$ standard normal vectors.

We consider two classes of random QPs. The first class, \textbf{Random QPs}, contains only inequality constraints generated by setting the problem dimensions as $n = 50$, $m = 40$, and $p = 0$. The second class, \textbf{Random QPs with Equalities} contains a mix of inequality and equality constraints, and is generated using $n = 50$, $m = 25$, and $p = 20$.

\subsection{Portfolio Optimization}
\label{appendix:problem_classes:portfolio_opt}

\textbf{Portfolio Optimization} is a foundational problem in finance where the goal is to maximize the risk-adjusted return of a group of assets~\citep{markowitz1952portfolio,boyd2013performance,boyd2017multi}. This can be represented as the following QP~\citep{boyd2004convex,stellato2020osqp}:
\begin{subequations}
\begin{align}
    \maximize_x \quad            & \mu^\top x - \gamma (x^\top \Sigma x), \\
    \st \quad & \mathbf{1}^\top x = 1, \\
                            & x \geq 0,
\end{align}
\end{subequations}
where $x \in \R^n$ is the portfolio, $\mu \in \R^n$ is the expected returns, $\gamma > 0$ is the risk aversion parameter, and $\Sigma \in \mathbb{S}_+^n$ is the risk model covariance.

\textbf{QP Representation:} We assume that $\Sigma = F F^\top + D$ where $F \in \R^{n \times k}$ is the rank-$k$ factor loading matrix with $k < n$ and $D \in \R^{n \times n}$ is the diagonal matrix specifying the asset-specific risk. Using this assumption, the optimization problem can be converted into a more efficient QP representation:
\begin{subequations}
\begin{align}
    \minimize_{x, y} \quad & x^\top D x + y^\top y - \gamma^{-1} \mu^\top x, \\
    \st \quad & y = F^\top x, \\
                            & \mathbf{1}^\top x = 1, \\
                            & x \geq 0.
\end{align}
\end{subequations}
This new QP has $n + k$ decision variables, $k + 1$ equality constraints, and $n$ inequality constraints.

\textbf{Problem Instances:} For the medium-scale problems, we use the problem generation described in \citet{saravanos2025deep}, setting $n = 250$, $k = 25$, and $\gamma = 1.0$.
The large-scale problems are generated using $n = 10000$ assets, $k = 100$ factors, and $\gamma = 1.0$.
The expected returns $\mu$ are sampled using $\mu_i \sim \mathcal{N}(0, 1)$.
The factor loading matrix $F$ has 50\% non-zero elements sampled through $F_{ij} \sim \mathcal{N}(0, 1)$.
The diagonal elements of $D$ are generated uniformly as $D_{ii} \sim \mathcal{U}(0, \sqrt{k})$.

\subsection{Support Vector Machines}
\label{appendix:problem_classes:svm}

\textbf{Support Vector Machines} (SVMs) are a classical machine learning method where the goal is to find a linear classifier that best separates two sets of points~\citep{cortes1995support}:
\begin{equation} \label{eq:svm}
    \minimize_x x^\top x + \lambda \sum_{i = 1}^m \max(0, b_i a_i^\top x + 1),
\end{equation}
where $\lambda > 0$, $b_i \in \{-1, 1\}$ is the label, and $a_i \in \R^n$ is the set of features for point $i$.

\textbf{QP representation:} The SVM problem~\cref{eq:svm} can be converted into an equivalent QP representation~\citep{stellato2020osqp}:
\begin{subequations}
\begin{align}
    \minimize_{x, t} \quad       & x^\top x + \lambda \mathbf{1}^\top t, \\
    \st \quad & t \geq \diag(b) A x + 1, \\
                            & t \geq 0.
\end{align}
\end{subequations}
This QP has $n + m$ decision variables and $2m$ inequality constraints.

\textbf{Problem Instances:} We generate medium-scale problems using the rules from \citet{stellato2020osqp} with $n = 10$ features, $m = 200$ data points, and $\lambda = 1$.
Large-scale problems are generated using $n = 100$ features, $m = 10000$ data points, and $\lambda = 1$.
The labels $b$ are chosen using
\begin{equation}
    b_i = \begin{cases}
        +1 & \text{if} \ i \leq m / 2, \\
        -1 & \text{otherwise},
    \end{cases}
\end{equation}
and the elements of $A$ are chosen such that
\begin{equation}
    A_{ij} \sim \begin{cases}
        \mathcal{N}(+1/n, 1/n) & \text{if} \ i \leq m / 2, \\
        \mathcal{N}(-1/n, 1/n) & \text{otherwise}.
    \end{cases}
\end{equation}

\subsection{LASSO}

\textbf{LASSO} (least absolute shrinkage and selection operator) is a fundamental problem in statistics and machine learning~\citep{tibshirani1996regression,candes2008enhancing}. The objective is to select sparse coefficients of a linear model that best match the given observations:
\begin{equation}
    \minimize_x \norm{Ax - b}_2^2 + \lambda \norm{x}_1,
\end{equation}
where $x \in \R^n$, $A \in \R^{m \times n}$ is the data matrix, $b \in \R^m$ are the observations, and $\lambda > 0$ is the weighting parameter.

\textbf{QP Representation:} LASSO can be represented as a QP by introducing two extra decision variables $y \in \mathbb{R}^m$ and $t \in \mathbb{R}^n$ which help simplify the objective~\citep{stellato2020osqp}:
\begin{subequations}
\begin{align}
    \minimize_{x, y, t} \quad    & y^\top y + \lambda \mathbf{1}^\top t, \\
    \st \quad & y = Ax - b, \\
                            & -t \leq x \leq t.
\end{align}
\end{subequations}

\textbf{Problem Instances:} We use the data generation procedure from \citep{stellato2020osqp}, where $A$ has 15\% non-zero normally-distributed elements $A_{ij} \sim \mathcal{N}(0, 1)$ and $b$ is generated through $b = Av + \epsilon$ with
\begin{equation}
    v_i \sim \begin{cases}
        0 & \text{with probability} \ p = 0.5, \\
        \mathcal{N}(0, 1/n) & \text{otherwise},
    \end{cases}
\end{equation}
and $\epsilon_i \sim \mathcal{N}(0, 1)$. The parameter $\lambda$ is chosen as $\lambda = (1/5)\lVert A^\top b \rVert_\infty$.

\subsection{Huber Fitting}

\textbf{Huber Fitting} is a robust least squares problem where the goal is to perform a linear regression with the assumption that outliers are present in the data~\citep{huber1964robust,huber1981robust}:
\begin{equation}
    \minimize_{x} \sum_{i = 1}^m \phi_\text{hub} (a_i^\top x - b_i),
\end{equation}
where the penalty function $\phi_\text{hub}$ penalizes the residuals linearly when they are large and quadratically when they are small:
\begin{equation}
    \phi_\text{hub}(u) = \begin{cases}
        u^2 & \text{if} \ |u| \leq
        \delta, \\
        \delta(2|u| - \delta) & \text{if} \ |u| > \delta,
    \end{cases}
\end{equation}
with $\delta > 0$ representing the slope of the linear term.

\textbf{QP Representation:} This robust least squares problem can be represented in the following QP form~\citep{stellato2020osqp}:
\begin{subequations}
\begin{align}
    \minimize_{x, u, r, s} \quad & u^\top u + 2 \delta \mathbf{1}^\top (r + s), \\
    \st \quad & Ax - b - u = r - s, \\
    & r, s \geq 0.
\end{align}
\end{subequations}
This QP has $n + 3m$ decision variables, $2m$ inequalities, and $m$ equalities.

\textbf{Problem Instances:}
We follow \citet{stellato2020osqp} and generate $A$ with 15\% nonzero elements with $A_{ij} \sim \mathcal{N}(0, 1)$ and set $b = Av + \epsilon$ where
\begin{equation}
    \epsilon_i = \begin{cases}
        \mathcal{N}(0, 1/4) & \text{with probability } p = 0.95, \\
        \mathcal{U}(0, 10) & \text{otherwise}.
    \end{cases}
\end{equation}
We let $\delta = 1$ and choose the problem dimensions as $n=10$ features and $m=10n=100$ datapoints.

\subsection{Linear Optimal Control}
The goal in linear optimal control is to stabilize the system to the origin subject to dynamical constraints as well as polyhedral constraints on the states and controls. This results in QPs of the form
\begin{subequations}
\begin{align}
    \minimize_{x, u} \quad & \sum_{t = 0}^{T - 1} x_t^\top Q x_t + u_t^\top R u_t + x_T^\top Q_T x_T, \\
    \st \quad & x_{t + 1} = A_d x_t + B_d u_t, \\
    & A_u u_t \leq b_u, \\
    & A_x x_t \leq b_x, \\
    & x_0 = \bar{x}_0,
\end{align}
\end{subequations}
where $T > 0$ is the time horizon, $Q \in \mathbb{S}_+^{n_x}$ is the running state cost matrix, $R \in \mathbb{S}_{++}^{n_u}$ is the control cost matrix, $Q_T \in \mathbb{S}_+^{n_x}$ is the terminal state cost matrix, $A_d \in \R^{n_x \times n_x}$ and $B_d \in \R^{n_x \times n_u}$ define the dynamics of the system, $A_u \in \R^{m_u \times n_u}$ and $b_u \in \R^{m_u}$ define the input constraints, $A_x \in \R^{m_x \times n_x}$ and $b_x \in \R^{m_x}$ define the state constraints, and $\bar{x}_0 \in \R^{n_x}$ is the initial condition.

We study three classes of linear optimal control problems (OCPs). The first, \textbf{Random Linear OCPs}, consists of randomly generated stabilizable dynamics along with random costs, constraints, and initial conditions. The second and third classes, \textbf{Double Integrator} and \textbf{Oscillating Masses}, are adapted from \citet{chen2022large} and contain dynamics with true physical interpretations. The randomness in these problems is given by sampling varying initial conditions for the systems as in \citet{saravanos2025deep}.

\subsubsection{Random Linear OCPs}
We use the problem generation procedure similar to that in \citet{stellato2020osqp}. We set the state dimension $n_x = 8$ and $n_u = n_x/2 = 4$. The dynamics are generated by $A_d = X^{-1}AX$, where $A = \diag(a)\in\sR^{n_x\times n_x}$ such that $a_i\sim \gU(-1, 1)$ and $X\in\sR^{n_x\times n_x}$ with elements generated by $X_{ij} \sim \gN(0, 1)$, and $B_d\in\sR^{n_x\times n_u}$ with $(B_{d})_{ij} \sim \gN(0, 1)$.

The running state cost $Q\in \sS_+^{n_x}$ is generated by $Q = \diag(q)$ where each element of the sparse vector $q$ is generated by
\begin{equation}
    q_i \sim \begin{cases}
        \gU(0, 10) & \text{with probability } p =0.7,\\
        0 & \text{otherwise},
    \end{cases}
\end{equation}
so that $q$ has $70\%$ nonzero values. We fix the control cost $R = 0.1I_u$ and the terminal cost $Q_T$ is determined by solving the discrete algebraic Riccati for the optimal cost of a linear quadratic regulator applied to $A, B, Q$, and $R$. The state and control constraints are generated by
\begin{subequations}
    \begin{align}
        A_x = \begin{bmatrix} I_x \\ -I_x\end{bmatrix}, \quad b_x = \begin{bmatrix} x^{\text{bound}} \\ -x^{\text{bound}} \end{bmatrix}, \hquad \text{ where }x_i^{\text{bound}}\sim \gU(1, 2), \\
        A_u = \begin{bmatrix} I_u \\ -I_u\end{bmatrix}, \quad b_u = \begin{bmatrix} u^{\text{bound}} \\ -u^{\text{bound}} \end{bmatrix}, \hquad \text{ where }u_i^{\text{bound}}\sim \gU(0, 0.1).
    \end{align}
\end{subequations}
Note that we use $I_x$ and $I_u$ as a shorthand for $I_{n_x}$ and $I_{n_u}$. Finally, we sample the initial state from $\bar{x}_{0}\sim \gU(-0.5x^{\text{bound}}, 0.5x^{\text{bound}})$.
\subsubsection{Double Integrator}
For the double integrator, adapted from \citet{chen2022large}, we have $n_x = 2$, $n_u=1$, and $T=20$ timesteps. The dynamics are fixed with
\begin{equation}
    A_d = \begin{bmatrix}
    1 & 1 \\
    0 & 1
\end{bmatrix}, B_d = \begin{bmatrix}
    0.5 \\ 0.1
\end{bmatrix}.
\end{equation}
We use cost matrices $Q = Q_T = I_x$ and $R = 1.0$. The state and control constraints are given by
\begin{equation}
        A_x = \begin{bmatrix}
            I_x \\ -I_x
        \end{bmatrix}, b_x = \begin{bmatrix}
            5 \\ 1 \\ 5 \\ 1
        \end{bmatrix},
        A_u = \begin{bmatrix}
            1 \\ -1
        \end{bmatrix}, b_u = \begin{bmatrix}
            0.1 \\ 0.1
        \end{bmatrix}.
\end{equation}
The initial state is sampled from $\bar{x}_0 \sim \mathcal{U}\left(\begin{bmatrix} -1 \\ -0.3 \end{bmatrix}, \begin{bmatrix} 1 \\ 0.3 \end{bmatrix}\right).$

\subsubsection{Oscillating Masses}
For the oscillating masses problem, we have $n_x=12$, $n_u=3$, $T = 10$. For this problem, the discrete-time dynamics matrices $A_d$ and $B_d$ are obtained through the Euler discretization of the continuous-time dynamics of the oscillating masses system, namely
\begin{equation}
    A_d = I_x + A_c\Delta t, \quad B_d = B_c\Delta t,
\end{equation}
where $\Delta t = 0.5$. The matrices $A_c\in\sR^{n_x\times n_x}$ and $B_c\in\sR^{n_x\times n_u}$ define the continuous-time dynamics and are given by
\begin{equation}
    A_c = \begin{bmatrix}
        0_{6\times 6} & I_6 \\
        aI_6 + c(L_6 + L_6^\top) & bI_6 + d(L_6 + L_6^\top)
    \end{bmatrix}, B_c = \begin{bmatrix}
        0_{6\times 3} \\ F
    \end{bmatrix},
\end{equation}
where $c = 1$, $d=0.1$, $a=-2c$, $b = 2$, $0_{m\times n}$ is the zero matrix in $\sR^{m\times n}$, $L_n$ is the lower shift matrix in $\sR^{n\times n}$, and $F = \begin{bmatrix} e_1 & -e_1 & e_2 & e_3 & -e_2 & e_3 \end{bmatrix}^\top$, where $e_1, e_2,$ and $e_3$ are the standard basis vectors in $\sR^3$. We use cost matrices $Q= Q_T=I_x$ and $R = I_u$. The state and control constraints are given by
\begin{equation}
     A_x = \begin{bmatrix}
            I_x \\ -I_x
        \end{bmatrix}, \quad b_x = 4\cdot \bm{1}_x, \quad
        A_u = \begin{bmatrix}
            I_u \\ -I_u
        \end{bmatrix}, \quad b_u =  0.5 \cdot \bm1_u.
\end{equation}
Finally, we sample the initial state from $\bar{x}_0\sim \gU(-\bm1_x, \bm1_x)$.

\clearpage
\section{Nonconvex Nonlinear Programming using SQP}
\label{appendix:nonlinear_sqp}

\subsection{Nonlinear Optimal Control}
We consider nonlinear constrained optimal control problems of the following form:
\begin{subequations} \label{eq:nonlinear_optimal_control}
\begin{align}
    \minimize_{x, u} \quad & \sum_{t = 0}^{T - 1} \ell(x_t, u_t) + \phi(x_T), \\
    \st \quad & x_{t + 1} = F(x_t, u_t), \quad \forall t = 0, \ldots, T - 1, \\
    & x_0 = \bar{x}_0, \\
    & g(x_t) \leq 0, \quad \forall t = 0, \ldots, T, \\
    & h(u_t) \leq 0, \quad \forall t = 0, \ldots, T - 1,
\end{align}
\end{subequations}
where $x_{t} \in \sR^n$ and $u_{t} \in \sR^m$ are the states and controls, respectively. The function $\ell: \sR^{n} \times \sR^{m} \to \sR$ is the running cost and $\phi: \sR^{n} \to \sR$ is the terminal cost. The time horizon is $T > 0$ and $\bar{x}_0 \in \sR^n$ is the initial condition. The problem formulation in \cref{eq:nonlinear_optimal_control} includes state and control constraints represented by the functions $g(x_t)$ and $h(u_t)$. In the next two subsections, we provide the specific nonlinear optimal control examples for the cases of the Dubins vehicle and quadrotor.

\subsubsection{Dubins Vehicle}
\label{subsec:dubins_vehicle}

The Dubins vehicle is a dynamics model with a state $x = (p_x, p_y, \theta) \in \sR^3$, where $p_x$ and $p_y$ are the vehicle's position in the Cartesian plane and $\theta$ is its orientation. We use the unicycle formulation of the continuous-time Dubins vehicle dynamics $\dot x= f(x, u)$ adapted from \citet{siciliano2009unicycle}, where the control is given by $u = (v, \omega) \in \sR^2$. Here, $v$ is the forward velocity of the vehicle and $\omega$ is the steering velocity of the vehicle.
\begin{figure}[h]
\centering
\includegraphics[width=0.4\textwidth]{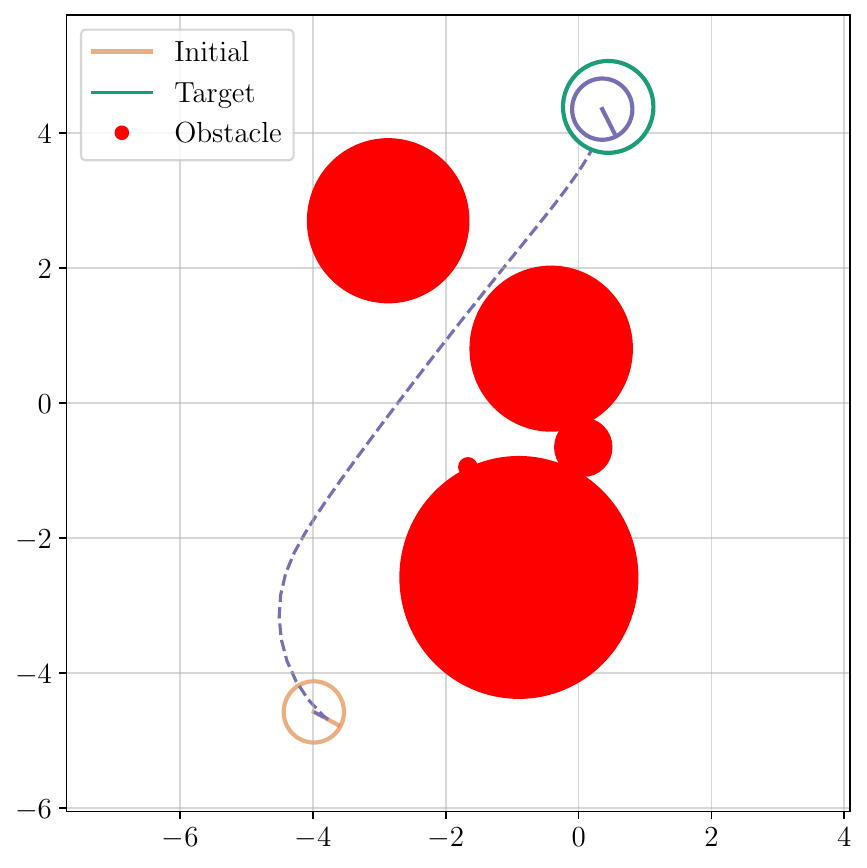}
\caption{Visualization of a sample Dubins vehicle task. The goal is to reach the target state while avoiding obstacles and respecting the dynamics and input constraints.}
\label{fig:dubins_visualization}
\end{figure}

We formulate a nonlinear optimal control problem following \cref{eq:nonlinear_optimal_control}. We discretize the continuous-time dynamics using the Euler discretization $x_{t+1} = F(x_t, u_t) = x_t + f(x_t ,u_t)\Delta t$ and use quadratic costs $\ell(x, u) = x^\top Q x + u^\top R u$ and $\phi(x) = x^\top Q_T x$, where $Q = \diag(1.0, 1.0, 0.1)$, $R = 0.1 \cdot I$, and $Q_T = 100\cdot Q$.
The initial and target state, $x_0$ and $x_{\text{target}}$, are sampled uniformly from $\mathcal{U}(-\bar{x}, \bar{x})$, where $\bar{x} = (5.0, 5.0, \pi)$. The discretization of the dynamics uses $\Delta t = 0.033$ and the time horizon for trajectory optimization is $T = 50$ timesteps.
We generate 5 circular obstacles with the form
\begin{equation}
    g_i(x_t) = r_i^2 -\norm{x_t - c_{i}}_2^2 \leq 0, \quad \forall t = 0, \ldots, T,
\end{equation}
where the centers $c_i \in \R^2$ are sampled uniformly at random in the region between the vehicle's initial and target positions, and the radii $r_i > 0$ are sampled uniformly from $\mathcal{U}(r_{\min}, r_{\max})$ with $r_{\min} = 0.01\cdot \norm{x_{\text{target}} - x_0}_2$ and $r_{\max} = 0.2\cdot \norm{x_{\text{target}} - x_0}_2$.
The controls are constrained by $v \in [-10, 10]$ and $\omega \in [-5, 5]$. This leads to a nonlinear optimization problem with 253 variables, 455 inequality constraints, and 153 equality constraints.

For generating the QP training data, we generate 500 QP subproblems by solving randomly generated Dubins vehicle problems with SQP using OSQP as the QP solver.
For evaluation, we generate 100 random control problems and solve them using SQP with OSQP or SQP with Deep FlexQP.
Each algorithm is allowed 50 SQP iterations and runs until the infinity norm of the SQP residuals falls below an absolute tolerance of $\varepsilon = 10^{-2}$. Furthermore, each QP solver runs until convergence of $10^{-3}$ is reached, with a max budget of 10 seconds and an unlimited number of iterations.

\subsubsection{Quadrotor}
We use the continuous-time quadrotor dynamics model $\dot x = f(x,u)$ from \citet{Sabatino2015QuadrotorCM}.
The model consists of the state $x\in\sR^{12}$ that includes the linear positions, angles, linear velocities, and angular velocities. The system is actuated by four controls, the collective thrust $F$ and three torques, given by $u = (F, \tau_x, \tau_y, \tau_z )\in \sR^4$.
\begin{figure}[h]
\centering
\includegraphics[width=0.4\textwidth]{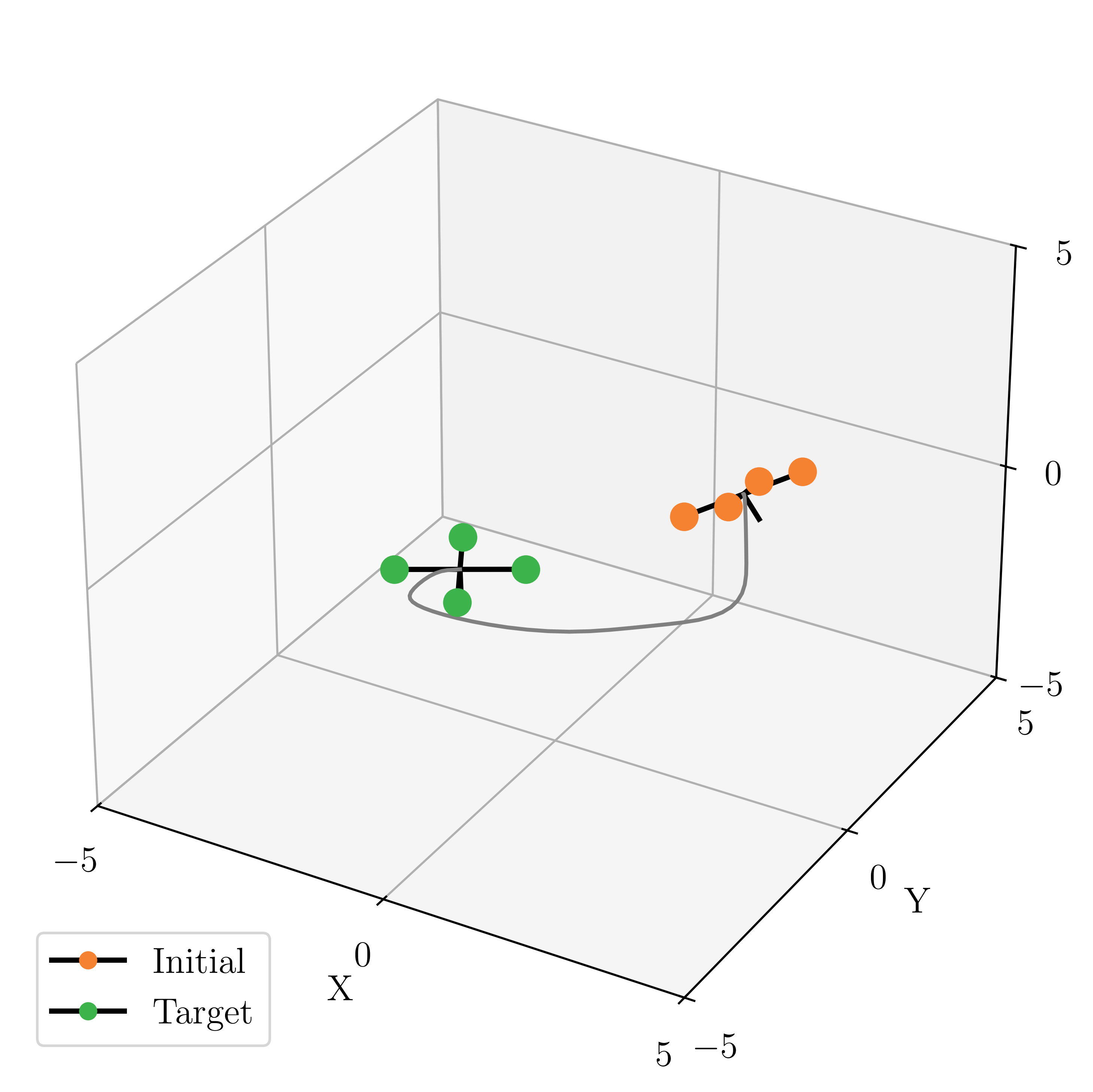}
\caption{Visualization of a sample quadrotor task. The goal is to reach the target state from the initial state subject to dynamical constraints and input constraints.}
\label{fig:quadorotor_visualization}
\end{figure}

Using this model, we formulate nonlinear optimal control problems as in \cref{eq:nonlinear_optimal_control}. We discretize the dynamics through the Euler discretization $x_{t+1} = F(x_t, u_t) = x_t + f(x_t, u_t)\Delta t$. The cost in \cref{eq:nonlinear_optimal_control} is defined by the quadratic cost $\ell(x, u) = x^\top Q x + u^\top R u$ and $\phi(x) = x^\top Q_T x$, where the cost matrices are given by
\begin{equation*}
Q = \diag(1.0, 1.0, 1.0, 0.1, 0.1, 0.1, 1.0, 1.0, 1.0, 0.1, 0.1, 0.1),
\end{equation*}
along with $R = 0.01\cdot I$ and $Q_T = 1000\cdot Q$.
The initial and target state are sampled uniformly from $\mathcal{U}(-\bar{x}, \bar{x})$ where $\bar{x} = (5, 5, 5, 1, 1, 1, \pi, \pi/2, \pi, 1, 1, 1)$. The discretization of the dynamics uses $\Delta t = 0.05$ and the time horizon for trajectory optimization is $T = 50$ timesteps. The controls are constrained in the ranges $F \in [0, 20]$ and $\tau_x,\tau_y,\tau_z \in [-10, 10]$. We use $m = 1.0$ kg for the mass of the quadrotor, $I_x = I_z = I_y= 1.0$ for its moments, and $g = 9.81$ for the acceleration due to gravity. This leads to a nonlinear optimization problem with 812 variables, 400 inequality constraints, and 612 equality constraints.

For generating the QP training data, we generate 500 QP subproblems by solving randomly generated quadrotor problems with SQP using OSQP as the QP solver.
For evaluation, we generate 100 random quadrotor problems and solve them using SQP with OSQP or SQP with Deep FlexQP.
Similar to the Dubins vehicle, each algorithm is allowed 50 SQP iterations and success in \cref{fig:sqp_comparison} (left) is achieved when the infinity norm of the SQP residuals falls below the absolute convergence tolerance $\varepsilon = 10^{-2}$.
Each QP solver runs until convergence of $10^{-3}$ is reached, with a max budget of 10 seconds and an unlimited number of iterations.

\subsection{Nonlinear Predictive Safety Filters}
\label{appendix:safety_filter}

Finally, we apply our proposed approach to accelerate a predictive safety filter for nonlinear model predictive control. These methods are based on control barrier functions (CBFs)~\citep{ames2019control} and filter a reference control $u^\text{ref}$ so that it better respects safety constraints~\citep{wabersich2021predictive}.
The following optimization is solved at every MPC step:
\begin{subequations} \label{eq:predictive_safety_filter}
\begin{align}
    \minimize_{x, u} \quad & \sum_{t = 0}^{T - 1} \norm{u_t - u_t^\text{ref}}_2^2, \\
    \st \quad & x_{t + 1} = F(x_t, u_t), \quad \forall t = 0, \ldots, T - 1, \\
    & x_0 = \bar{x}_0 \\
    & (1 - \beta) g(x_t) - g(x_{t + 1}) \leq 0, \quad \forall t = 0, \ldots, T - 1, \\
    & h(u_t) \leq 0, \quad \forall t = 0, \ldots, T - 1,
\end{align}
\end{subequations}
where $\beta \in (0, 1)$ is a parameter controlling the strength of the CBF constraint. This differs from \cref{eq:nonlinear_optimal_control} because the discrete CBF constraint is defined between two consecutive states $x_t$ and $x_{t + 1}$ rather than assuming the state constraints are separable across time. Our method improves upon the Shield-MPPI method proposed by \citet{yin2023shield} because our optimization explicitly incorporates the dynamics and input constraints while also minimizing the discrepancy from the reference control trajectory. Furthermore, using our accelerated Deep FlexQP ensures that the optimization can be run fast enough for real-time control. We use a version of Deep FlexQP with performance guarantees from minimizing the generalization bound loss (see \cref{appendix:gen_bounds}).

\subsubsection{Shield-MPPI}

The method by \citet{yin2023shield} approximately solves a nonlinear optimization at every MPC step to generate safe controls given a trajectory from a high-level planner such as a model predictive path integral (MPPI) controller.
While the main motivation behind this approach is that it is computationally fast, unfortunately, there are a few flaws with the method because it has no real guarantees of safety and the MPPI trajectory is only used to warm-start this second optimization.
The main bottleneck preventing us from solving a more complex optimization in real-time is the solver speed.
Therefore, this is an application where accelerating optimizers using deep unfolding can shine.

\subsubsection{Randomized Problem Scenarios}

We use the same Dubins vehicle model as in \cref{subsec:dubins_vehicle}. 100 random scenarios are generated by first sampling a random initial and target state uniformly from $\mathcal{U}((-5, -5, -\pi), (5, 5, \pi))$. An obstacle is randomly sampled so its position falls between the initial and target state with a random radius $r$ depending on the distance between the initial and target state: $r \sim \mathcal{U}(0.01, 2) * \max(|p_x^\text{target} - p_x^\text{init}|, |p_y^\text{target} - p_y^\text{init}|)$. The controls are constrained in the ranges $v \in [-10, 10]$ and $\omega \in [-5, 5]$, enforced by clamping for Shield-MPPI and through the constraints of \cref{eq:predictive_safety_filter} for our SQP-based method.
The reference trajectory at every MPC step is given by running an MPPI controller that samples 10000 trajectories with a look-ahead horizon of 50 timesteps; with a dynamics discretization of $\Delta t = 0.05$, this corresponds to a planning horizon of 2.5 seconds ahead. The system experiences zero-mean Gaussian disturbances in its state at every MPC step with standard deviation $(0.05, 0.05, 0.01)$. Shield-MPPI is allowed to run up to 5 Gauss-Newton iterations per MPC step, while our SQP safety filter is allowed to run up to 5 SQP iterations per MPC step. These thresholds were determined by estimating the max number of iterations that would still allow for real-time control of the system.
Collisions in \cref{fig:sqp_comparison} (right) are counted if the state violates the CBF constraint (i.e., intersects the obstacle). Successes are counted if the vehicle reaches within a 0.1 radius of the target state.
\cref{fig:shield_mppi_vs_sqp_comparison_plots} shows the problem setup and sample trajectories of our approach compared with Shield-MPPI.

\begin{figure}[h]
    \centering
    \hspace*{\fill}\begin{subfigure}[b]{0.45\textwidth}
        \includegraphics[width=\textwidth]{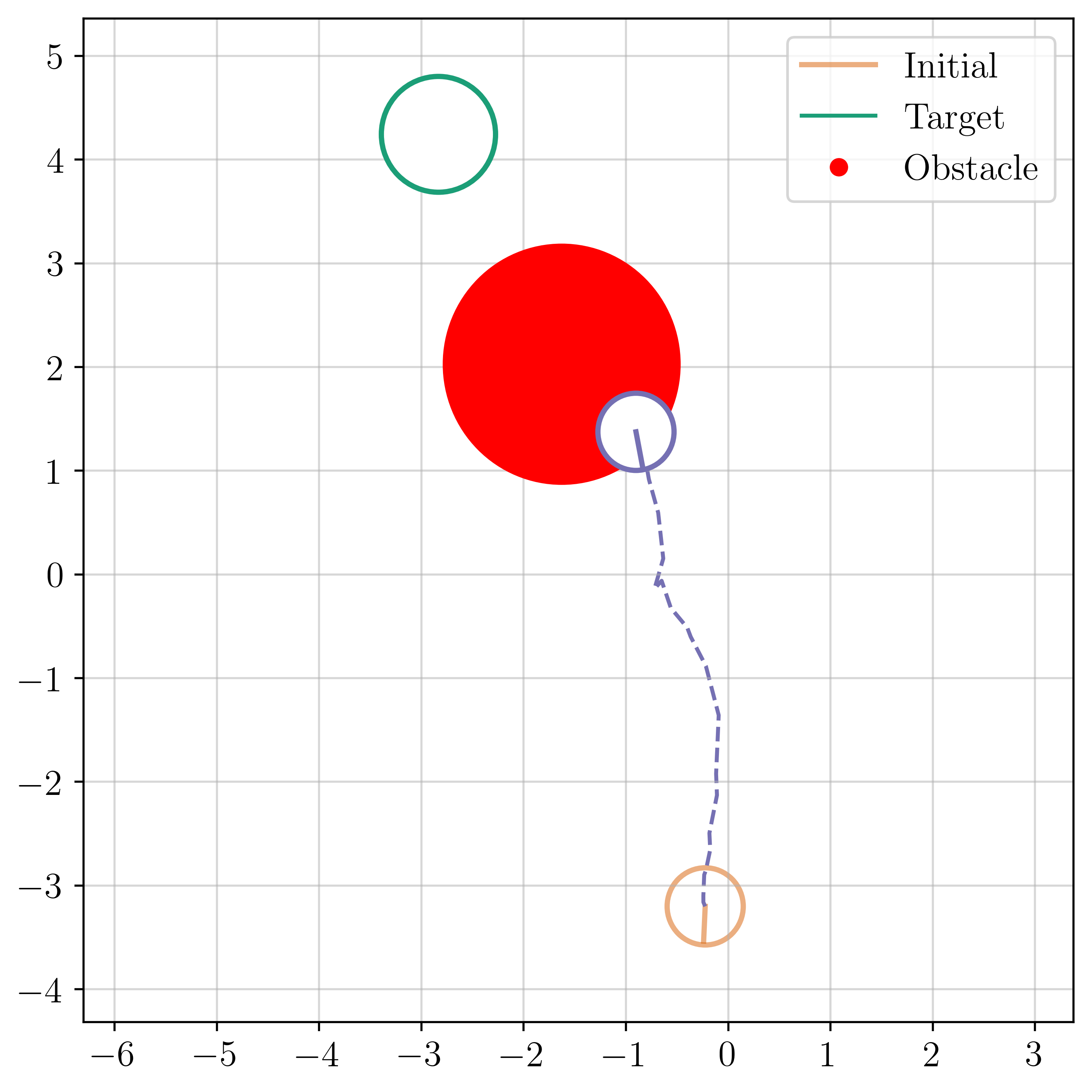}
        \includegraphics[width=\textwidth]{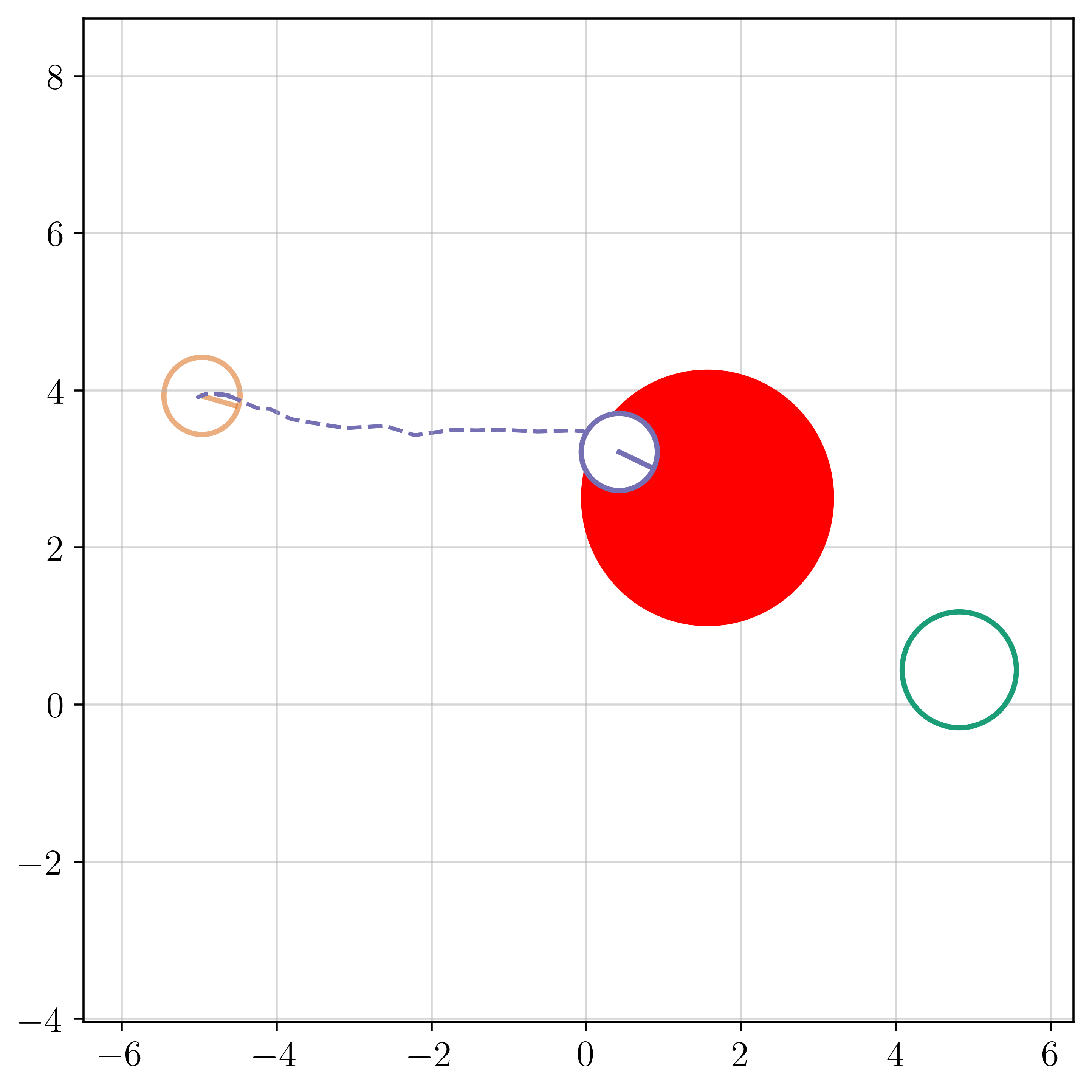}
        \includegraphics[width=\textwidth]{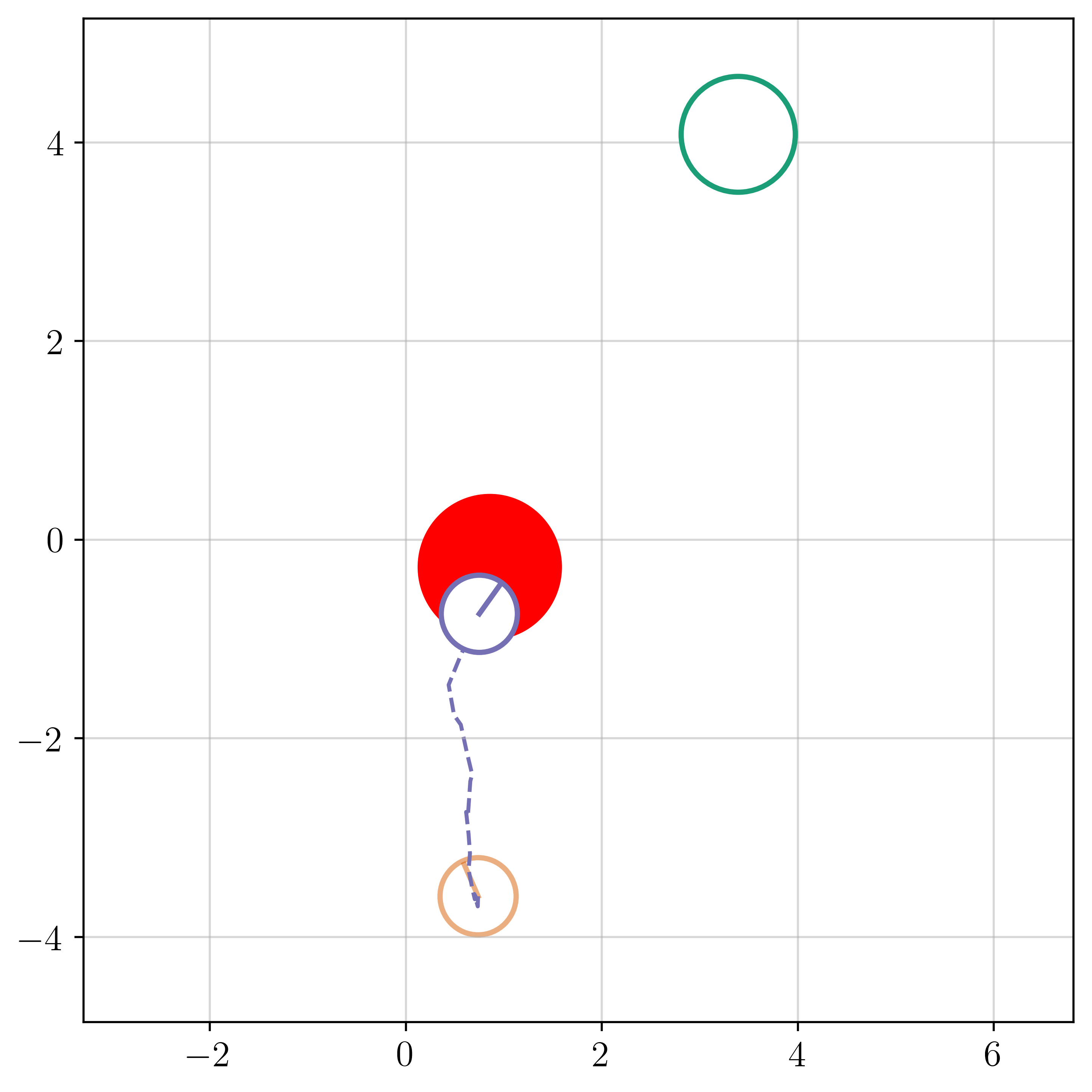}
        \caption{Shield-MPPI}
    \end{subfigure}\hfill
    \begin{subfigure}[b]{0.45\textwidth}
        \includegraphics[width=\textwidth]{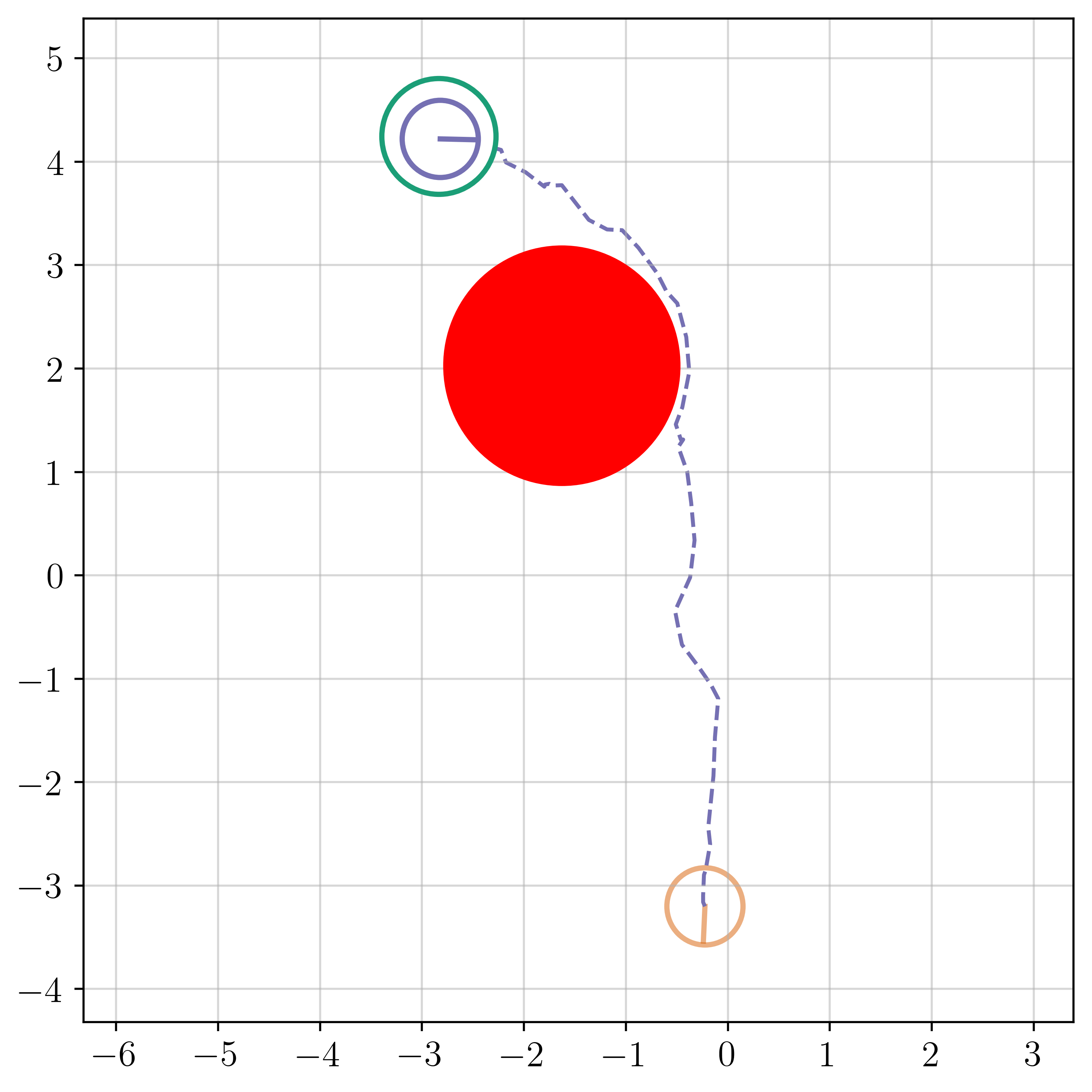}
        \includegraphics[width=\textwidth]{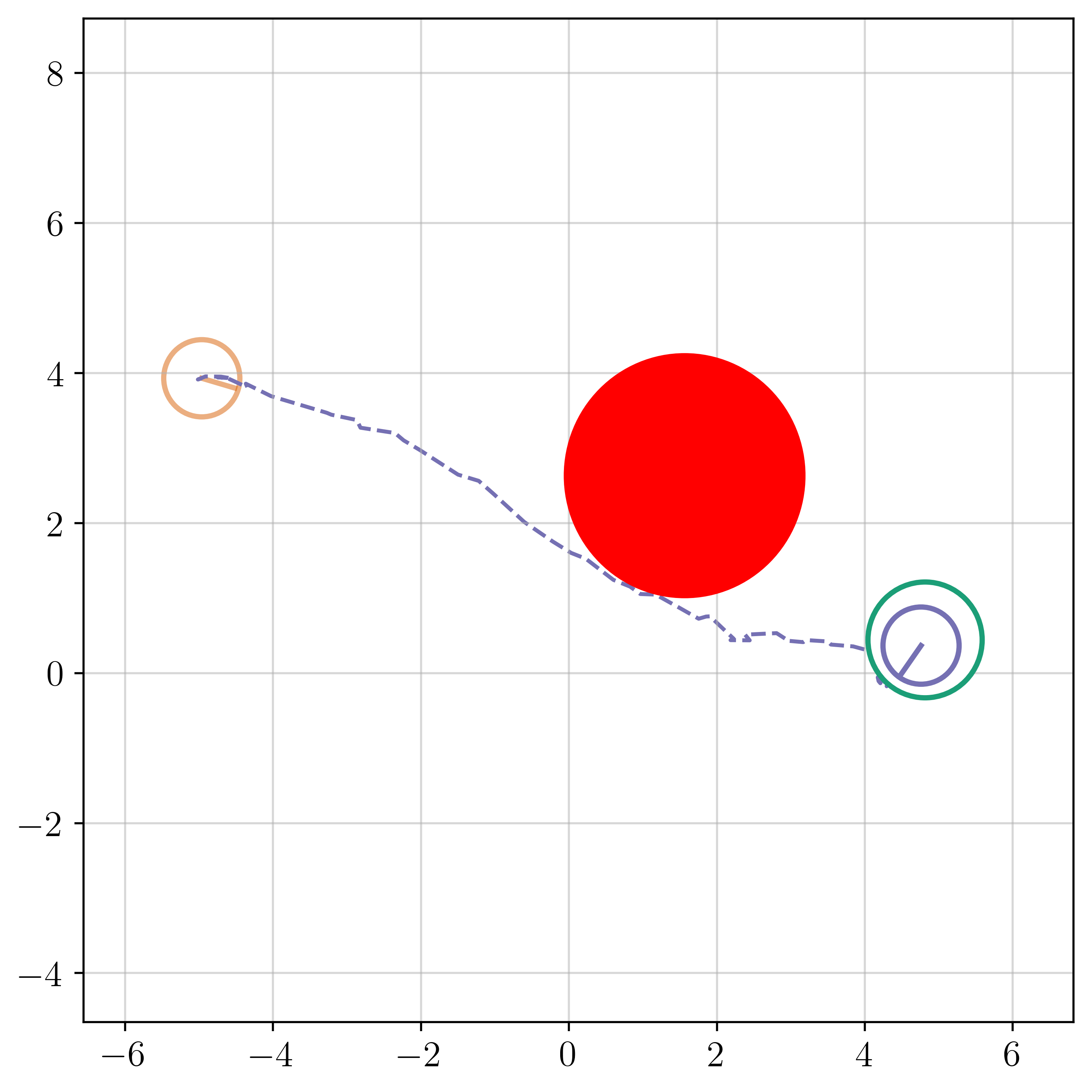}
        \includegraphics[width=\textwidth]{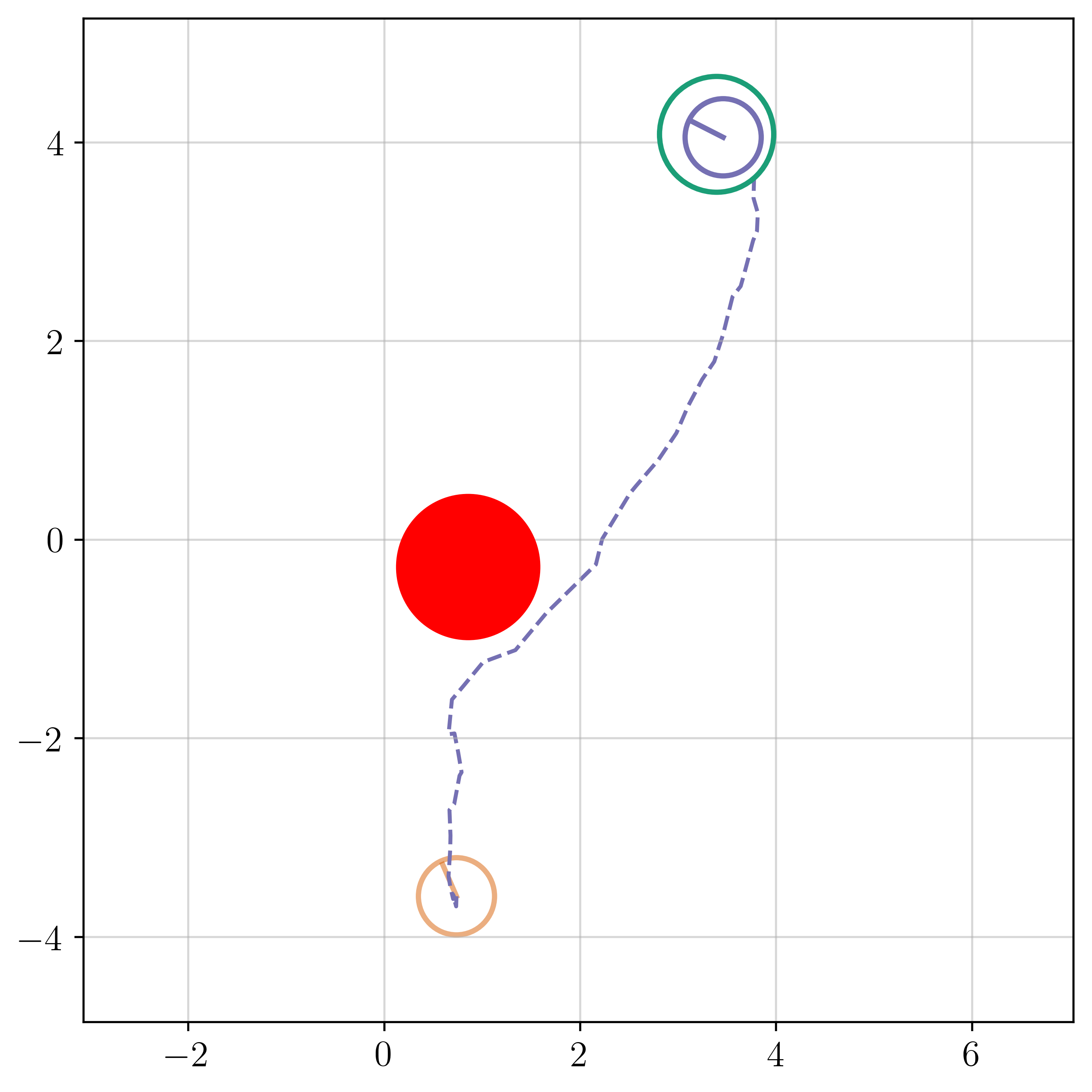}
        \caption{SQP Safety Filter --- Deep FlexQP}
    \end{subfigure}\hspace*{\fill}
    \caption{Sample trajectories comparing safety filter approaches on a nonlinear car system. The vehicle receives disturbances in the positions and orientation at every step. Our approach more effectively recovers from unsafe scenarios by better accounting for dynamic feasibility and constraints.}
    \label{fig:shield_mppi_vs_sqp_comparison_plots}
\end{figure}

\clearpage
\section{Performance Comparisons --- Small- to Medium-Scale QPs}
\label{appendix:timing_comparisons}

The vanilla and learned optimizers from \cref{sec:applications:medium_scale_qps} are benchmarked on 1000 test QPs from each problem class with the results summarized in \cref{fig:timing_comparison}.
The best version of OSQP is found using a hyperparameter search over the following configurations: fixed parameters for all iterations, adaptive penalty parameters using the OSQP rule, or adaptive penalty parameters using the ADMM rule. Similarly, the best version of FlexQP is found using a hyperparameter search over the following configurations: fixed parameters for all iterations, adaptive penalty parameters using an OSQP-like rule, or adaptive penalty parameters using the ADMM rule.
Problems are considered solved when the infinity norm of the QP residuals reaches below an absolute tolerance of $\varepsilon = 10^{-3}$. Optimizers are run with no limit on the number of iterations until a timeout of 1 second (1000 ms) is reached. Timings are compared using the normalized shifted geometric mean, which is the factor at which a specific solver is slower than the fastest one~\citep{mittelmann2010benchmarks}. We also compare the average number of iterations required to converge as well as the number of coefficient matrix factorizations required to converge to get a sense of where the optimizers are spending the most time. All methods use the direct method to solve their respective linear systems (i.e., equality-constrained QPs) at every iteration, and the factorization from the previous iteration is reused if the parameters have not changed by more than a factor of 5x following the heuristic used by OSQP~\citep{stellato2020osqp}.

\textbf{Key Takeaways:}
\begin{enumerate}
    \item {\color{green}Deep FlexQP} and {\color{orange}Deep OSQP --- Improved} solve QPs 2-5x faster than {\color{blue}OSQP}.
    \item {\color{green}Deep FlexQP} and {\color{orange}Deep OSQP --- Improved} require upwards of 10x less iterations to converge than {\color{blue}OSQP} and require a comparable amount of matrix factorizations.
    \item {\color{red}Deep OSQP --- RLQP Parameterization} struggles on problems with optimal control structure. This is not observed with {\color{orange}Deep OSQP --- Improved}. Learning the ADMM relaxation parameter $\alpha$ seems to be crucial for these problems.
\end{enumerate}

It is important to note that these results hold only when the direct method is used to solve the linear system. When using an indirect method such as the conjugate gradient (CG) method, converging in fewer iterations is actually a major benefit as each iteration across all the optimizers have roughly the same computational complexity (see \cref{appendix:eq_constrained_qp} for discussion and \cref{appendix:large_scale_qps} for results with the indirect method).

\begin{figure}[h]
\centering
\begin{subfigure}[t]{\textwidth}
    \centering
    \includegraphics[width=\linewidth]{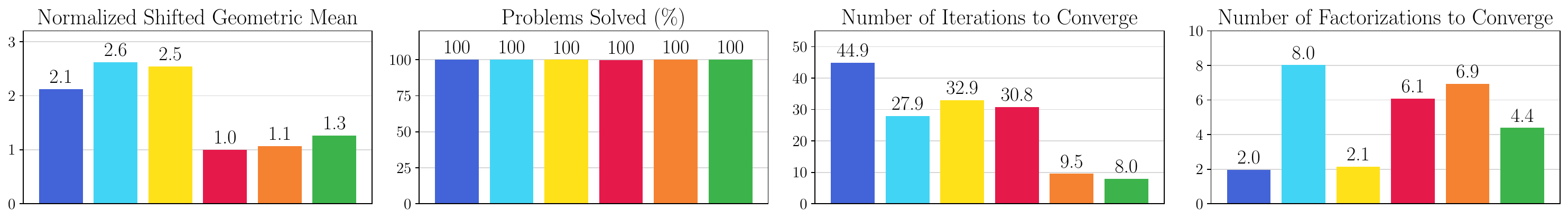}
    \caption{Random QPs}
\end{subfigure}\\
\begin{subfigure}[t]{\textwidth}
    \centering
    \includegraphics[width=\linewidth]{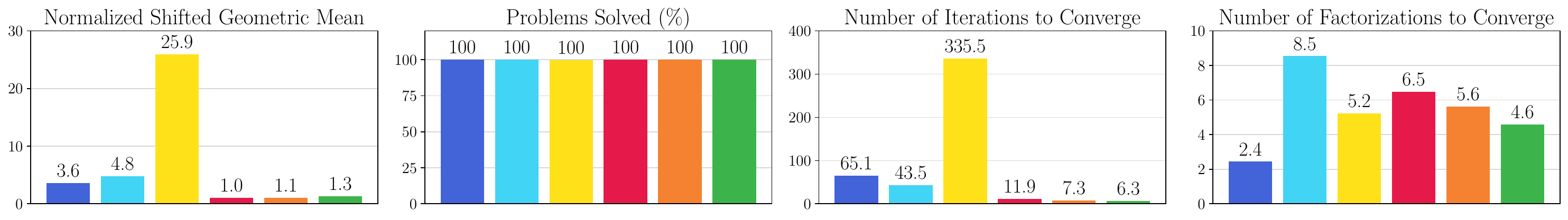}
    \caption{Random QPs with Equalities}
\end{subfigure}\\
\begin{subfigure}[t]{\textwidth}
    \centering
    \includegraphics[width=\linewidth]{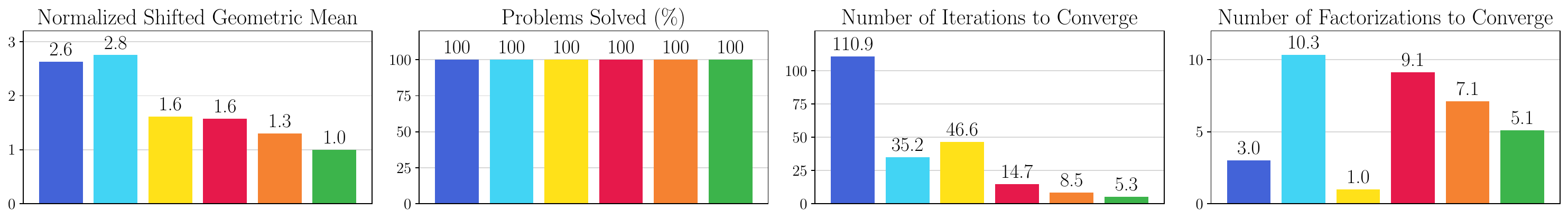}
    \caption{Portfolio Optimization}
\end{subfigure}
\caption{Performance comparison of vanilla vs. learned optimizers. Legend: {\color{blue}OSQP}, {\color{cyan}\textbf{FlexQP (Ours)}}, {\color{yellow}Deep OSQP}, {\color{red}Deep OSQP --- RLQP Parameterization}, {\color{orange}Deep OSQP --- Improved}, {\color{green}\textbf{Deep FlexQP (Ours)}}.}
\label{fig:timing_comparison}
\end{figure}

\begin{figure}[h]\ContinuedFloat
\centering
\begin{subfigure}[t]{\textwidth}
    \centering
    \includegraphics[width=\linewidth]{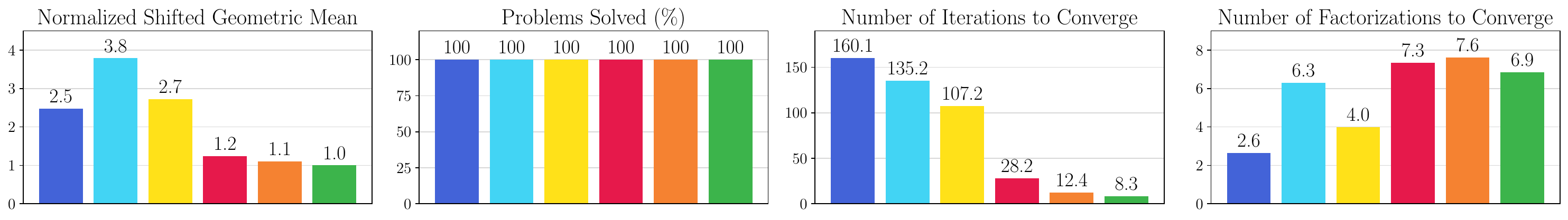}
    \caption{Support Vector Machines}
\end{subfigure}\\
\begin{subfigure}[t]{\textwidth}
    \centering
    \includegraphics[width=\linewidth]{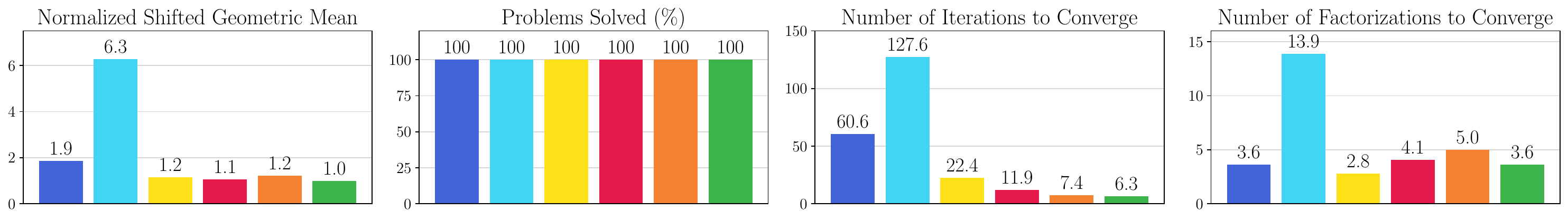}
    \caption{LASSO}
\end{subfigure}\\
\begin{subfigure}[t]{\textwidth}
    \centering
    \includegraphics[width=\linewidth]{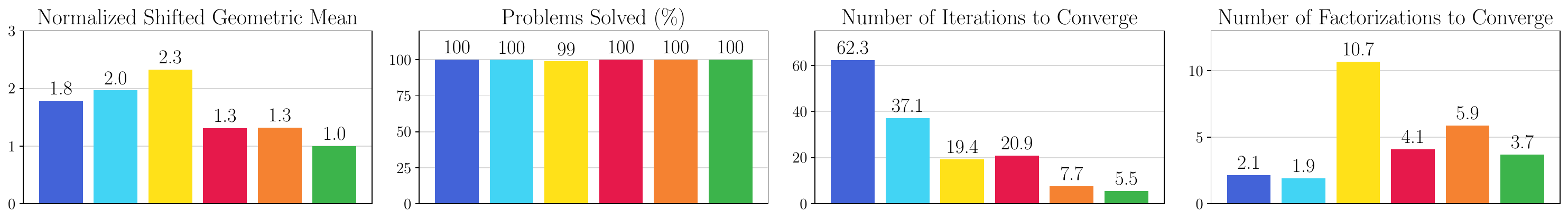}
    \caption{Huber Fitting}
\end{subfigure}\\
\begin{subfigure}[t]{\textwidth}
    \centering
    \includegraphics[width=\linewidth]{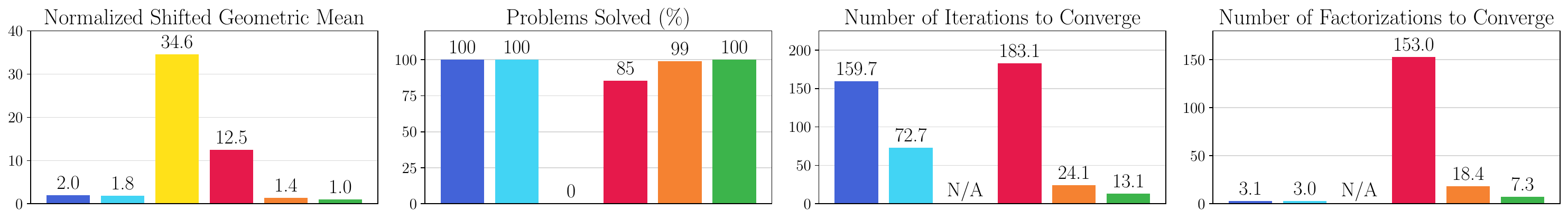}
    \caption{Random Linear OCPs}
\end{subfigure}\\
\begin{subfigure}[t]{\textwidth}
    \centering
    \includegraphics[width=\linewidth]{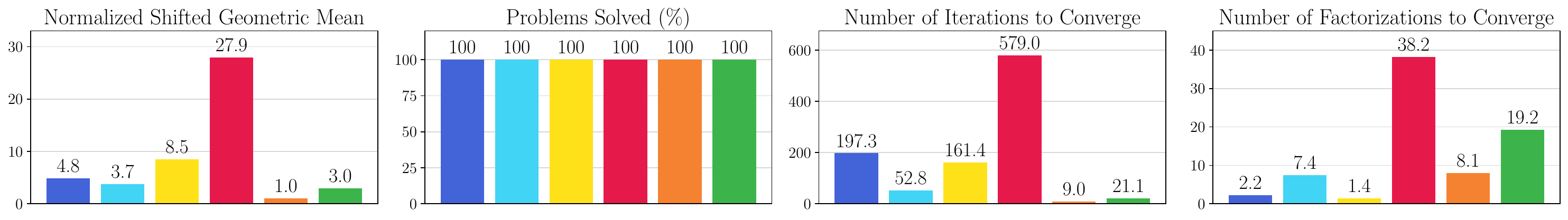}
    \caption{Double Integrator}
\end{subfigure}\\
\begin{subfigure}[t]{\textwidth}
    \centering
    \includegraphics[width=\linewidth]{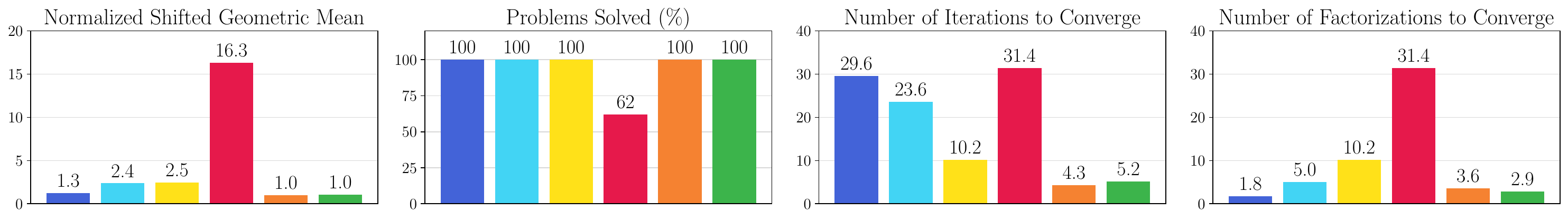}
    \caption{Oscillating Masses}
\end{subfigure}
\caption{Performance comparison of vanilla vs. learned optimizers. Legend: {\color{blue}OSQP}, {\color{cyan}\textbf{FlexQP (Ours)}}, {\color{yellow}Deep OSQP}, {\color{red}Deep OSQP --- RLQP Parameterization}, {\color{orange}Deep OSQP --- Improved}, {\color{green}\textbf{Deep FlexQP (Ours)}}.}
\end{figure}

% \clearpage
\section{Performance Comparisons --- Large-Scale QPs}
\label{appendix:large_scale_qps}

In this section, we report the full details of the large-scale QP experiments introduced in \cref{sec:applications:large_scale_qps}.
Due to memory limitations, each optimizer is unfolded for 10 iterations and trained with a batch size of 1.
Optimizers are trained for 5 epochs on 100 training problems and evaluated on 100 new test problems, with results presented in \cref{fig:large_scale_timing_comparison}.
Convergence is declared when the infinity norm of the QP residuals falls below an absolute tolerance of $\varepsilon = 10^{-3}$.
Each optimizer is run until convergence or until a timeout of 10 minutes is reached.
Similar to \cref{appendix:timing_comparisons}, we report the normalized shifted geometric mean to better highlight the relative performance of each optimizer (wall clock times are provided in \cref{fig:large_scale_qp_comparison}).
We also report the percent of problems solved, the average number of iterations taken to converge, and the average number of CG iterations necessary to solve the linear system at each ADMM iteration.
For the optimizers that failed to converge on any problems, we report the average final QP residual infinity norms and average number of iterations run in \cref{table:portfolio_opt_large_scale_extra,table:svm_large_scale_extra}.
This gives a rough idea of how far away the optimizers were from converging when they were timed out.

\begin{figure}[h]
\centering
\begin{subfigure}[t]{\textwidth}
    \centering
    \includegraphics[width=\linewidth]{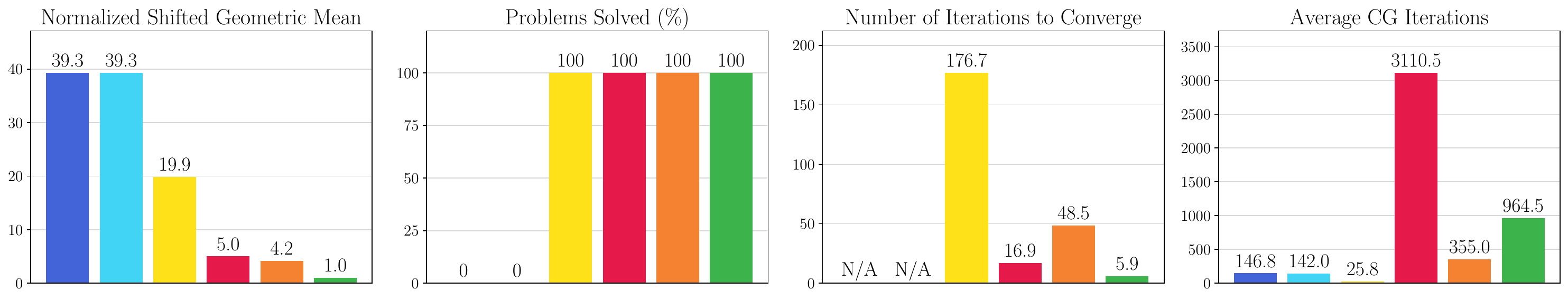}
    \caption{Large-Scale Portfolio Optimization}
\end{subfigure}\\
\begin{subfigure}[t]{\textwidth}
    \centering
    \includegraphics[width=\linewidth]{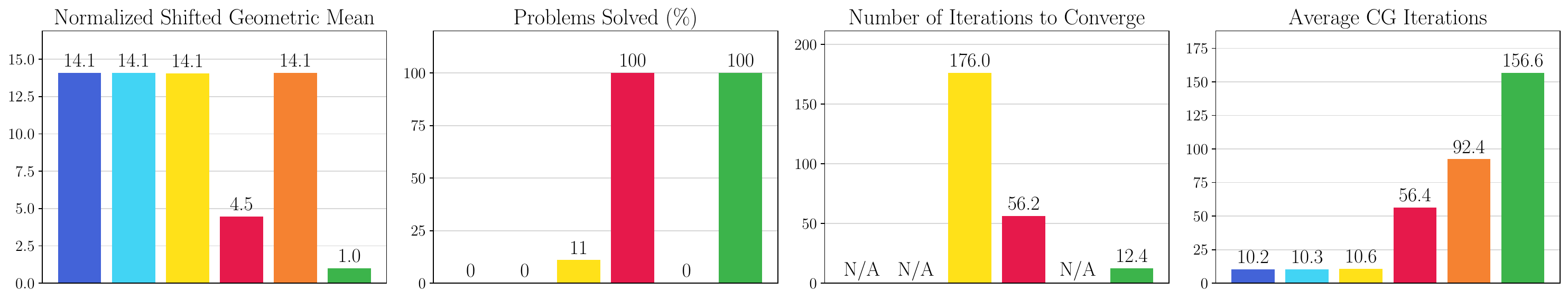}
    \caption{Large-Scale Support Vector Machines}
\end{subfigure}
\caption{Performance comparison of vanilla vs. learned optimizers. Legend: {\color{blue}OSQP}, {\color{cyan}\textbf{FlexQP (Ours)}}, {\color{yellow}Deep OSQP}, {\color{red}Deep OSQP --- RLQP Parameterization}, {\color{orange}Deep OSQP --- Improved}, {\color{green}\textbf{Deep FlexQP (Ours)}}.}
\label{fig:large_scale_timing_comparison}
\end{figure}

\begin{table}[h]
\begin{center}
\begin{tabular}{ c | c | c }
 Optimizer & Final QP Residual Infinity Norm & Iterations Run \\
 \hline
 {\color{blue}OSQP} & \num{2.82e-03} & 325.5 \\
 {\color{cyan}FlexQP (Ours)} & \num{1.45e-03} & 326.3 \\
\end{tabular}
\caption{Large-scale portfolio optimization failed optimizer statistics.}
\label{table:portfolio_opt_large_scale_extra}
\end{center}
\end{table}

\begin{table}[h]
\begin{center}
\begin{tabular}{ c | c | c }
 Optimizer & Final QP Residual Infinity Norm & Iterations Run \\
 \hline
 {\color{blue}OSQP} & \num{2.10e-03} & 178.9 \\
 {\color{cyan}FlexQP (Ours)} & \num{1.54e-03} & 326.3 \\
 {\color{orange}Deep OSQP --- Improved} & \num{8.04e-02} & 175.2 \\
\end{tabular}
\caption{Large-scale support vector machine failed optimizer statistics.}
\label{table:svm_large_scale_extra}
\end{center}
\end{table}

From these results, we observe that the traditional optimizers ({\color{blue}OSQP} and {\color{cyan}FlexQP}), failed to converge on any of the problems within the allotted time.
Furthermore, while the fine-tuning procedure worked for all learned optimizers on the portfolio optimization problems, it appears to have failed for {\color{yellow}Deep OSQP} and {\color{orange}Deep OSQP --- Improved} on the SVM problems.

Interestingly, the learned optimizers with the data-driven rules for the penalty parameters required many more CG iterations to solve their respective linear systems compared to the traditional optimizers.
There seems to be a tradeoff between number of ADMM iterations required to converge vs. the condition number of the coefficient matrices generated at every ADMM iteration.
For example, even though {\color{green}Deep FlexQP} converges in relatively few iterations, each iteration requires an order of magnitude more CG iterations compared to {\color{blue}OSQP} to solve the linear systems to a similar precision.
Using preconditioning with the learned optimizers would likely lead to even greater performance gains over the traditional optimizers.

\section{Sample Parameter Prediction Plots}
\label{appendix:parameter_plots}

\begin{figure}[H]
\centering
\begin{subfigure}[t]{\textwidth}
    \centering
    \includegraphics[width=\linewidth]{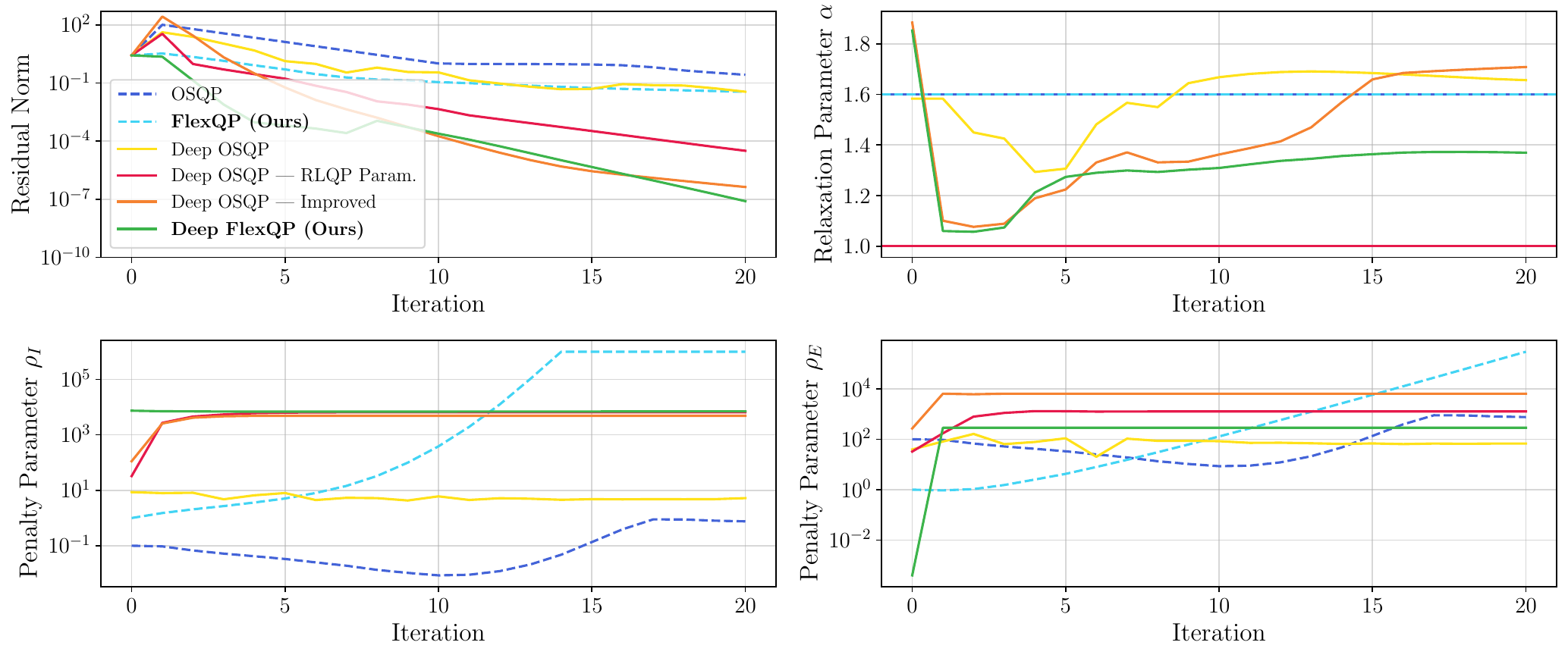}
    \caption{Portfolio Optimization}
\end{subfigure}\\
\begin{subfigure}[t]{\textwidth}
    \centering
    \includegraphics[width=\linewidth]{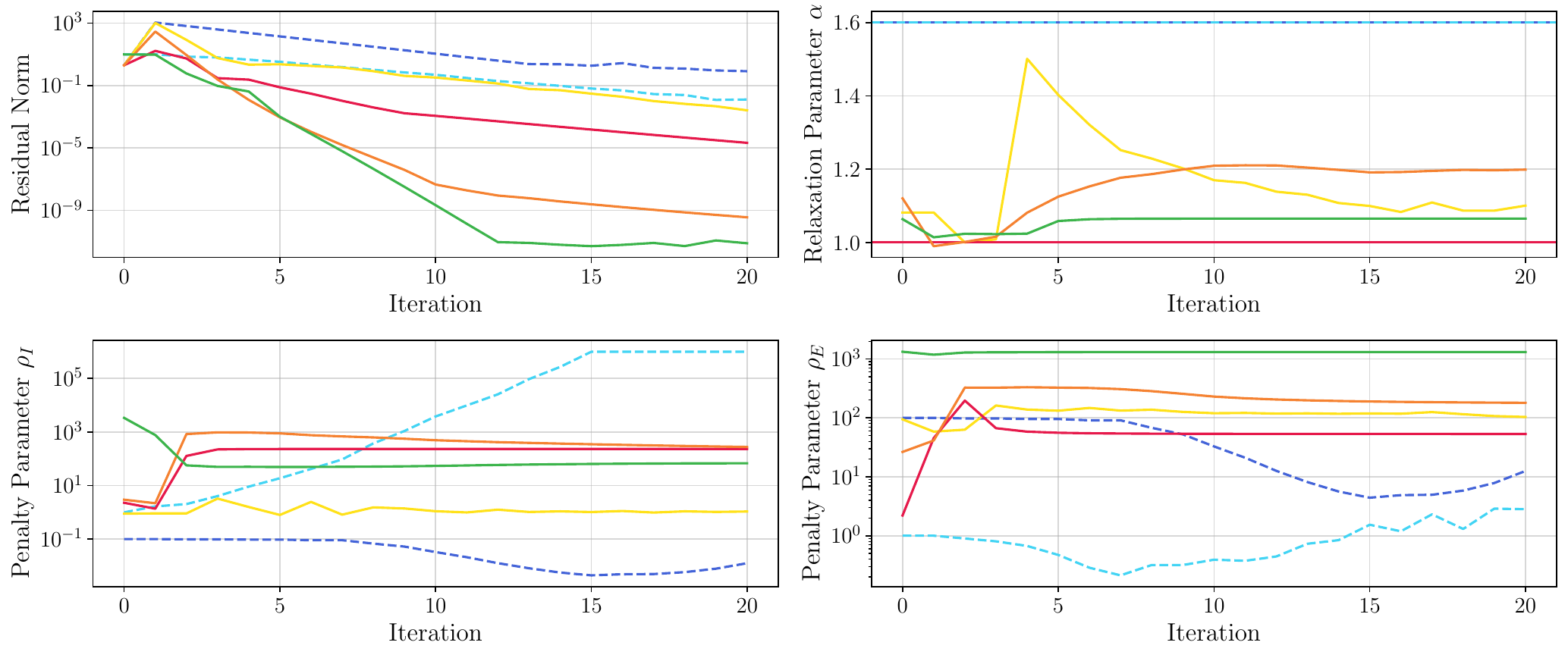}
    \caption{Huber Fitting}
\end{subfigure}\\
\centering
\begin{subfigure}[t]{\textwidth}
    \centering
    \includegraphics[width=\linewidth]{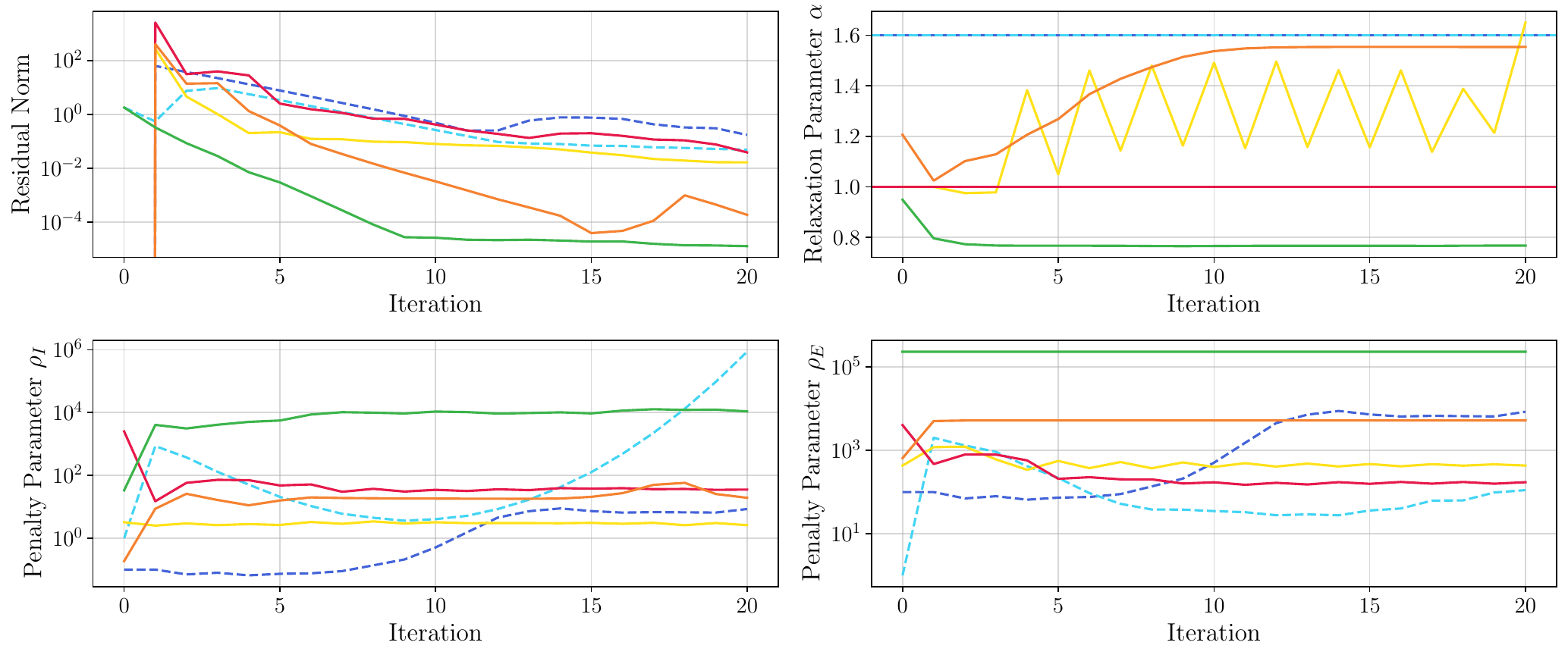}
    \caption{Linear OCPs}
\end{subfigure}
\caption{Representative parameter predictions for a few problem instances.}
\label{fig:parameters}
\end{figure}
In this section, we compare the $\alpha$, $\rho_I$, and $\rho_E$ predictions across the traditional and learned optimizers on a few representative optimization problems from \cref{sec:applications:medium_scale_qps}.
The parameters from the first 20 iterations of the different optimizers are shown in \cref{fig:parameters}.
Since {\color{red}Deep OSQP --- RLQP Parameterization}, {\color{orange}Deep OSQP --- Improved}, and {\color{green}Deep FlexQP} output vectors of penalty parameters $\rho_I$ and $\rho_E$, we plot the mean prediction.
Note that {\color{blue}OSQP}, {\color{cyan}FlexQP}, and {\color{red}Deep OSQP --- RLQP Parameterization} use a fixed $\alpha$ of $1.6$, $1.6$, and $1.0$, respectively.
We also report the infinity norm of the QP residuals to get a better idea of when during the optimization a prediction is being made.

We observe that the learned rules seem to quickly adjust the parameters during the beginning of the optimization compared to the heuristic rules ({\color{blue}OSQP} and {\color{cyan}FlexQP}), which seem to gradually adjust the parameters later in the optimization.
Adjusting the parameters early might provide a beneficial effect on the convergence by allowing the optimizer to adjust to a better initial condition based on the first few evaluations of the problem residuals.

% \clearpage
\section{Supplementary Results for Generalization Bounds}
\label{appendix:gen_bounds}

\begin{figure}[h]
\centering
\includegraphics[width=0.48\textwidth]{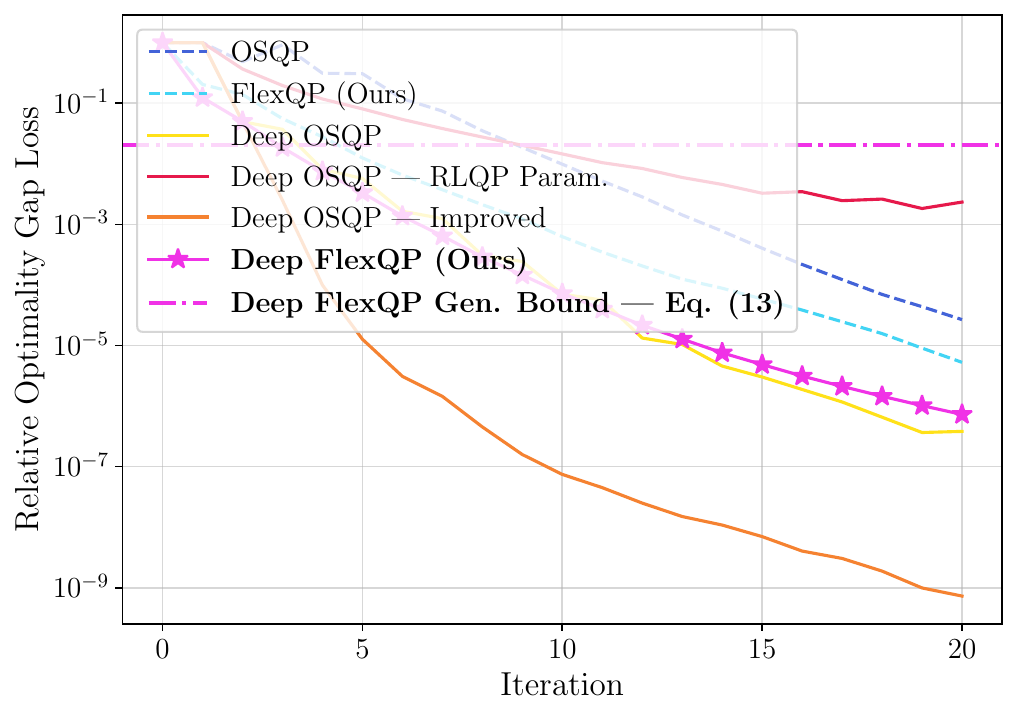}
\includegraphics[width=0.48\textwidth]{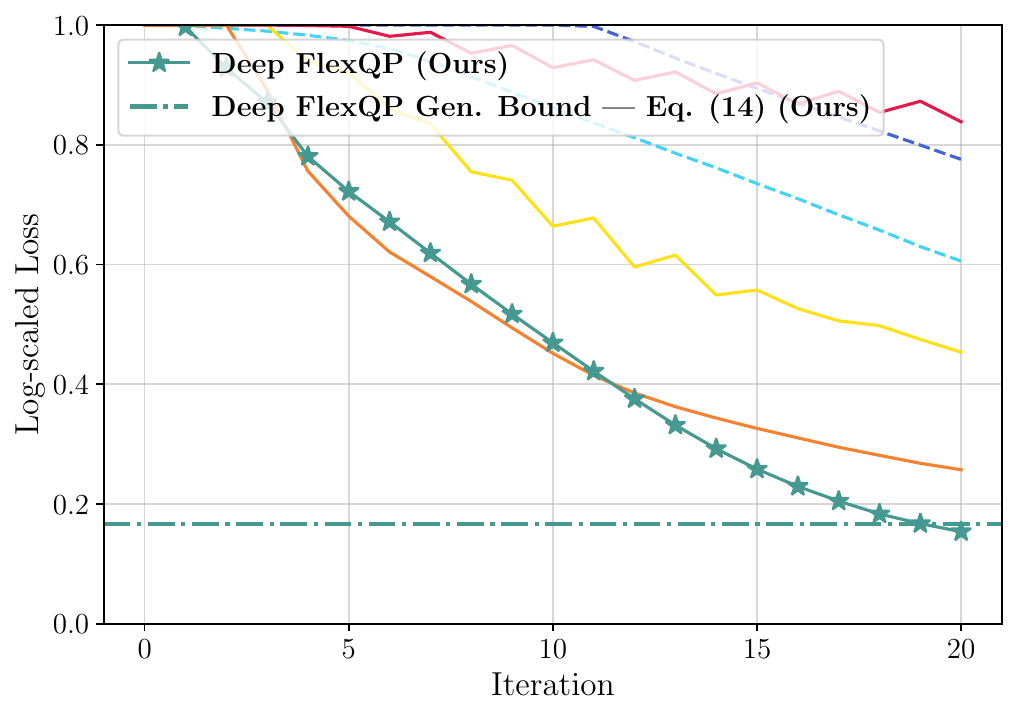}
\caption{Generalization bounds for Deep FlexQP trained on 500 oscillating masses QPs. Following \cref{fig:gen_bound_comparison}, this is another case where the generalization bound using the loss \cref{eq:saravanos_gen_bound_loss} is uninformative.}
\label{fig:osc_masses_gen_bounds}
\end{figure}

We provide an overview of the generalization bounds training procedure described by \citet{saravanos2025deep}, which in turn is adapted from the one described by \citet{majumdar2021pac}. Let $\vs = (P, q, G, h, A, b, x^*, y_I^*, y_E^*) \sim \gD$ denote a single sample from data distribution $\gD$ and let $\gS = \{\vs_i\}_{i=1}^N$ be a dataset of $N$ samples. Let $\ell(\vs, \theta) \in [0, 1]$ be a bounded loss for hypothesis $\theta \sim \gP$. The true expected loss is defined as
\begin{equation}
    \ell_{\gD}(\gP) = \E_{\vs \sim \gD} \E_{\theta \sim \gP} [ \ell(\vs, \theta) ],
\end{equation}
and the sample loss is
\begin{equation} \label{eq:sample_loss}
    \ell_{\gS}(\gP) = \E_{\theta \sim \gP} \left[ \frac{1}{N} \sum_{i = 1}^N \ell(\vs_i, \theta) \right].
\end{equation}

We rely on the following PAC-Bayes bounds that hold with probability $1 - \delta$~\citep[Corollary 1]{majumdar2021pac}:
\begin{equation}
    \ell_{\gD}(\gP) \leq \mathbb{D}^{-1}\left( \ell_{\gS}(\gP) \lvert\rvert \frac{\mathbb{D}_{KL}(\gP \lvert\rvert \gP_0) + \log(2\sqrt{N} / \delta)}{N} \right) \leq \ell_{\gS}(\gP) + \sqrt{\frac{\mathbb{D}_{KL}(\gP \lvert\rvert \gP_0) + \log(2\sqrt{N} / \delta)}{2N}},
\end{equation}
where $\mathbb{D}^{-1}(p \lvert\rvert c) = \sup \{ q \in [0, 1] | \mathbb{D}_{KL}(\gB(p) \lvert\rvert \gB(q) \leq c \}$ is the inverse KL-divergence for Bernoulli random variables $\gB(p)$ and $\gB(q)$. The first bound is tighter and is therefore useful for computing the generalization bounds as a numerical certificate of performance, while the second bound has the nice interpretation of a training loss plus a regularization term that depends on the size of the training set and penalizes being off from the prior $\gP_0$. During training, we minimize the second bound using either the loss \cref{eq:saravanos_gen_bound_loss} or \cref{eq:new_gen_bound_loss}.

During the evaluation of the tighter generalization bound (after training is complete), since it is difficult to compute the expectation over $\theta \sim \gP$ in \cref{eq:sample_loss}, we instead estimate the sample loss using a large number of samples $\{\theta_j\}_{j=1}^M$ from $\gP^*$:
\begin{equation}
    \hat{\ell}_{\gS}(\gP^*) = \frac{1}{NM} \sum_{i = 1}^N \sum_{j = 1}^M \ell(\vs_i, \theta_j).
\end{equation}
The following sample convergence bound holds with probability $1 - \delta'$:
\begin{equation}
    \bar{\ell}_{\gS}(\gP^*) \leq \mathbb{D}^{-1}\left( \hat{\ell}_{\gS}(\gP^*) \lvert\rvert \frac{1}{M} \log(\frac{2}{\delta'}) \right).
\end{equation}
Combining these bounds results in a final version of the PAC-Bayes bound that holds with probability $1 - \delta - \delta'$~\citep{majumdar2021pac}:
\begin{equation}
    \ell_{\gD}(\gP^*) \leq \mathbb{D}^{-1}\left( \bar{\ell}_{\gS}(\gP^*) \lvert\rvert \frac{\mathbb{D}_{KL}(\gP^* \lvert\rvert \gP_0) + \log(2\sqrt{N} / \delta)}{N} \right). \label{eq:final_gen_bound}
\end{equation}
This is the final bound that we report in our experiments.

The prior $\gP_0$ for all our models is a stochastic Deep FlexQP trained for 500 epochs on 500 QPs generated from the \textbf{Random QPs with Equalities} problem class using the supervised learning setup from \cref{sec:deep_unfolding}. We train Deep FlexQP for the generalization bound loss using either \cref{eq:saravanos_gen_bound_loss} or \cref{eq:new_gen_bound_loss} with $\delta = 0.009$. We fix a training set for the generalization bounds using 500 problems from the class of interest and train the model for 1000 epochs. We evaluate \cref{eq:final_gen_bound} using $M = 10000$ model samples and $\delta' = 0.001$. Our bounds therefore hold with 99\% probability. We report results comparing \cref{eq:saravanos_gen_bound_loss} vs. \cref{eq:new_gen_bound_loss} for \textbf{LASSO} in \cref{fig:gen_bound_comparison} and for \textbf{Oscillating Masses} in \cref{fig:osc_masses_gen_bounds}. The test loss is computed over 1000 new samples from the target problem class. Overall, the loss \cref{eq:saravanos_gen_bound_loss} results in a less informative bound as it is above all the optimizers, even though the performance in practice can be much better.

Finally, we train Deep FlexQP on 500 QP subproblems generated from a nonlinear predictive safety filter task described in \cref{appendix:nonlinear_sqp} using the same procedure to minimize the generalization bound through the loss defined in \cref{eq:new_gen_bound_loss}. We use this Deep FlexQP as the QP solver for the predictive safety filter SQP approach described in \cref{appendix:safety_filter}. This provides a numerical certificate on the performance that would not hold for a traditional optimizer.

\begin{figure}[h]
\centering
\includegraphics[width=0.48\textwidth]{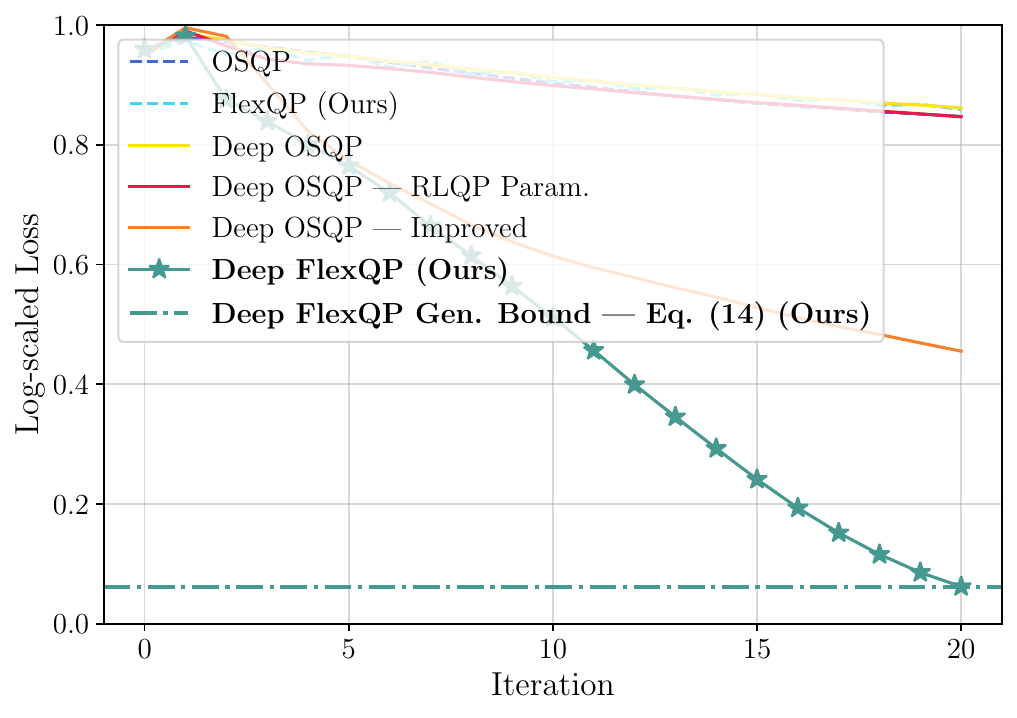}
\caption{Generalization bound for Deep FlexQP trained on 500 QP subproblems generated from the nonlinear predictive safety filter task.}
\label{fig:car_dcbf_gen_bounds}
\end{figure}

\clearpage
\section{LSTM vs. MLP Policy Comparison}
\label{appendix:ablation_lstm}

This section presents an ablation analysis on the use of LSTMs to parameterize the policies in both Deep OSQP and Deep FlexQP.
The use of LSTMs further leverages the analogy between deep-unfolded optimizers and RNNs, as discussed in \citet{monga2021algorithm}.
Our hypothesis is that the RNN hidden state can encode a compressed history or context from the past optimization variables and residuals, thereby leading to a better prediction of the algorithm parameters to apply at the current iteration.
Using an MLP only provides access to information from the current iterate, which could lead to a myopic choice of parameters.

Our results show that LSTMs enhance performance on several problem classes (\cref{fig:lstm_mlp_comparison}).
LSTMs appear to help the most on problems where the active constraints might switch suddenly from one iteration to the next.
These types of problems include the machine learning ones, such as SVM, LASSO, and Huber fitting, as well as some of the control problems, like the oscillating masses.

\begin{figure}[h]
\centering
\begin{subfigure}[t]{0.5\textwidth}
        \centering
        \includegraphics[width=\linewidth]{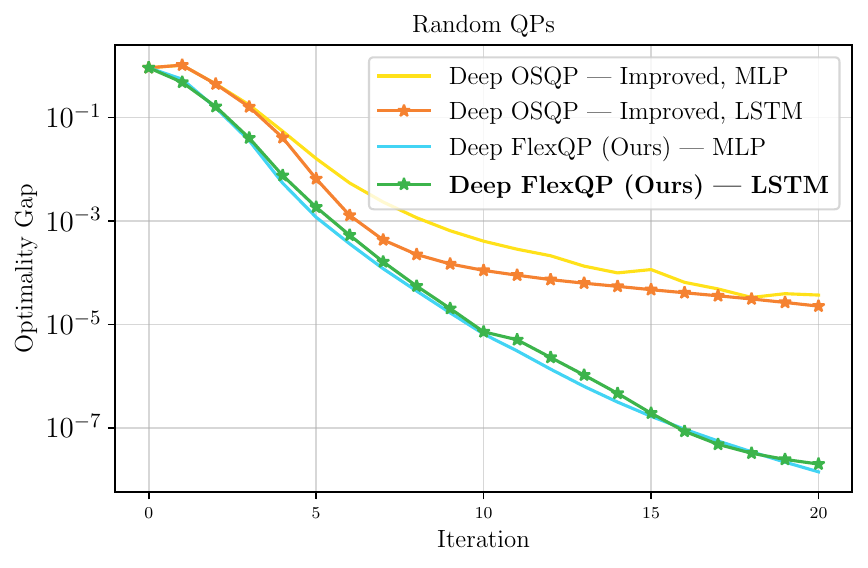}
    \end{subfigure}%
    ~
    \begin{subfigure}[t]{0.5\textwidth}
        \centering
        \includegraphics[width=\linewidth]{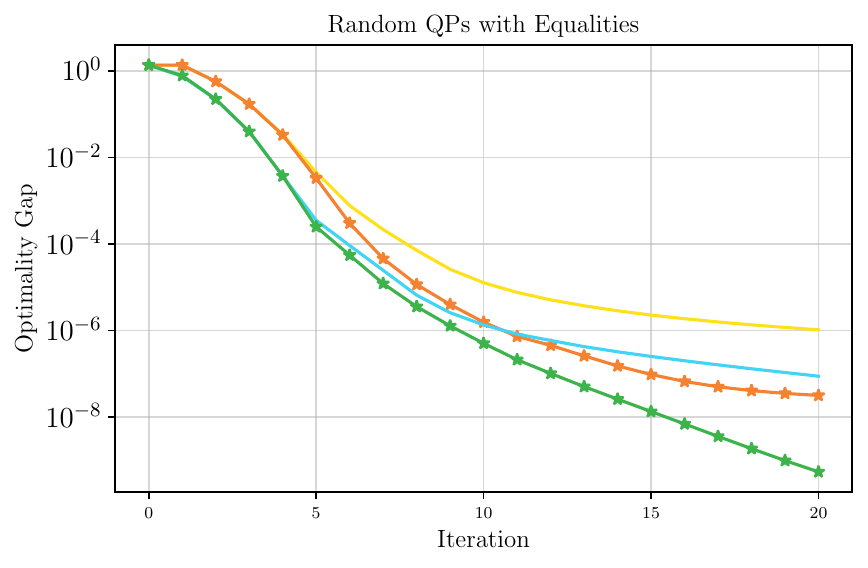}
    \end{subfigure}\\
    \begin{subfigure}[t]{0.5\textwidth}
        \centering
        \includegraphics[width=\linewidth]{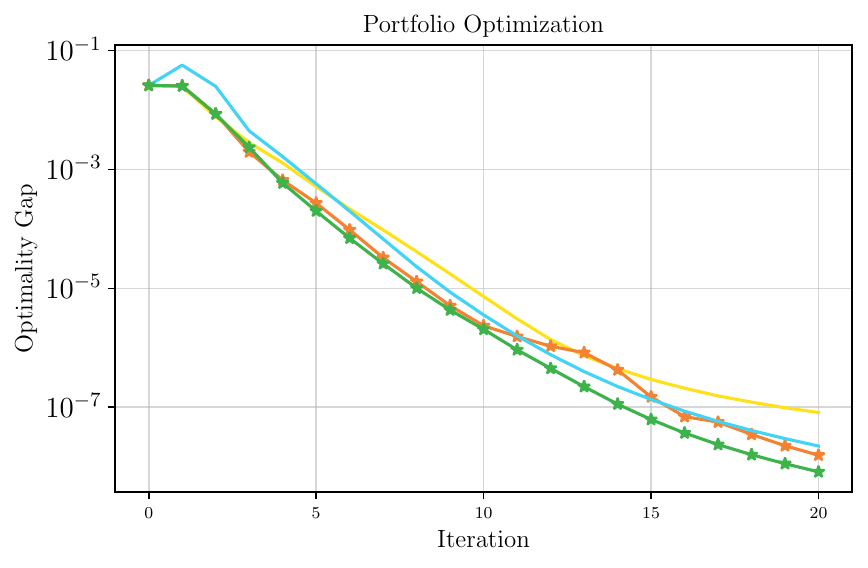}
    \end{subfigure}%
    ~
    \begin{subfigure}[t]{0.5\textwidth}
        \centering
        \includegraphics[width=\linewidth]{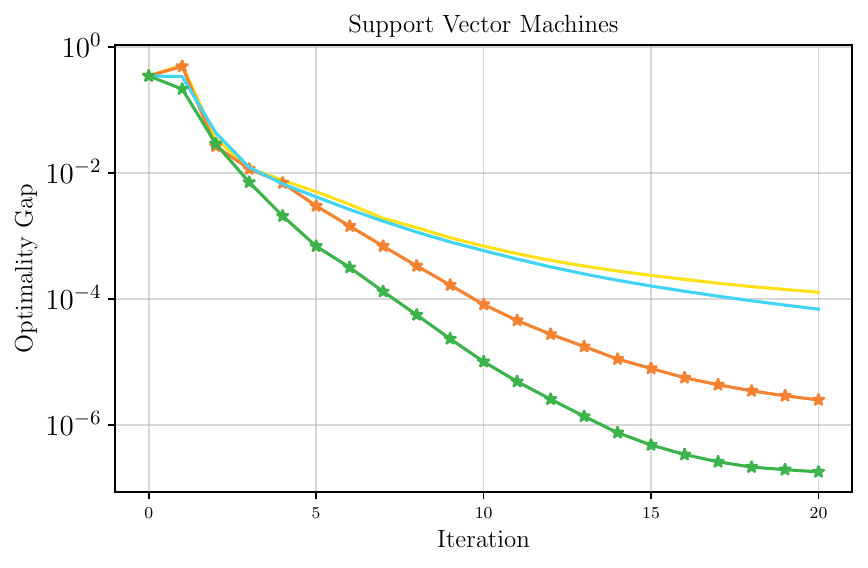}
    \end{subfigure}\\
    \begin{subfigure}[t]{0.5\textwidth}
        \centering
        \includegraphics[width=\linewidth]{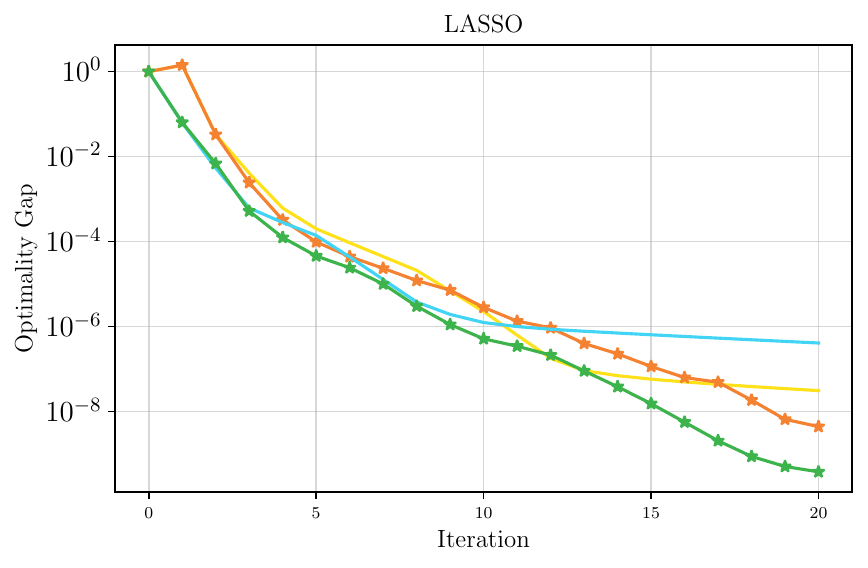}
    \end{subfigure}%
    ~
    \begin{subfigure}[t]{0.5\textwidth}
        \centering
        \includegraphics[width=\linewidth]{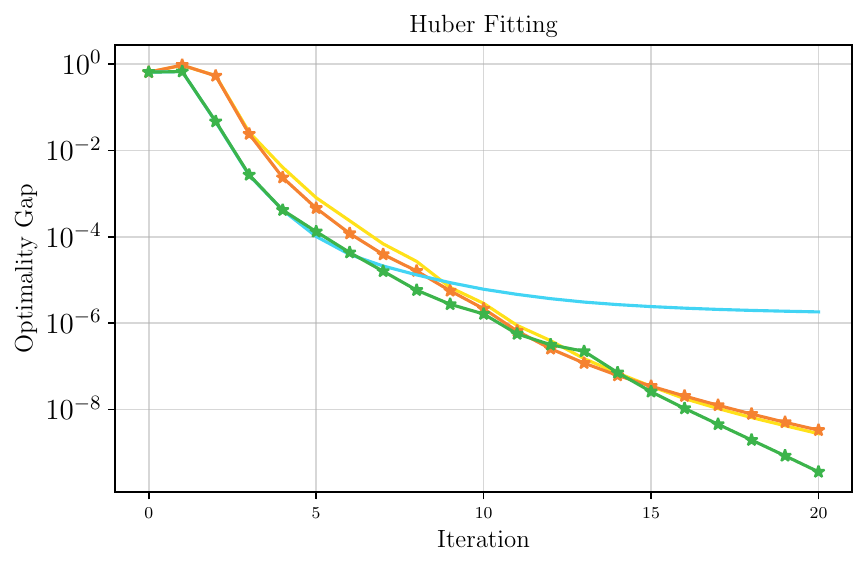}
    \end{subfigure}\\
\caption{Comparison of MLP vs. LSTM policies. Performance is averaged over 1000 test problems.}
\label{fig:lstm_mlp_comparison}
\end{figure}

\begin{figure}[h]\ContinuedFloat
    \centering
    \begin{subfigure}[t]{0.5\textwidth}
        \centering
        \includegraphics[width=\linewidth]{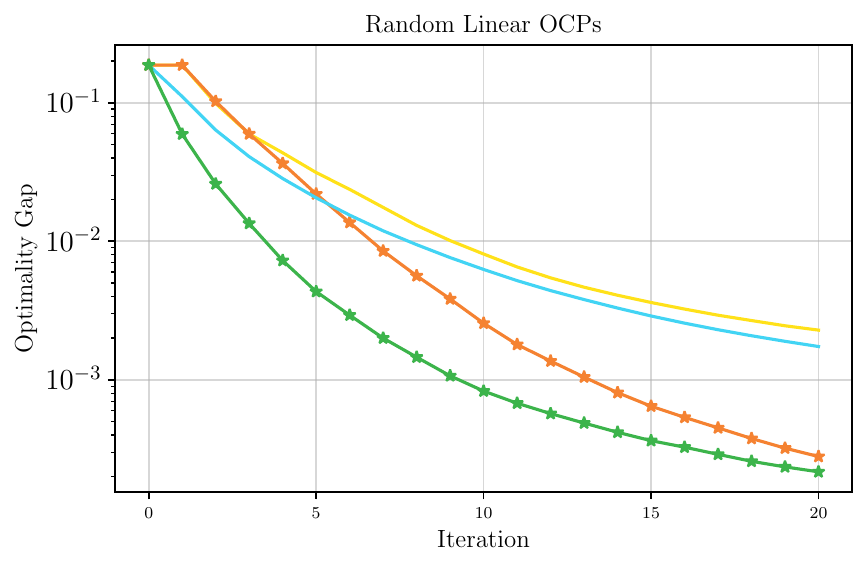}
    \end{subfigure}%
    ~
    \begin{subfigure}[t]{0.5\textwidth}
        \centering
        \includegraphics[width=\linewidth]{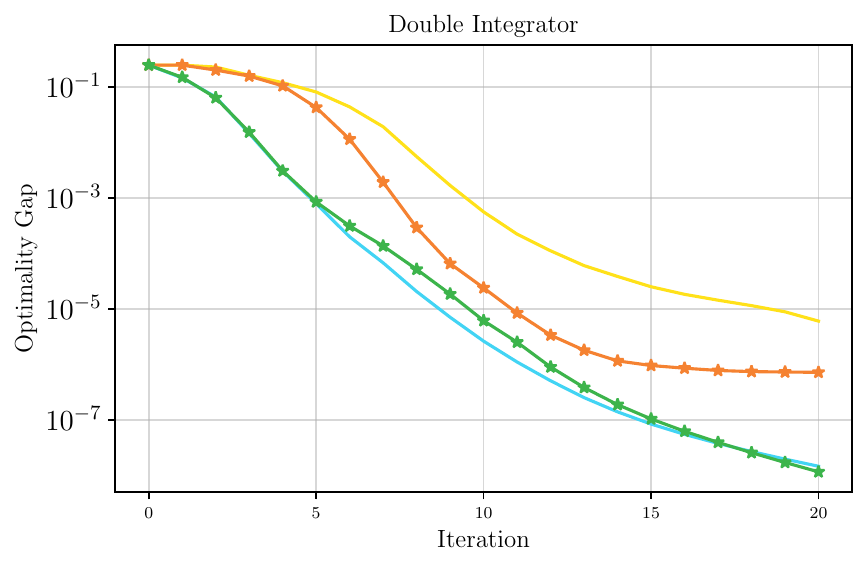}
    \end{subfigure}\\
    \begin{subfigure}[t]{0.5\textwidth}
        \centering
        \includegraphics[width=\linewidth]{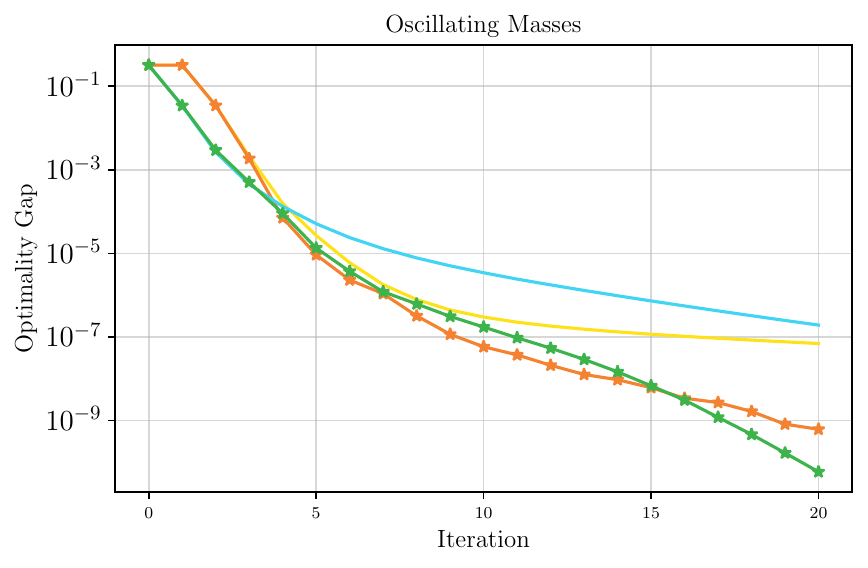}
    \end{subfigure}
\caption{Comparison of MLP vs. LSTM policies. Performance is averaged over 1000 test problems.}
\end{figure}

\clearpage
\section{Ablation Analysis on the Use of Lagrange Multipliers in the Supervised Loss}
\label{appendix:ablation_lagr_mult_loss}

In \cref{fig:lagr_loss_comparison}, we compare the performance of Deep FlexQP on different problem classes using the optimality gap loss from \cref{eq:opt_gap_loss} and our proposed loss \cref{eq:new_supervised_loss}. It is evident that our loss including the Lagrange multipliers outperforms the optimality gap loss in all cases, except for the oscillating masses problem class. This could be simply due to the fact that the performance is already approaching $10^{-10}$ at 20 iterations, and so small numerical differences play a bigger role. The overall increase in performance using the new loss can be explained by a stronger gradient signal given to Deep FlexQP to learn policies that ensure $\mu_I \geq \norm{y_I^*}_\infty$ and $\mu_E\geq \norm{y_E^*}_\infty$.

Surprisingly, however, the performance using both losses remains comparable. The ability of the optimality gap loss to perform nearly as well as the Lagrange multiplier loss likely stems from the coupling of $\mu_I$ with $\sigma_s, \rho_I$ and that of $\mu_E$ with $\rho_E$ in the Deep FlexQP architecture. That is, even with a weaker gradient signal from the optimality gap loss, the respective networks are able to learn a shared representation that allows effective learning of the penalty parameters $\mu$ as well.

\begin{figure}[h]
\centering
\begin{subfigure}[t]{0.5\textwidth}
        \centering
        \includegraphics[width=\linewidth]{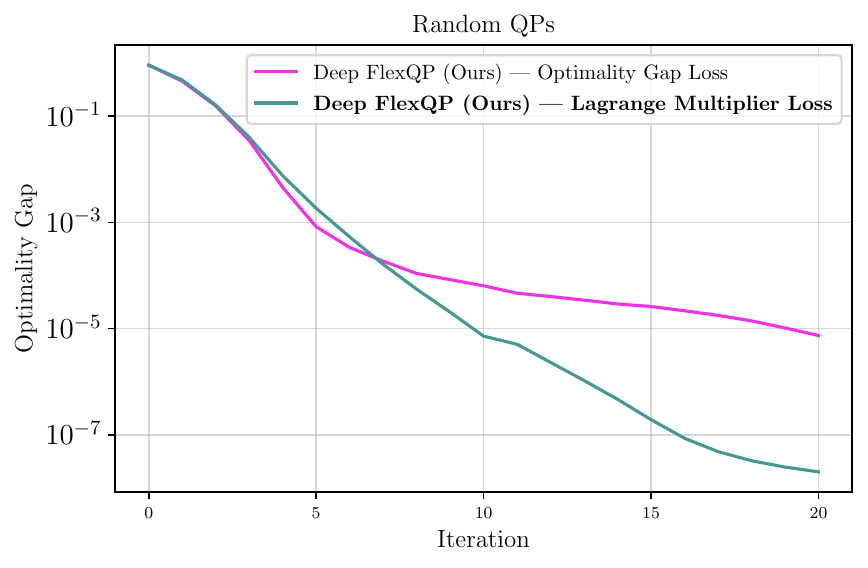}
    \end{subfigure}%
    ~
    \begin{subfigure}[t]{0.5\textwidth}
        \centering
        \includegraphics[width=\linewidth]{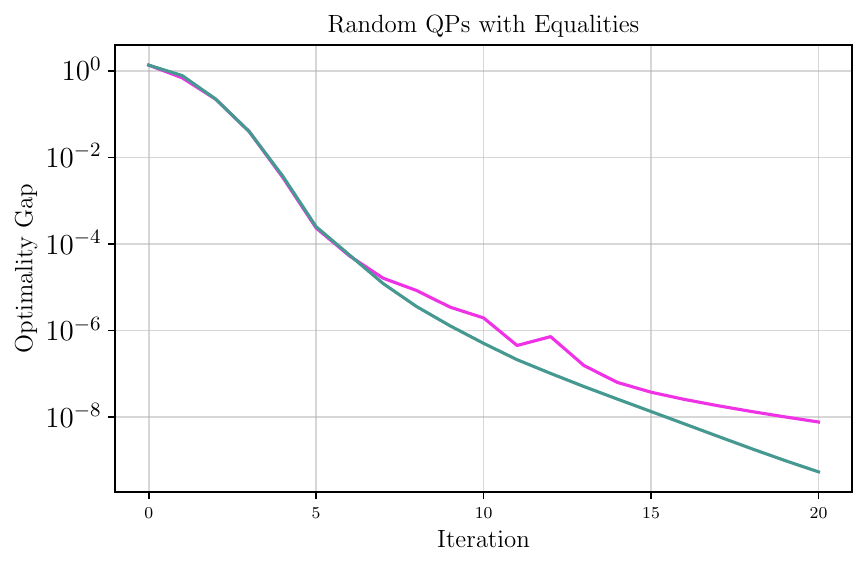}
    \end{subfigure}\\
    \begin{subfigure}[t]{0.5\textwidth}
        \centering
        \includegraphics[width=\linewidth]{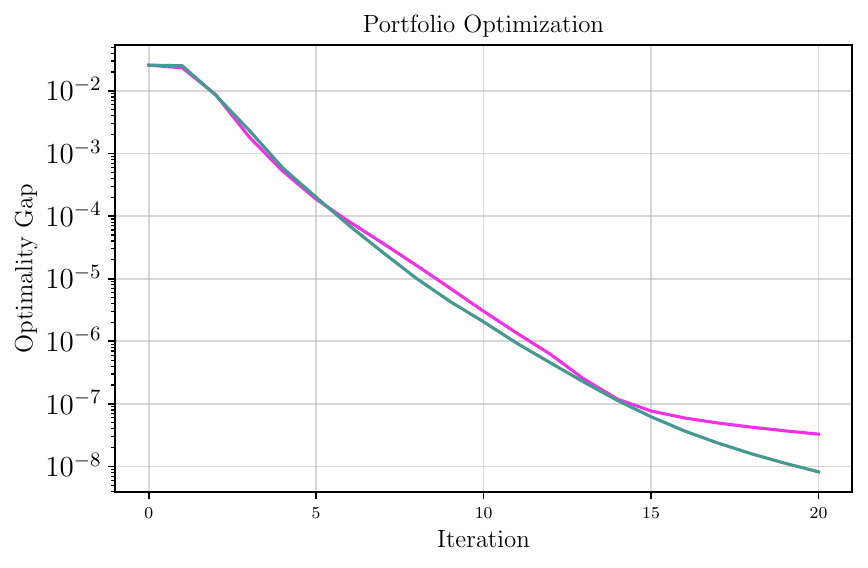}
    \end{subfigure}%
    ~
    \begin{subfigure}[t]{0.5\textwidth}
        \centering
        \includegraphics[width=\linewidth]{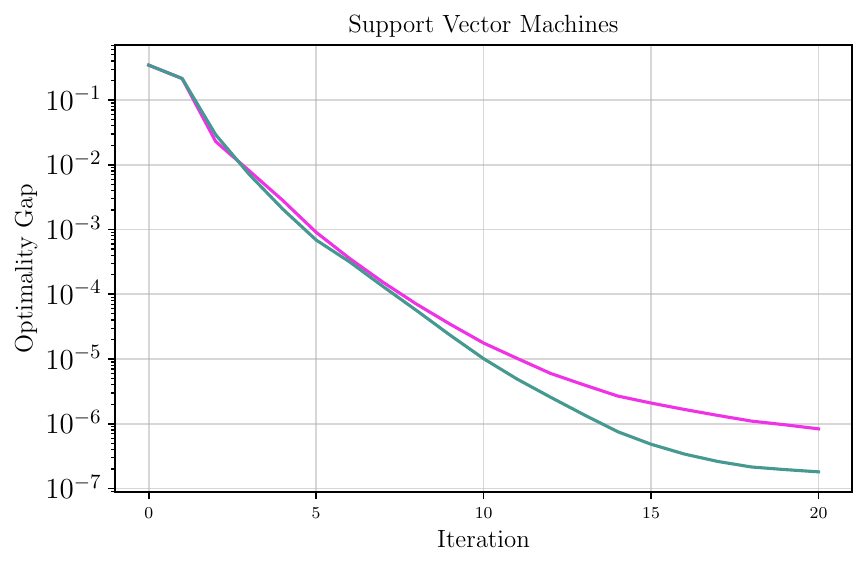}
    \end{subfigure}\\
    \begin{subfigure}[t]{0.5\textwidth}
        \centering
        \includegraphics[width=\linewidth]{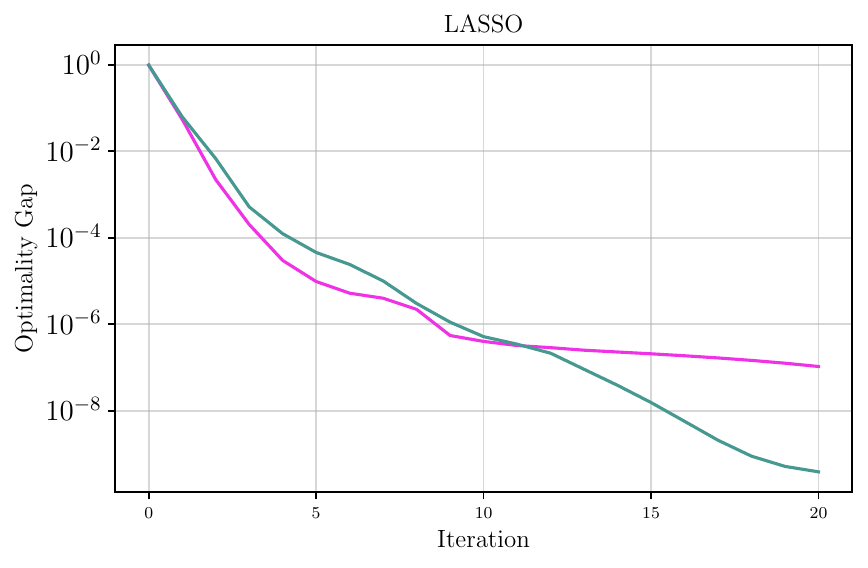}
    \end{subfigure}%
    ~
    \begin{subfigure}[t]{0.5\textwidth}
        \centering
        \includegraphics[width=\linewidth]{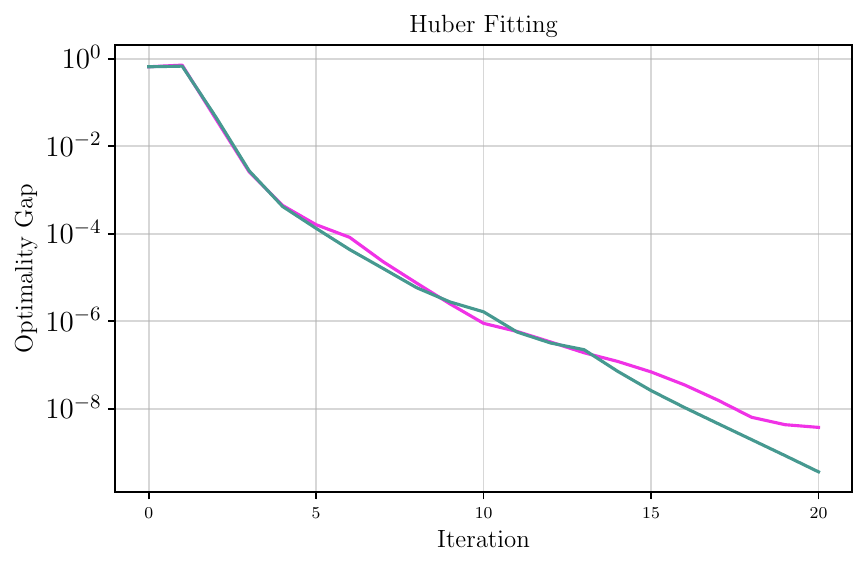}
    \end{subfigure}
\caption{Comparison on using the optimality gap vs. Lagrange multiplier loss for training Deep FlexQP. Performance is averaged over 1000 test problems.}
\label{fig:lagr_loss_comparison}
\end{figure}

\begin{figure}[h]\ContinuedFloat
    \centering
    \begin{subfigure}[t]{0.5\textwidth}
        \centering
        \includegraphics[width=\linewidth]{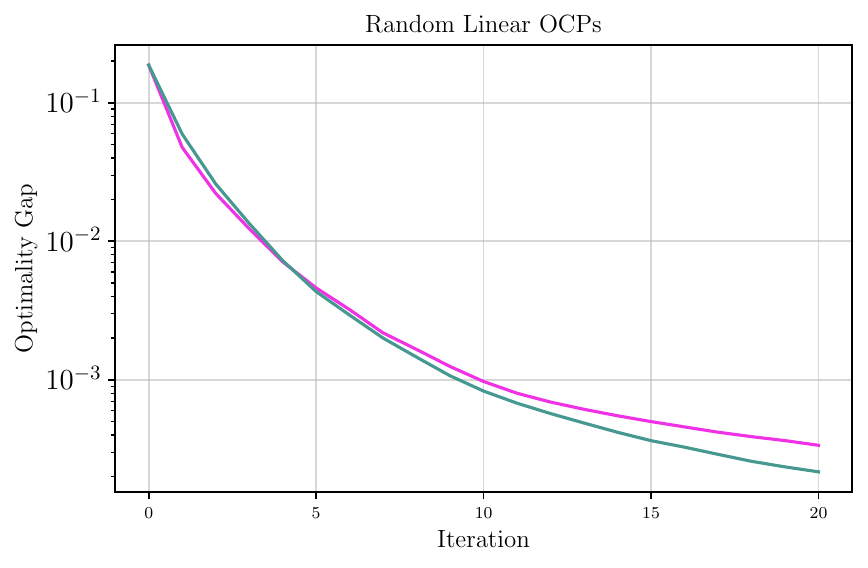}
    \end{subfigure}%
    ~
    \begin{subfigure}[t]{0.5\textwidth}
        \centering
        \includegraphics[width=\linewidth]{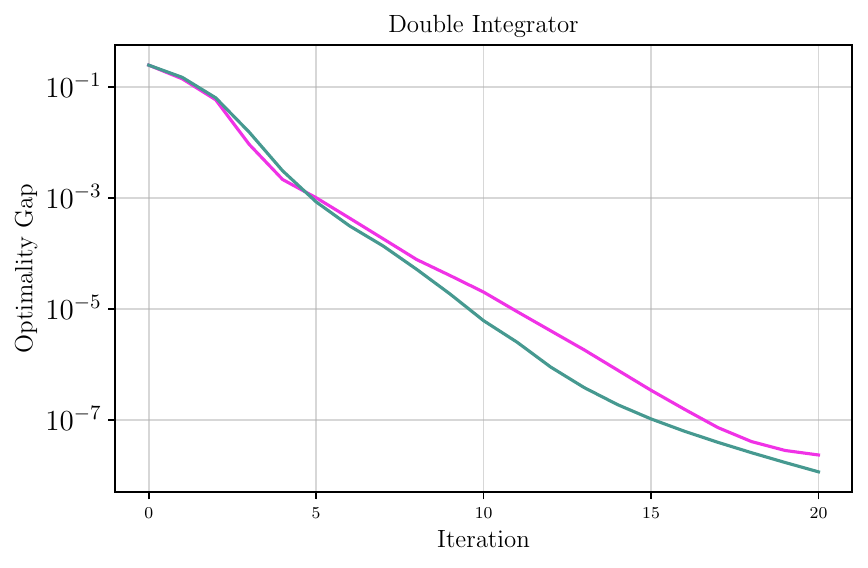}
    \end{subfigure}\\
    \begin{subfigure}[t]{0.5\textwidth}
        \centering
        \includegraphics[width=\linewidth]{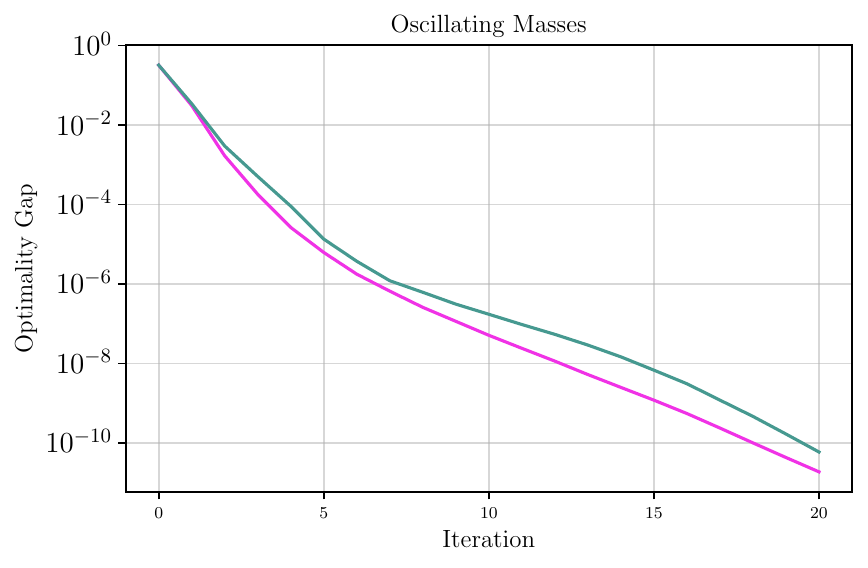}
    \end{subfigure}
\caption{Comparison on using the optimality gap vs. Lagrange multiplier loss for training Deep FlexQP. Performance is averaged over 1000 test problems.}
\end{figure}

\end{document}